\documentclass[11pt,reqno]{amsart}
\usepackage{amsmath,amsthm,amssymb,amsfonts,enumerate,color,esint, graphicx, pgf,tikz,comment,hyperref}
\usepackage[utf8]{inputenc}
\usepackage{appendix}

\usepackage{xcolor}
\usetikzlibrary{calc}

\usepackage{fullpage} 

\usepackage{cite} 

\newcommand{\R}{\mathbb{R}}
\newcommand{\Rn}{\mathbb{R}^n}

\newcommand{\K}{\mathcal{K}}

\newcommand{\A}{\mathcal{A}}
\newcommand{\I}{\mathcal{I}}
\newcommand{\J}{\mathcal{J}}
\renewcommand{\H}{\mathcal{H}}
\renewcommand{\S}{\mathcal{S}}

\newcommand{\tr}{\operatorname{tr}}

\definecolor{darkgreen}{RGB}{0,100,0}

\newtheorem{thm}{Theorem}[section]
\newtheorem{prop}[thm]{Proposition}
\newtheorem{cor}[thm]{Corollary}
\newtheorem{lem}[thm]{Lemma}
\theoremstyle{definition}
\newtheorem{defn}[thm]{Definition}
\newtheorem{rem}[thm]{Remark}

\numberwithin{equation}{section}

\title{A geometric approach to nonlocal 2-Hessian equations}

\author[F. Charro]{Fernando Charro}
\address{Fernando Charro. Department of Mathematics, Wayne State University, 656 W. Kirby, Detroit, MI 48202, USA}
\email{fcharro@wayne.edu}
\author[M. Gonz\'alez]{Mar\'ia del Mar Gonz\'alez}
\address{Mar\'ia del Mar Gonz\'alez. Departamento de Matem\'aticas\\
 Universidad Aut\'onoma de Madrid\\
Campus de Cantoblanco, 28049 Madrid, Spain}
 \email{mariamar.gonzalezn@uam.es}
\author[M.~Soria-Carro]{Mar\'ia Soria-Carro}
\address{Mar\'ia Soria-Carro. Departamento de Matem\'aticas\\
 Universidad Aut\'onoma de Madrid\\
Campus de Cantoblanco, 28049 Madrid, Spain}
 \email{maria.soriacarro@uam.es}

\keywords{Nonlinear elliptic equations, integro-differential equations, viscosity solutions, \mbox{$k$-Hessian} equations, uniform ellipticity.
\\
\indent 2020 {\it Mathematics Subject Classification:}
Primary: 35J60, 
47G20; 
Secondary: 
35D40, 
49L20, 	
53C21. 
}
\date{}

\begin{document}

\begin{abstract}
We study a nonlocal 2-Hessian equation, given by an infimum of linear deformations of the fractional Laplacian, that is,
$
\inf_{A\in \mathcal{A}_2}\Delta^s (u\circ A)(A^{-1}x)
$.
 We characterize the class of coefficient matrices $\mathcal{A}_2$, which determines the behavior of the operator, and provide a detailed geometric description of the possible degeneracies.
Our main theorem shows that the nonlocal 2-Hessian equation remains uniformly elliptic for strictly positive right-hand sides, which leads to regularity estimates.  
The results hold under weaker hypotheses than those previously considered in the literature, and for the full range $s\in(0,1)$. In particular, no convexity on the solutions is required. Our hypotheses can be interpreted as nonlocal counterparts of the local notions of  semiconcavity and 
2-convexity.
Moreover, all the results are stable as $s\to1$, recovering the local case.
The geometric methods developed here are new, even in the local setting, and may be relevant to a broader class of nonlocal fully nonlinear equations and curvature-type problems.

\end{abstract}

\maketitle

\section{Introduction}\label{sec:intro}

Many geometric problems, particularly those involving curvature of manifolds, are governed by nonlinear PDEs that are elliptic only within specific admissible classes of functions. 
 A classical example is the Monge-Amp\`ere equation $\det(D^2u)=f,$ which arises in the celebrated Minkowski problem of prescribed Gaussian curvature, and is elliptic precisely in the class of convex functions, see \cite{Figalli2017,Gutierrez}. 
The different classical notions of curvature for hypersurfaces are usually related in one way or another to symmetric polynomials in the principal curvatures, i.e.,
\[
\sigma_k(\kappa_1,\dots,\kappa_n)=\sum_{1\leq i_1<\cdots<i_k\leq n}\kappa_{i_1}\kappa_{i_2}\dots\kappa_{i_k},
\qquad k=1,2,\ldots,n.
\]
Of special interest are the mean curvature ($k=1$), scalar curvature ($k=2$), and Gaussian curvature ($k=n$). In particular, the intermediate Christoffel-Minkowski problem generalizes the problem of finding a convex hypersurface with prescribed Gauss curvature on its outer normal, to prescribed $\sigma_k$ curvature (see the survey \cite{Guan-Ma} and the references therein).

When a hypersurface is given by a graph $z=u(x)$, and upon removing the tangent plane, these principal curvatures can be seen as the eigenvalues of the Hessian matrix $D^2u$. This leads naturally to the so-called $k$-Hessian equations
\begin{equation}\label{k.hessian.local}
\sigma_k\big(\lambda(D^2u)\big)=f, \qquad k=1,2,\ldots,n,
\end{equation}
where $\lambda(D^2u)=\big(\lambda_1(D^2u),\ldots,\lambda_n(D^2u)\big)$ denotes the eigenvalues of $D^2u$, counted with multiplicity. The $k$-Hessian operators are (degenerate) elliptic in the cone 
\begin{equation}\label{k-cone}
\Gamma_k=\Big\{\lambda\in\R^n: \sigma_j(\lambda)> 0\ \text{for all}\ j=1,\dots,k \Big\},
\end{equation}
which introduces a natural geometric restriction for the solutions of \eqref{k.hessian.local}, i.e., that $D^2u\in~\Gamma_k$ (in the sense that $\lambda(D^2u)\in \Gamma_k$), also known as \emph{$k$-convexity} of the solutions. When $u$ is $k$-convex, we have $\sigma_k\big(\lambda(D^2u)\big)\geq 0$. Hence, in the study of $k$-Hessian equations, one always assumes that $f\geq 0$. 
If, in addition, $f\geq \eta_0>0$, then the equation \eqref{k.hessian.local} becomes uniformly elliptic, and classical solutions are obtained by proving an a priori $C^{2,\alpha}$ estimate in order to apply the method of continuity. Some of the most influential works are \cite{Ca-Ni-Sp,Ca-Ni-Sp2,Ivochkina,Trudinger,TW1,TW2,TW3}, although the literature is extensive; see also the survey \cite{Wang-survey} and the references therein.

Over the past years, intense research efforts have focused on establishing nonlocal and fractional counterparts of the notion of curvature, see for instance \cite{Abatangelo-Valdinoci,[Caffarelli et al. 2010],[Chang and Gonzalez 2011]}, although the theory is still under development.
A natural step is to consider nonlocal analogues of the $k$-Hessian operators in \eqref{k.hessian.local}.

Given the various applications of the Monge-Amp\`ere operator, $\det(D^2u)$, in geometry and analysis (e.g., Gaussian curvature, affine geometry, and optimal transport, see \cite{Figalli2017,Gutierrez}), the lack of a nonlocal counterpart was a significant gap in the field of nonlocal PDEs that \cite{Caffarelli-Charro,Caffarelli-Silvestre-16,Guillen-Schwab,Jhaveri-Stinga,Maldonado.Stinga,Stinga-Vaughan} contributed to bridge. 
The motivation of these nonlocal Monge-Amp\`ere operators comes from the following characterization of the determinant, which can be regarded as a matrix version of the arithmetic-geometric mean inequality: For every $n\times n$ symmetric matrix $M\geq0$, we have
\begin{equation*}
n\left(\det{M}\right)^{1/n}=\inf_{\det A=1} \tr(A^tMA),
\end{equation*}
see for instance \cite{BCMR}.
Considering the left polar decomposition of $A$, one can assume that the matrices $A$ are symmetric and positive definite. Based on this observation, we will always assume the matrices $A$ to be symmetric.
In particular, for any convex $C^2$-function~$u$, 
\begin{equation}\label{inf.M.A.local}
n\, \sigma_n^{1/n}\big(\lambda(D^2u)\big) = n\,(\det D^2u )^{1/n}=\inf_{\det A=1}\mathcal L_{A}u,
\end{equation}
where $$\mathcal L_Au=\tr(AD^2u\,A)=\Delta (u\circ A)(A^{-1}x).$$
Here, with a slight abuse of notation, we write $A$ for the linear mapping $x\mapsto Ax$.

Observe that $\sigma_n^{1/n}\big(\lambda(D^2u)\big)$ is a concave operator in the matrix argument, and \eqref{inf.M.A.local} can be seen as a linear envelope.
Then, \cite{Caffarelli-Charro} naturally defines the fractional Monge-Amp\`ere operator of order $s\in(0,1)$ as a linear envelope of the nonlocal counterparts of the $\mathcal{L}_A$, i.e.,
\begin{equation}\label{fractional.MA}
\mathcal{D}_s u(x)=\inf_{\det A=1}\mathcal L_{A}^s u(x)
\end{equation}
for 
\begin{equation} \label{eq:fraclinop}
\mathcal L_A^s u(x) =\Delta^s (u\circ A)(A^{-1}x)
=C_{n,s} \text{P.V.}\int_{\mathbb{R}^n}\frac{u(x+Ay)-u(x)}{|y|^{n+2s}}\,dy.
\end{equation}
Here, $\Delta^s$ is the fractional Laplacian (with the sign convention as in \eqref{defi-frac-lap}), and 
 \begin{equation} \label{eq:ctfraclapl}
 C_{n,s}:= \frac{4^{s} s \Gamma \big( \frac{n+2s}{2}\big)}{\pi^\frac{n}{2}\Gamma(1-s)}
 \end{equation}
is a normalization constant. 

Notice that the operator $\mathcal{D}_s$ is a degenerate Bellman operator since the condition $\det A=1$ allows the matrix $A$ to have arbitrarily small eigenvalues (this is easily seen in dimension 2, where the two eigenvalues of the matrix $A$ must be reciprocal); for a general reference on Bellman equations, see \cite{Krylov}.
In \cite{Caffarelli-Charro}, the authors identify conditions on $u$ under which the operator $\mathcal{D}_s u(x)$ remains strictly elliptic, which allows them to apply regularity results for uniformly elliptic operators such as the Evans-Krylov estimates (see \cite{Caffarelli.Silvestre,Caffarelli.Silvestre2}) and conclude that solutions are classical. Moreover, upon an appropriate normalization, $\mathcal{D}_s u(x)$ converges to the classical Monge-Amp\`ere operator as the order of the nonlocal equation approaches the local case, i.e., ~as ~$s\to1$.

\medskip
Our goal in this article is to develop a similar program for the $2$-Hessian equation from a geometric perspective. That is, we aim to define a nonlocal $2$-Hessian operator and study its degeneracy and natural solution class, under which we finally establish regularity results along the lines of \cite{Caffarelli-Charro}.
We seek to establish a foundation that can be used to define general nonlocal $k$-Hessian equations in the future.

This is a nontrivial task, and several approaches have been made in the literature. We cite the PhD thesis \cite{Wu:thesis} and the unpublished paper \cite{Wu:paper} for a possible definition of the nonlocal 2-Hessian operator. Finally, there is also the work \cite{Caffarelli-SC}, where they study a different family of nonlocal operators interpolating the fractional Laplacian and Monge-Amp\`ere operator. Nevertheless, the operators in this family do not share the same geometric structure expected of $k$-Hessian operators.\medskip

In the sequel, we write the local 2-Hessian operator in the form $\sigma_2^{1/2}\big(\lambda(D^2u)\big)$, or, simply, $\sigma_2^{1/2}(D^2u)$.
In this way, the operator is concave and homogeneous of degree 1, and one can hope to describe it as a linear envelope similarly to \eqref{inf.M.A.local}, by identifying an appropriate class of matrices $\mathcal{A}_2$ to replace $\{\det{A}=1\}$. 
Then, one can extend the definition to the nonlocal case as in \eqref{fractional.MA}.

In \cite[Lemma 1.4]{BCMR}, the authors characterize precisely this set $\mathcal A_2$, which allows them to~write
\begin{equation}\label{2-Hessian}
 2\, \sigma_2^{1/2}(D^2 u) = \inf_{A\in \A_2} \mathcal L_A u = \inf_{A\in \A_2} \tr (A D^2 u A)
\end{equation}
whenever $u$ is such that $D^2u\in\overline\Gamma_2$, using the notation in \eqref{k-cone}.
In particular,
\begin{equation*}
 \A_2 := \big\{ A \in \S^n_+: \lambda_i(A)^2 = \sigma_{1,i}(\zeta) \ \text{ with } \ \zeta\in \Gamma_2 \ \text{ and } \ \sigma_2(\zeta)=1 \big\}, 
\end{equation*}
where $\mathcal S_+^n$ is the set of symmetric positive definite matrices, $\zeta=(\zeta_1,\hdots, \zeta_n) \in \Rn$,
$$
\sigma_1(\zeta) = \sum_{i=1}^n \zeta_i \quad \text{ and } \quad
\sigma_2(\zeta)= \sum_{1 \leq i<j \leq n} \zeta_i \zeta_j,
$$
 and $\sigma_{1,i}(\zeta)=\sigma_1(\zeta_1, \hdots,\zeta_{i-1},0,\zeta_{i+1},\hdots, \zeta_n)$.
For the representation of general Hessian operators as infima over coefficient sets, see also \cite{Krylov-TAMS}.

Observe that $2$-convex functions are precisely those for which the infimum in \eqref{2-Hessian} is finite (and, in turn, nonnegative). In the literature, this is sometimes called \emph{admissibility} \cite{BCMR}.

\subsection{Main result}

Let $n\geq 3$. For $s\in (0,1)$, we consider a nonlocal 2-Hessian operator given by
\begin{equation}\label{frac-2-hessian}
 \K^s u(x) := \inf_{A\in \A_2} \mathcal L_A^s u(x) = C_{n,s} \inf_{A\in \A_2}  \text{P.V.}\int_{\Rn} \frac{u(x+Ay)-u(x)}{|y|^{n+2s}}\, dy, 
\end{equation}
where $C_{n,s}$ is the normalizing constant defined in \eqref{eq:ctfraclapl}.
To control the tails of the operator, we introduce the usual weight $\omega_s(y):=\big(1+|y|^{n+2s}\big)^{-1}$, and say that $u\in L^1_{\omega_s}(\Rn)$ if 
$$
\| u\|_{L^1_{\omega_s}(\Rn)} := \int_{\Rn} \frac{|u(y)|}{1+|y|^{n+2s}}\, dy<\infty.
$$
For $u\in C(\Rn)\cap L^1_{\omega_s}(\Rn)$, sub- and supersolutions of $\K^su=f$ are
well defined in the viscosity sense, and classically at any point where $u$ can be touched
 by an appropiate test function (see Definition~\ref{def:viscosity} and 
Lemma~\ref{lem:classicalsense}). 
When $s\to 1$, the operator $\K^s$ converges to the classical 2-Hessian operator \eqref{2-Hessian}. More precisely:

\begin{rem}\label{convergence.s.to.1}
 Let $s\in (0,1)$. Assume $u\in C^2(\Rn)\cap L_{\omega_s}^1(\Rn)$ is 2-convex, i.e., $D^2u\in\overline\Gamma_2$.
Then, 
 $$
 \lim_{s\to 1} \K^s u(x) = 2\, \sigma_2^{1/2}(D^2 u(x)),
 $$
 in the viscosity sense (see Definition \ref{def:viscosity}). 
 The proof is similar to the one given in \cite[Lemma~A.1]{Caffarelli-Charro}, thus we omit it here.

\end{rem}

Our main result states that the nonlocal 2-Hessian equation becomes uniformly elliptic for strictly positive right-hand sides. More precisely, we will show that the infimum cannot be attained at matrices that are too degenerate, i.e., matrices that have very small or very large eigenvalues. 

To make this precise, let us first introduce some notation.
Given a real number $\Lambda\geq 1$, we define the subset of \textit{nondegenerate (or uniformly elliptic)} positive symmetric matrices with respect to~$\Lambda$,~as
\begin{equation}\label{eq:nondeg}
\mathcal{N}(\Lambda) :=\big \{ A \in \S^n_+ : \Lambda^{-1} I \leq A \leq \Lambda I \big\}.
\end{equation}
Naturally, we can write 
$\A_2\cap\mathcal{N}(\Lambda)$ for the subset of matrices in $\A_2$ that are positive and nondegenerate with respect to $\Lambda$.

Define also, for $x, y\in \R^n$, the centered second difference 
$$
\delta u(x,y) := u(x+y)+u(x-y)-2u(x), 
$$
and the one-dimensional fractional Laplacian in the $e$-direction as
\begin{equation}\label{definition-h-s}
 \Theta^s u(x,e):= 
 (1-s)\int_{-\infty}^{\infty} \frac{\delta u(x,r e)}{|r|^{1+2s}}\, dr, \quad \text{for}\ e\in \mathbb{S}^{n-1}.
\end{equation}
The latter has appeared in several other works \cite{DelPezzo-Quaas-Rossi,Birindeli-Galise-Topp,Rossi-RuizCases}, as it allows for more general eigenvalue constructions.
 Here, we primarily consider its homogeneous extension of degree $2s$, given by
\begin{equation}\label{def.I.s.intro}
\I^s u(x,y):= (1-s)\int_{-\infty}^{\infty} \frac{\delta u(x,ry)}{|r|^{1+2s}}\,dr,
\quad\textrm{for}\ y\in \mathbb{R}^{n}.
\end{equation}
For $u\in C^2$, \eqref{definition-h-s} and \eqref{def.I.s.intro} are the nonlocal counterparts of the pure second derivative along a given direction and the Hessian quadratic form at a point $x\in\mathbb R^n$, i.e.,
$$
\Theta u(x,e):=u_{ee}(x),\quad \textrm{for}\ e\in\mathbb{S}^{n-1},
$$
and
\begin{equation}\label{def.I.local}
\I u(x,y):=\langle D^2u(x)y,y\rangle
=\left\{
\begin{aligned}
&|y|^2\,\Theta u\Big(x,\tfrac{y}{|y|}\Big),& & 
\textrm{for}\ y\in\Rn\setminus\{0\},\\
&0, &&\textrm{for}\ y=0,
\end{aligned}
\right.
\end{equation}
respectively. 
 We observe that for $u\in C^2\cap L^1_{\omega_s}$, the nonlocal objects converge to the corresponding local ones, i.e., 
\begin{equation}\label{convergence-Is}
\Theta^su(x,e)\to \Theta u(x,e) \quad  \text{ and } \quad  \I^s u(x,y)\to \I u(x,y) \quad\textrm{ as $s\to1$}.
\end{equation}
Motivated by \eqref{convergence-Is}, we refer to $\I^s u(x,\cdot)$ as the \emph{nonlocal Hessian form} of $u$ at $x$.

\medskip

Our main result is the following.

\begin{thm}[Uniform Ellipticity] \label{thm:main}
Consider a real number $s\in (0,1)$ and a domain $\Omega\subset\Rn$.
Let $u\in C(\Rn)\cap L_{\omega_s}^1(\Rn)$ be such that for some $\eta_0>0$ it holds
\begin{equation}\label{nonlocal.2.convexity}
 \K^s u(x) \geq \eta_0, \quad \text{ for } x\in \Omega,
\end{equation}
 in the viscosity sense. 
Moreover, assume there exists $K>0$ such that for every $x\in\Omega$ at which $u$ can be touched from above by a $C^2$-function it holds
\begin{equation}\tag{NSC}\label{NSC}
\mathcal Q^s u(x;y,z):=\I^s u(x,y+z)+ \I^s u(x,y-z) - 2\,\I^s u(x,y)
\leq 2K |z|^{2s}
\end{equation}
for a.e. $y,z\in\mathbb{R}^n$.
 Then, there is $\Lambda\geq 1$, depending only on $n$, $s$, $K$, and $\eta_0$, such that
\begin{equation}\label{uniform.Ks.main.thm}
 \K^s u(x) = C_{n,s} 
 \min_{A\in \A_2\cap\mathcal{N}(\Lambda)} {\rm P.V.}
 \int_{\Rn} \frac{u(x+Ay)-u(x)}{|y|^{n+2s}}\, dy,
 \end{equation}
 i.e., $\K^s$ is uniformly elliptic in $\Omega$, and the infimum is attained. Moreover, 
 $\Lambda=O(1)$ as $s\to 1$.
\end{thm}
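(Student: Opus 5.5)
Our plan is to reduce the nonlocal operator to a quadratic-form-type expression in the matrix $A$, and then run the geometric degeneracy analysis of $\A_2$ on that expression. First, at a point $x\in\Omega$ where $u$ is touched from above by a $C^2$ function (so $\mathcal L^s_A u(x)$ is classically defined, by Lemma~\ref{lem:classicalsense}), we pass to polar coordinates $y=r\theta$ in \eqref{eq:fraclinop}. Symmetrizing in $y\mapsto -y$ gives
\[
\mathcal L_A^s u(x)=\frac{C_{n,s}}{2(1-s)}\int_{\mathbb S^{n-1}}\I^s u(x,A\theta)\,d\theta .
\]
This is the exact nonlocal analogue of $\tr(AD^2uA)=\frac{n}{|\mathbb S^{n-1}|}\int_{\mathbb S^{n-1}}\langle D^2u\,A\theta,A\theta\rangle d\theta$. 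Note that $C_{n,s}/(1-s)$ stays bounded as $s\to1$, which is what will make $\Lambda=O(1)$. The viscosity case then follows by the standard argument: the inequality $\K^s u\ge\eta_0$ holds at test-function contact points, and $u$ is touched from above only on a dense set.

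Second, we extract convexity information. Set $q(y):=\I^s u(x,y)$, which is $2s$-homogeneous. Since $\K^s u(x)\ge\eta_0>0$, we have $\mathcal L^s_A u(x)\ge\eta_0$ for every $A\in\A_2$. Choosing $A$ with one eigenvalue direction $e$ very large and the others small (allowed inside $\A_2$, as we will check from the parametrization by $\zeta\in\Gamma_2$, $\sigma_2(\zeta)=1$), we aim to deduce a nonlocal $2$-convexity: for orthonormal $e_1,\dots,e_n$, suitable averages of $q$ on pairs of directions are nonnegative. Next, \eqref{NSC} says $q(y+z)+q(y-z)-2q(y)\le 2K|z|^{2s}$, a semiconcavity of $q$. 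From this we will derive the upper bound $q(y)\le C K|y|^{2s}$; for $s\le1/2$ this needs care, since $2s$-homogeneity and semiconcavity of order $2s$ interact differently than in the local case, and we will iterate \eqref{NSC} dyadically. We will also derive a lower bound on $q$ in terms of $\eta_0$ along "most" directions.

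Third, the degeneracy step. We take a minimizing sequence $A_k\in\A_2$ with $\mathcal L^s_{A_k}u(x)\to\K^s u(x)$. We bound $\K^s u(x)$ from above by testing $A=c_nI\in\A_2$, which gives $\K^s u(x)\le CK$. If $\lambda_{\max}(A_k)\to\infty$, we use the structure of $\A_2$: the relation $\lambda_i^2=\sigma_{1,i}(\zeta)$ shows that at most one eigenvalue can be small, and large eigenvalues force several large ones. The aim is to show
\[
\int_{\mathbb S^{n-1}} q(A_k\theta)\,d\theta\ \ge\ c(\eta_0)\,\lambda_{\max}(A_k)^{\beta}-CK
\]
for some $\beta>0$. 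This would contradict the upper bound, and it would also exclude small eigenvalues, because $\det$-type relations in $\A_2$ force a large eigenvalue whenever one is small. Compactness of $\A_2\cap\mathcal N(\Lambda)$ and continuity of $A\mapsto\mathcal L^s_Au(x)$ on it, by dominated convergence using \eqref{NSC} and $u\in L^1_{\omega_s}$, then give that the minimum is attained.

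The main obstacle is this lower bound. Here $q$ is not a quadratic form, so we cannot diagonalize. We must split $\theta$ according to the components of $A\theta$ along large and small eigendirections, and use \eqref{NSC} to compare $q(A\theta)$ with $q$ of the large-component part plus an error of order $K|{\rm small\ part}|^{2s}$. Then we apply the $2$-convexity lower bound on the large subspace. Keeping every constant uniform as $s\to1$ is the delicate bookkeeping.
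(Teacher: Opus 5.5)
Your outline has the right global shape: the spherical representation, the upper bound $\K^s u(x)\le c_{K,s}m_*^s$ from the vertex matrix, exclusion of degenerate $A$ via a growing lower bound, and compactness at the end. However, the step that carries the whole theorem is not supplied. That step is the lower bound $\mathcal L^s_A u(x)\gtrsim\|A\|_F^{2s}$ for degenerate $A\in\A_2$, and the mechanism you sketch for it runs the wrong way. \eqref{NSC} is a one-sided, semiconcavity-type bound. Split $A\theta=y+z$ with projections $P_L,P_S$ onto the large and small eigenspaces, and symmetrize with the reflection $\theta\mapsto(P_L-P_S)\theta$, which commutes with $A$. Then \eqref{NSC} gives
\[
\int_{\mathbb S^{n-1}} q(A\theta)\,d\theta\le\int_{\mathbb S^{n-1}} q(P_LA\theta)\,d\theta+K\int_{\mathbb S^{n-1}}|P_SA\theta|^{2s}\,d\theta ,
\]
which is an \emph{upper} bound on the full operator, not a lower one. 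A one-sided condition can only give a lower bound for $\mathcal L^s_{Q_1}u(x)$ by comparing with a \emph{larger} matrix $Q_2$, $Q_2^2>Q_1^2$, whose value is already bounded below by the equation: $\mathcal L^s_{Q_1}u(x)\ge\mathcal L^s_{Q_2}u(x)-c_{K,s}\big(\tr(Q_2^2-Q_1^2)\big)^s$. This is Proposition~\ref{prop:tech-lem}, proved via $G_{Q_2^2}=G_{Q_1^2}*G_{Q_2^2-Q_1^2}$, which also handles increments like $\epsilon I$ that do not split $Q_2\theta$ into orthogonal pieces.

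The missing idea is the geometric fact that supplies such a $Q_2$ inside a dilate of $\A_2$, uniformly in $A$. The normalized $\widetilde A=A/\|A\|_F$ has $\widetilde\mu$ within $O(\|A\|_F^{-4})$ of the normalized section of the asymptotic cone. Hence $A_\delta^2=\tau'\widetilde A^2+\delta'I\in\A_2$ with $\delta'\in[\delta/2,\delta]$ and $\tau'\sim(n-1)/\delta$, where $\delta$ depends only on $n,s,K,\eta_0$ (Corollary~\ref{cor:pert.tildeA}). Taking $Q_1=\widetilde A$ and $Q_2=A_\delta/\sqrt{\tau'}$ gives $\mathcal L^s_{\widetilde A}u(x)\ge(\eta_0-c_{K,s}n^s\delta^s)(\tau')^{-s}$, which is the key estimate, with no case analysis on which eigenvalues degenerate.

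The input you planned to feed into your comparison is also unavailable. In Step~2, matrices with one very large eigenvalue and the others small are not in $\A_2$. Lemma~\ref{lema1} gives $\lambda_i\lambda_j\ge1$ for $i\ne j$; equivalently, that pattern in $\lambda_i^2=\sigma_{1,i}(\zeta)$ forces $\sigma_2(\zeta)<0$. You use this very fact yourself in Step~3. The admissible degenerations of Corollary~\ref{cor:degeneracies}, tested directly in $\K^s u\ge\eta_0$, only yield hyperplane averages of $q$ bounded below by $c\,\eta_0\lambda^{-2s}\to0$, i.e.\ non-strict positivity. So a quantitative ``2-convexity lower bound on the large subspace'' is essentially a consequence of the key estimate, not an ingredient. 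This hyperplane-by-hyperplane route is precisely the one the introduction explains fails without convexity, because of tilted subspaces and eigenvalues diverging at different rates, where no canonical large subspace exists. Nor can you expect a pointwise lower bound on $q$ along ``most'' directions. Already for $s=1$, $D^2u=\mathrm{diag}(-M,K,\dots,K)$ with $M<(n-2)K/2$ lies in $\Gamma_2$, so $q<0$ on an open cone. In the nonlocal case, $q$ is merely in $L^1(\mathbb S^{n-1})$. Two minor points: the constant in your representation should be $C_{n,s}/(4(1-s))$, and $q(z)\le K|z|^{2s}$ follows from \eqref{NSC} with $y$ near $0$ by averaging, so no dyadic iteration is needed.
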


\begin{rem}\label{remark.I.s.in.L1}
Observe that $u\in L^1_{\omega_s}(\mathbb{R}^n)$ and \eqref{nonlocal.2.convexity} imply $\I^s u(x,\cdot)\in L^1(\mathbb{S}^{n-1})$. Hence, by homogeneity, the  $\I^s u(x,y\pm z)$ in hypothesis \eqref{NSC} are finite for a.e. $y,z\in\Rn$. 
\end{rem}

The linear operators $\mathcal{L}_A^s$, defined in \eqref{eq:fraclinop}, can be written as
\begin{equation}\label{alternative.writing.L.A1}
\mathcal{L}_A^s u(x) = {\rm P.V.} \int_{\Rn} ( u(x+y)-u(x) ) \, k_A(y)\, dy,
\end{equation}
where the kernel $k_A$ is given by
\begin{equation}\label{alternative.writing.L.A2}
k_A(y) = \frac{C_{n,s}}{\det(A)|A^{-1}y|^{n+2s}}.
\end{equation}
In particular, it holds that
\begin{equation}\label{alternative.writing.L.A3}
\frac{\lambda_{\min}(A)^{n+2s}}{\lambda_{\max}(A)^n} \cdot \frac{ C_{n,s} }{|y|^{n+2s}} \leq k_A(y) \leq \frac{\lambda_{\max}(A)^{n+2s}}{\lambda_{\min}(A)^n} \cdot \frac{ C_{n,s} }{|y|^{n+2s}}.
\end{equation}
If $A\in \A_2\cap\mathcal{N}(\Lambda)$, then $k_A$ is comparable to the kernel of the fractional Laplacian, with constants depending only on the universal parameter $\Lambda$. 
 Hence, Theorem~\ref{thm:main} implies that $\K^s$ belongs to a class of nonlocal Bellman operators, for which
 the theory of nonlocal uniformly elliptic operators is available. 
As a consequence of Theorem \ref{thm:main}, the nonlocal analogues of the Krylov-Safonov theorem, and the Evans-Krylov theorem, we obtain the following regularity~result (see, for instance, the seminal works \cite{Caffarelli.Silvestre,Caffarelli.Silvestre2}, the monograph~\cite{FernandezReal-RosOton}, and the references therein).

\begin{cor}[Interior regularity]
 Given $s\in (0,1),$ and $K,\eta_0>0$, let $\Lambda\geq 1$ be as in Theorem~\ref{thm:main}.
 Then, there exists $\alpha_0>0$, depending only on $n$, $s$, and $\Lambda$, such that: 
 For all $\alpha \in (0, \alpha_0)$ and $f\in C^\theta(\Rn)$, with $\theta=\max\{1-2s,0\}+\alpha$, satisfying 
 $$f\geq \eta_0>0\quad \text{in } B_1(0),$$ 
 if $u\in C(\Rn)\cap L_{\omega_s}^1(\Rn)$
 satisfies \eqref{NSC}, and 
 $$
 \K^s u = f \quad \text{ in } B_1(0),
 $$
 in the viscosity sense, then there is $C>0$, depending only on $n$, $s$, $\alpha,$ and $\Lambda$, such that
 $$
 \|u\|_{C^{\beta+\alpha}(B_{1/2}(0))} \leq C \big( \|u\|_{L^1_{\omega_s}(\Rn)} + \|f\|_{C^{\theta}(\Rn)}\big),
 $$
 where $\beta=\max\{1,2s\}$. 
\end{cor}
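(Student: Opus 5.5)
The plan is to reduce the corollary to the general regularity theory for nonlocal concave uniformly elliptic Bellman operators, with Theorem~\ref{thm:main} supplying the uniform ellipticity. Nothing new should be needed beyond checking that the hypotheses of the known estimates hold, with constants depending only on $n,s,\alpha,\Lambda$.

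\textbf{Step 1: reduce to a uniformly elliptic Bellman equation.} Since $f\geq \eta_0$ in $B_1(0)$, the function $u$ satisfies $\K^s u\geq \eta_0$ there in the viscosity sense. It also satisfies \eqref{NSC} by hypothesis. Theorem~\ref{thm:main}, applied with $\Omega=B_1(0)$, then gives $\Lambda=\Lambda(n,s,K,\eta_0)$ such that
\[
\K^s u(x)=\min_{A\in\A_2\cap\mathcal N(\Lambda)}\mathcal L^s_A u(x)
\qquad\text{in } B_1(0).
\]
So $u$ is a viscosity solution of $\widetilde{\K}^s u=f$ in $B_1(0)$, where
\[
\widetilde{\K}^s v:=\inf_{A\in\A_2\cap\mathcal N(\Lambda)}\mathcal L^s_A v .
\]
By \eqref{alternative.writing.L.A1}--\eqref{alternative.writing.L.A3}, each kernel $k_A$ is even and comparable to $C_{n,s}|y|^{-n-2s}$, with constants $\Lambda^{-2n-2s}$ and $\Lambda^{2n+2s}$. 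Hence $\widetilde{\K}^s$ is a concave, translation invariant inf-operator over a family of symmetric kernels. This family is uniformly elliptic with respect to the class $\mathcal L_0$ of Caffarelli--Silvestre, with ellipticity constants depending only on $n,s,\Lambda$.

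\textbf{Step 2: check that viscosity solutions are compatible.} One point needs care: the viscosity inequalities for $\K^s$ must transfer to $\widetilde{\K}^s$. At a test function $\varphi$ touching $u$ from above or below at $x$, I would use Lemma~\ref{lem:classicalsense} to evaluate both operators classically on the glued function. For the supersolution inequality, $\widetilde{\K}^s\geq\K^s$ always holds, so it passes directly. For the subsolution inequality, I would argue as in the proof of Theorem~\ref{thm:main}: the infimum is attained on $\mathcal N(\Lambda)$ at such touching points. Alternatively, I would state the equivalence in $B_1(0)$ as given by the Theorem itself.

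\textbf{Step 3: apply the known estimates.} The nonlocal Krylov--Safonov theory gives $C^{\alpha}$ estimates. The nonlocal Evans--Krylov theorem for concave operators \cite{Caffarelli.Silvestre2}, in the form with $C^\theta$ right-hand side and $L^1_{\omega_s}$ tails found in \cite{FernandezReal-RosOton}, then yields
\[
\|u\|_{C^{2s+\alpha}(B_{1/2})}\leq C\big(\|u\|_{L^1_{\omega_s}}+\|f\|_{C^{\theta}}\big)
\]
for $\alpha<\alpha_0(n,s,\Lambda)$. When $2s<1$ one needs $\beta=1$, i.e. a $C^{1+\alpha}$ estimate, which requires $f\in C^{1-2s+\alpha}$; this explains $\theta=\max\{1-2s,0\}+\alpha$. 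The sup norm of $u$ on $B_1$ appearing in these estimates is controlled by $\|u\|_{L^1_{\omega_s}}$ and $\|f\|_{L^\infty}$ via the half Harnack / local boundedness estimates for the class $\mathcal L_0$. A covering and rescaling argument moves the estimate from $B_1$ to $B_{1/2}$.

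\textbf{Main obstacle.} I expect the delicate point to be matching the exact hypotheses of the cited Evans--Krylov result, in particular the weighted $L^1$ tail in place of $L^\infty$ and the low-$s$ regime $\beta=1$. For these I would invoke the version in \cite{FernandezReal-RosOton} directly, treating $\widetilde{\K}^s$ as an infimum of operators whose kernels are homogeneous and comparable to the fractional Laplacian kernel.
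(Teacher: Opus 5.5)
Your overall route is the paper's. There the corollary is stated as a direct consequence of Theorem~\ref{thm:main}, which makes $\K^s$ a concave Bellman operator whose kernels $k_A$, for $A\in\A_2\cap\mathcal N(\Lambda)$, are comparable to $C_{n,s}|y|^{-n-2s}$ by \eqref{alternative.writing.L.A3}. This is combined with the nonlocal Krylov--Safonov and Evans--Krylov theory of \cite{Caffarelli.Silvestre,Caffarelli.Silvestre2,FernandezReal-RosOton}. The paper gives no further argument, so your Steps 1 and 3 match it.

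Step 2, however, is wrong as written: you have the two viscosity inequalities reversed. Since $\widetilde{\K}^s v\geq \K^s v$ (an infimum over a smaller set), it is the \emph{subsolution} inequality that transfers for free. If $\psi$ touches $u$ from above at $x$, then $\widetilde{\K}^s v(x)\geq\K^s v(x)\geq f(x)$, with no need to invoke the proof of Theorem~\ref{thm:main}.

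The \emph{supersolution} inequality is the nontrivial one. From $\K^s v(x)\leq f(x)$ at a point touched from below you cannot conclude $\widetilde{\K}^s v(x)\leq f(x)$, because the near-minimizing matrices for $v$ may degenerate. Arguing ``as in the proof of Theorem~\ref{thm:main}'' does not work at such points. That proof uses two inputs: the classical bound $\mathcal L^s_Au(x)\geq\eta_0$ for all $A\in\A_2$ (via Lemma~\ref{lem:classicalsense}), and \eqref{NSC}. Both are available only where $u$ is touched from \emph{above}. The glued function $v\leq u$ only satisfies $\mathcal L^s_Av(x)\leq\mathcal L^s_Au(x)$, which is the wrong direction for a lower bound of the type in Theorem~\ref{thm:keyest}.

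So your Step 2 does not show that $u$ is a viscosity supersolution of $\widetilde{\K}^s u=f$. To close the argument, use your ``alternatively'' option: take from the statement of Theorem~\ref{thm:main} that $\K^s$ is uniformly elliptic in $B_1(0)$, in the form the regularity theory requires. The paper relies on exactly this and does not carry out a separate verification. A self-contained check would need an extra argument for the supersolution side at points where $u$ is touched only from below.
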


\subsection{Discussion of the hypotheses}\label{subsection.discussion.hyp}

A few comments regarding Theorem \ref{thm:main} are in order. First of all, the hypotheses of the theorem force a compatibility condition among the data of the form
\begin{equation}\label{compatibility-intro}
 \eta_0\leq\K^s u(x)\leq c(n,s) K,
 \end{equation}
 which is stable as~$s\to1$ (see Lemma \ref{lemma:compatibility} for the exact constants and proof).

The fundamental assumption here is \eqref{NSC}. It can be regarded as a nonlocal semiconcavity condition (the exponent $2s$ on the right-hand side is dictated by the $2s$-homogeneity of $\I^su(x,\cdot)$). This is because for a function $u\in C^2\cap L^1_{\omega_s}$, the inequality \eqref{NSC} converges as $s\to 1$ to
\begin{equation}\label{intro.eq.local.1}
\I u(x,y+z)+ \I u(x,y-z) - 2\,\I u(x,y) \leq 2K |z|^{2},\qquad\textrm{for all $y,z\in\mathbb{R}^n$},
\end{equation}
see \eqref{convergence-Is}. After simplification, this amounts to the classical (local) semiconcavity condition 
 \begin{equation}\label{intro.eq.local.2}
 \I u(x,z)=\langle D^2u(x)z,z\rangle\leq K|z|^2\quad\textrm{for all $z\in\mathbb{R}^n$,}
 \end{equation}
 which can be formulated also as $\delta u (x,y)\leq K|y|^2$, for all $x$ and all sufficiently small $y\in \Rn$.

Semiconcavity, along with Lipschitz continuity and \mbox{$s>1/2$} (to control the growth at infinity), has been typically assumed in previous uniform ellipticity results in the literature, see \cite{Caffarelli-Charro,Gan-Jiao,Wu:paper}.
In contrast, \eqref{NSC} is a weaker condition, which, as we will show below, allows us to consider the case $s\leq 1/2$, never considered before.  

One could seek to extend the analogy with the local case, noting that conditions \eqref{intro.eq.local.1} and \eqref{intro.eq.local.2} are equivalent for $s=1$, and ask whether   condition \eqref{NSC} could be weakened to $\I^s u(x,z)\le K|z|^{2s}$, or, equivalently, 
 \begin{equation}\label{weaker-theta}
 \Theta^s u(x,e)\le K.
 \end{equation}  However, we will show in Section~\ref{sec:counterexample} that the conclusion of Theorem~\ref{thm:main} may fail under this weaker assumption. The reason is that $\mathcal Q^s u(x;\cdot,\cdot)$ couples the values of $\I^su(x,\cdot)$ across different directions, whereas the bound \eqref{weaker-theta} controls them only individually. The transversal information provided in \eqref{NSC} is essential in the proof, see~\eqref{lem:comp.eq1}. Further discussion on nonlocal semiconcavity will be continued in Section~\ref{sec:NSC.discussion}.

Nevertheless, \eqref{NSC} may be difficult to verify directly for a given function, since it is formulated for the integral quantities $\I^s u$ rather than $u$ itself. We will show in Lemma \ref{lemma:implication-semiconcavity} that condition \eqref{NSC} holds whenever $u$ satisfies the following semiconcavity with a modulus condition (see \cite{CS}),
 \begin{equation} \label{eq:onesidedreg} 
 \delta u(x,y)=u(x+y)+u(x-y)-2u(x) \leq m(|y|), \quad \text{ for all } x,y \in \Rn,
\end{equation}
for some modulus of continuity $m: [0,\infty)\to[0,\infty)$ with
\begin{equation} \label{eq:hyp.on.m}
2(1-s) \int_0^\infty \frac{m(r)}{r^{1+2s}}\,dr\leq K.
\end{equation}
In particular, upon adjusting the constant, \eqref{NSC}  holds for globally semiconcave and Lipschitz functions when $s>1/2$, with 
$m(r)=\min\{2Lr,\,Kr^2\}$ (where $L$ is the Lipschitz constant, which controls integrability at infinity), which is the setting of 
\cite{Caffarelli-Charro,Gan-Jiao,Wu:paper}; for $s\leq 1/2$, one may take instead $m(r)=\min \{L r^{2s-\kappa}, Kr^2\}$, for some $\kappa\in(0,2s)$.
Note that classical semiconcavity alone corresponds to $m(r)=Kr^2$, which fails to satisfy the integrability condition \eqref{eq:hyp.on.m} at infinity.

\medskip

On another note, hypothesis \eqref{nonlocal.2.convexity} can be regarded as a nonlocal counterpart of the classical (local) strict 2-convexity, and suggests a way to define nonlocal $k$-convexity.
A fractional notion of convexity (the case $k=n$) has been proposed in \cite{DelPezzo-Quaas-Rossi}, but
for intermediate values of $k$, it is not clear what the correct nonlocal analogue of $k$-convexity should be. 
Whenever $D^2u(x)\in\overline\Gamma_2$, we have the representation formula
\begin{equation}\label{rep.formula.local}
2\,\sigma_2^{1/2}\big(D^2u(x)\big)
=\frac{n}{\omega_{n-1}}\inf_{A\in\A_2}\int_{\mathbb S^{n-1}}\I u(x,Ae)\,d\H^{n-1}_e,
\end{equation}
see \eqref{2-Hessian} and \eqref{representation.local}.
Therefore, the function $u$ is 2-convex if and only if 
\[
\inf_{A\in\A_2}\int_{\mathbb S^{n-1}}\I u(x,Ae)\,d\H^{n-1}_e \geq0.
\]
This motivates a notion of nonlocal 2-convexity through the operator $\K^s$. Namely, a function $u$ is nonlocal 2-convex whenever it satisfies
\[
\K^s u(x) = 
\frac{C_{n,s}}{4(1-s)}\,
\inf_{A\in \A_2}\int_{\mathbb S^{n-1}}\I^su\big(x,\,A e \big)\,d\H^{n-1}_ e
\geq0.
\]
In particular, a function $u$ under the hypothesis of Theorem \ref{thm:main} is strictly nonlocal 2-convex and nonlocal semiconcave.

\medskip

Lastly, let us briefly comment on the notion of solution. In the local theory of $k$-Hessian equations, the emphasis has traditionally been on classical solutions \cite{Ca-Ni-Sp2,Ivochkina,Trudinger}, and subsequently on weak solutions. The notion of weak solution is a delicate issue, since any such notion must incorporate the admissibility of solutions, given that the operator $\sigma_k$ is elliptic only on the cone $\Gamma_k$.
 This has led to several frameworks, such as viscosity solutions with admissible ($k$-convex) test functions \cite{Trudinger-90,Urbas}, and the theory of Hessian measures \cite{TW1,TW2,TW3}; see also \cite{Trudinger-weak} and the survey~\cite{Wang-survey}.

In the nonlocal setting, viscosity solutions in the sense of Caffarelli--Silvestre~\cite{Caffarelli.Silvestre} constitute the natural
framework, see for instance \cite{FernandezReal-RosOton}. Indeed, it is well known that at any point where $u$ is touched from above by a $C^2$-function, the equation can be evaluated classically (see Lemma~\ref{lem:classicalsense} below), which makes working with viscosity and classical solutions somewhat similar.
In contrast with the local case, no admissibility restriction on test functions is needed in the nonlocal setting.
The nonlocal operators in this article and in \cite{Caffarelli-Charro,Caffarelli-Silvestre-16,Wu:thesis,Wu:paper} are defined as infima of
well-defined linear operators, and admissibility with respect to the class of coefficients is given by the equation itself through the finiteness of the infimum.

\subsection{Challenges} \label{sec:challenges}
Theorem~\ref{thm:main} is inspired by the nondegeneracy results by Caffarelli and Charro \cite{Caffarelli-Charro} for the fractional Monge-Amp\`ere operator $\mathcal D_s$ defined in \eqref{fractional.MA}. However, there is an important difference: 
For the Monge-Amp\`ere case, the determinant constraint $\det{A}=1$ allows more freedom in how the eigenvalues compensate for degeneracy in any one direction.
Degeneracy is essentially one-dimensional, and once one eigenvalue is small, the rest are free to be roughly equal.
On the other hand, the geometry of the set $\mathcal A_2$ is more complex, and this is a core point in our arguments.
 Degeneracy for $2\leq k<n$ involves multiple directions interacting nontrivially, and eigenvalues can degenerate at different rates that have to be tracked simultaneously. 
 This has a practical implication for eigenvalue estimates;
 it is well-known that for the Monge-Amp\`ere operator, lower and upper estimates are easily related through the structure of the equation. This is no longer the case for the 2-Hessian equation, where lower and upper bounds have to be treated ~independently.

Another technical challenge is that convexity is often assumed in the literature, even though it is not the natural hypothesis for $\sigma_k$ as it largely bypasses the central difficulty. Accordingly, in Theorem~\ref{thm:main} we do not require $u$ to be convex. 
Note that this is a common feature of several other results in fully nonlinear analysis, which are more readily established under a classical convexity hypothesis, rather than just $k$-convexity. An example of this is the aforementioned Christoffel-Minkowski problem, where the crucial $C^2$ estimate follows by assuming convexity, see \cite{Guan-Ren-Wang}, which can be relaxed to $k$-convexity only when $k=2$.
A related question where convexity simplifies the analysis is in the derivation of geometric inequalities for quermassintegrals. With considerable effort, Aleksandrov-Fenchel/isoperimetric inequalities have been obtained with just a $(k+1)$-convexity hypothesis using optimal transport methods \cite{Chang-Wang,Chang-Wang:isoperimetric}. Nevertheless, this assumption cannot be removed in their work.

Strict ellipticity for nonlocal $k$-Hessian operators when $k\geq2$ has been previously attempted in \cite{Wu:thesis,Wu:paper} ($k=2$) and \cite{Gan-Jiao} ($k\geq2$, under a convexity assumption).
Roughly speaking, these works adapt the strategy developed in \cite{Caffarelli-Charro}
for the nonlocal Monge-Amp\`ere operator ($k=n$), which proceeds in three steps: 
\begin{itemize}
 \item[\emph{(i)}] Establish strict positivity and boundedness of appropriate lower-dimensional fractional Laplacians of the solution (in \cite{Caffarelli-Charro}, one-dimensional fractional Laplacians); 
 \item[\emph{(ii)}] Decompose the $n$-dimensional integral in the definition of the nonlocal $k$-Hessian operator in terms of the lower-dimensional fractional Laplacians from step \emph{(i)} (in \cite{Caffarelli-Charro} this is done using polar coordinates); 
 \item[\emph{(iii)}] Combine the previous steps to show that degenerate coefficient matrices produce arbitrarily large $n$-dimensional integrals that do not contribute towards the infimum in the nonlocal $k$-Hessian.
 \end{itemize}

As it turns out, for the $2$-Hessian operator, the natural objects in step \emph{(i)} are fractional Laplacians restricted to hyperplanes, or $(n-1)$-dimensional subspaces.
If one is to follow the above program closely, one can replace the standard polar coordinates, used in step \emph{(ii)} in the Monge-Amp\`ere case, by an integral decomposition where hyperplanes take the role of lines, see for instance \cite[Formula~3.3]{Solmon} or \cite[Theorem~3.6]{IGT}.
 This introduces a new major difficulty since ``tilted" subspaces require more precise estimates to control their contribution to the total integral. This approach seems to  fail unless one assumes convexity.

In this paper, we introduce a novel approach that, rather than looking at each separate eigenvalue, involves a global geometric description that allows us to prove step~\emph{(iii)} directly, and bypass steps \emph{(i)} and \emph{(ii)} altogether; see Section~\ref{sec:keyest}. A key tool is a new representation formula for the fractional 2-Hessian operator, introduced in Lemma~\ref{lem:newrep} below. 

 Next, we present a more detailed overview of the ideas involved in the proof.

\subsection{Strategy of the proof of Theorem \ref{thm:main}.}
To establish the uniform ellipticity, we need to prove that the degenerate matrices do not contribute to the infimum in the definition of the nonlocal 2-Hessian operator \eqref{frac-2-hessian}. 
In the local setting (i.e., for $\sigma_2(D^2 u)$), the proof hinges on the following estimate, proved in Section~\ref{sec:idealocal},
\begin{equation}\label{key-estimate-intro}
\mathcal{L}_A u(x)=\tr(A D^2 u(x) A) \geq \gamma_0^{\rm loc} \tr(A^2), \quad \text{ for all } A\in \A_2, 
\end{equation}
for all $x\in \Omega$, and for some constant $\gamma_0^{\rm loc}>0$.
The trace on the right-hand side of \eqref{key-estimate-intro} is the square of the Frobenius norm of the matrix $A$, i.e.,
\[
\|A\|_F:=\sqrt{\tr(A^2)},
\]
a natural way to measure the size of coefficient matrices in this context.

We also observe that, under a strict convexity assumption $D^2u(x)\ge \gamma_0^{\rm loc} I$, estimate \eqref{key-estimate-intro} follows trivially. 
In our case, the main difficulty is that we only know that $D^2u(x)$ belongs to the cone $\Gamma_2$, so it could have negative eigenvalues.
Nevertheless, the possible degeneracies of the matrices in $\mathcal A_2$ are rigid enough that the contribution of the negative directions is compensated by the remaining~ones.

Thus, we need to understand the geometry of the set $\mathcal A_2$ and characterize its degeneracies.
The proof follows by identifying the set $\mathcal A_2$ as a hypersurface in $ \mathbb R^n$. For this, let us
denote the eigenvalues of 
$A$ and $A^2$, written in increasing order, by 
$$\lambda(A)=(\lambda_1,\hdots,\lambda_n)\quad \text{ and } \quad \mu(A):= \lambda(A^2)=(\mu_1,\hdots, \mu_n),$$
respectively, where $\mu_i=\lambda_i^2$, $i=1,\ldots,n$. Lemma \ref{lem:char} states that a matrix $A\in \S^n_+$ belongs to the class $\A_2$ if and only if 
 \begin{equation} \label{intro.eq1.hyperboloid}
 \sigma_2(\mu) - \frac{n-2}{2(n-1)}\sigma_1(\mu)^2=1.
 \end{equation} 
This is a two-sheet hyperboloid in the variable $\mu\in\mathbb R^n$, which is symmetric about the axis $\{ t (1, \hdots, 1) : t\in \R\}$, and with vertex at $t_*(1,\ldots,1)$ with $t_*$ defined in \eqref{definicion-t*}, see Figure \ref{fig:hyperboloid}. We define $A_*$ to be the ``vertex'' matrix, i.e, 
\begin{equation}\label{vertex-matrix-intro}
A_*:=\sqrt{t_*}I.
\end{equation}
\begin{figure}[t]
 \centering
 \includegraphics[width=0.6\linewidth]{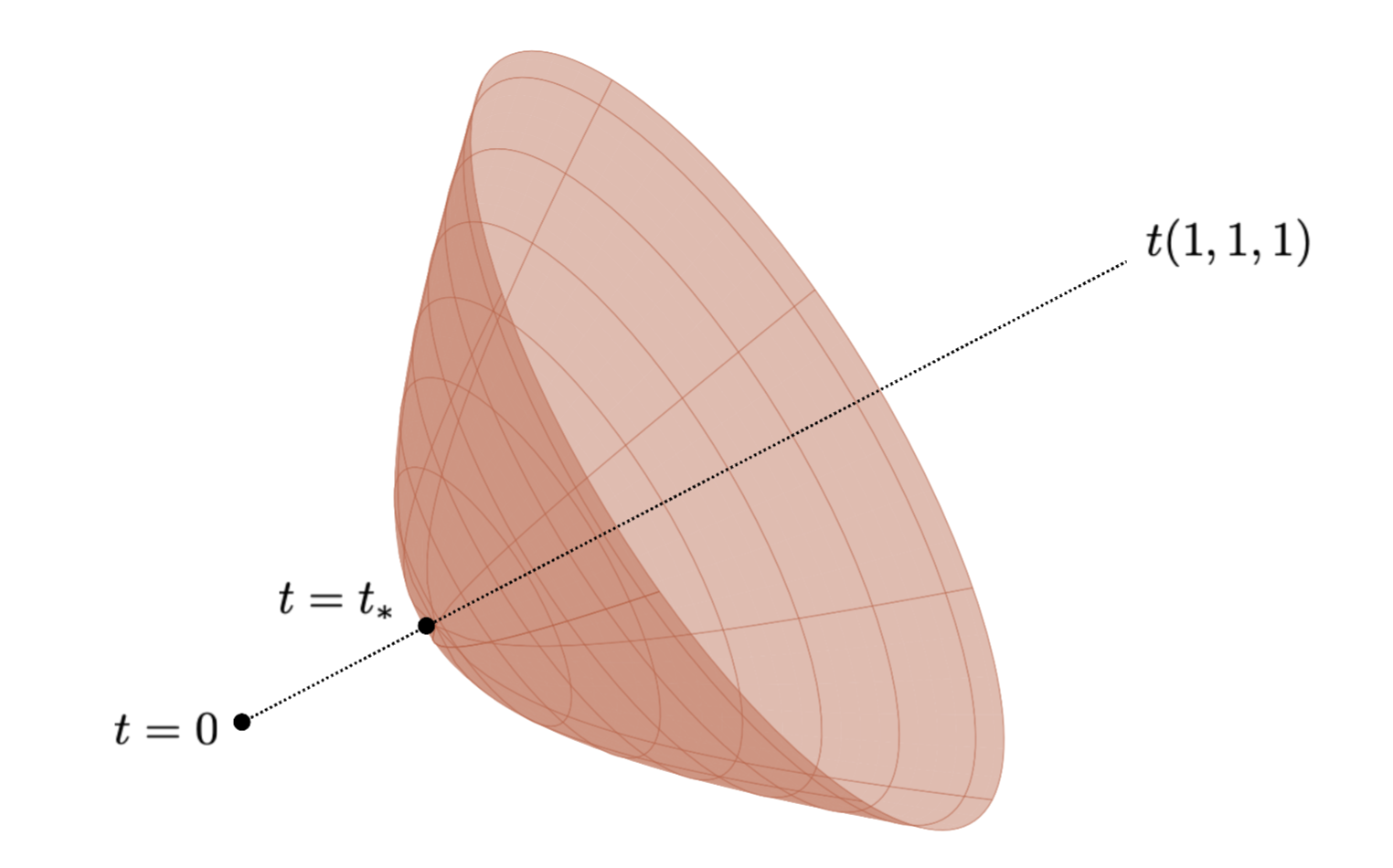}
 \caption{Positive sheet of the hyperboloid \eqref{intro.eq1.hyperboloid} with vertex at $t_*$.}
 \label{fig:hyperboloid}
 \end{figure}
 We also set
\begin{equation}\label{intro-m*}
m_*:=\|A_*\|_F^2=\sqrt{2n(n-1)}.
\end{equation}

 To describe the possible degeneracies for matrices $A$ in $\mathcal A_2$, observe that $\|A\|_F\geq \sqrt{m_*}$, and normalize
$$
\widetilde A :=\frac{1}{\|A\|_F}A.
$$
We denote by $\widetilde\mu=(\widetilde \mu_1, \ldots, \widetilde \mu_n)$ the eigenvalues of $\widetilde A^2$.
Then, the limit eigenvalues $\overline \mu=(\overline \mu_1,\ldots,\overline \mu_n)$ arising as $\|A\|_F\to\infty$ satisfy 
\begin{equation*} 
\frac{1}{n-1}\sigma_1(\overline \mu)^2 = |\overline\mu|^2,
\end{equation*}
which is a cone asymptotic to the hyperboloid \eqref{intro.eq1.hyperboloid} defined by $\mathcal A_2$.

First note that, if we look at a normal section of the hyperboloid, then $\widetilde \mu$ belongs to an $(n-2)$-dimensional sphere inside this normal section, whose distance to the normal section of the cone is controlled by the Frobenius norm $\|A\|_F$, see Figure \ref{fig:projection} and Lemma \ref{lem:sections}.

Unfortunately, $\widetilde A$ does not belong to the class $\mathcal A_2$, so it is not a valid test matrix in the infimum \eqref{frac-2-hessian}. Thus, we construct a perturbed matrix $A_\delta\in \mathcal A_2$ that belongs to a smooth arc on the hyperboloid, joining $A$ to the vertex $A_{*}$ (see Figure~\ref{fig.lateral.section}). Observe that this construction does not depend on which particular eigenvalue degenerates, as it is based on the norm $\|A\|_F$ alone.

Looking at the precise formula for $A_\delta$ from its definition \eqref{def.tau.delta}, we observe that it has two components: one in the cone, and one along the axis. Equivalently, one may use the formulation in \eqref{def.B.delta.tildeA} to decompose $A_{\delta}^2=\tau'\,\widetilde{A}^2+\delta'\, I$.

Now, in the local setting, the trace operator is linear, and one is able to separate the axial component from the one along the asymptotic cone, yielding
\begin{equation}\label{splitting-intro}
\begin{split}
\eta_0\leq\tr\big(A_{\delta}D^2u(x)A_{\delta}\big)
&
=\tau'\tr\big(\widetilde{A}D^2u(x)\widetilde{A}\big)+\delta'\Delta u(x).
\end{split} 
\end{equation}
To handle the Laplacian term above, one can use semiconcavity with constant $K$. Then, if $\delta'$ is small enough, we can get a bound from below for $\tr(\widetilde A D ^2 u(x)\widetilde A)$, from where we immediately obtain the key estimate \eqref{key-estimate-intro} by homogeneity.

The next step is to find the nonlocal analogue of \eqref{key-estimate-intro}. We observe that the trace operator in \eqref{key-estimate-intro} can be replaced by the average of second-order directional derivatives  on the sphere (see Lemma \ref{lem:average}). In particular, for $\I u$ as in \eqref{def.I.local}, it holds that
\begin{align*}
\frac{1}{n} \tr(A^t D^2 u(x) A) = \fint_{\mathbb{S}^{n-1}} \I u(x,Ae) \,d\H^{n-1}_e,
\end{align*}
see \eqref{representation.local} below.
In the nonlocal setting, one also has a similar representation formula for the nonlocal Hessian form \eqref{def.I.s.intro} (see Lemma~\ref{lem:newrep}).
We aim to prove that
\begin{equation}\label{intro.key.est.nonlocal}
\mathcal{L}_A^s u(x)= \frac{C_{n,s}}{4(1-s)} 
\int_{\mathbb{S}^{n-1}} \I^s u (x, Ae)\, d\H^{n-1}_e
\geq \gamma_0 \|A\|_F^{2s},
\end{equation}
for all $A\in \A_2,$ which replaces the local estimate \eqref{key-estimate-intro}, and 
is the key step (see Theorem \ref{thm:keyest}).

In this process, we need to control the $\delta' I$ term in $A_\delta$, which was quite straightforward in the local case \eqref{splitting-intro} due to the linearity of the trace in the coefficient matrix. In the nonlocal case, no such linearity property holds. Instead, we develop a novel comparison estimate that provides quantitative control of the difference between $\mathcal L^s_{\widetilde A}$ and the perturbed $\mathcal L^s_{A_\delta}$. More precisely, we will show in Proposition \ref{prop:tech-lem} that for general matrices $Q_1,Q_2\in\mathcal S^n_+$ such that $Q_2^2> Q_1^2$, it holds
\[
 \mathcal{L}_{Q_2}^su(x)- \mathcal{L}_{Q_1}^su(x) \leq c_{K,s} \left(\tr(Q_2^2-Q_1^2)\right)^s,
\]
where 
\begin{equation}\label{defi-C-0.intro}
c_{K,s}:=K \cdot\frac{\omega_{n-1}}{4 n^s}\cdot\frac{C_{n,s}}{1-s},
\end{equation}
with $c_{K,s}\to K$ as $s\to 1$.
Finally, we show that all our results are stable under the limit $s\to 1$, recovering the exact local results.

To the best of our knowledge, the above strategy is new, even in the local case. Our arguments, in particular, yield the existence of a constant $\Lambda(n,K,\eta_0)\geq 1$ such that
\begin{equation} \label{eq:localUE2}
 2\sigma_2(D^2 u(x) )^\frac{1}{2}= \min_{A\in \A_2\cap\mathcal{N}(\Lambda)} \tr(A D^2 u(x) A)
 \end{equation}
(see Section~\ref{sec:idealocal} below on the local case). Nevertheless, in the local case, this characterization
can also be obtained by classical linearization arguments, since $2\sigma_2^{1/2}$ is concave and homogeneous of degree one on
$\Gamma_2$.
In this way, the uniform ellipticity~\eqref{eq:localUE2} is equivalent to two-sided bounds on the linearization coefficients $\sigma_{1,i}(\lambda)$.
Estimates of this type for admissible solutions are classical in the $k$-Hessian literature, see \cite{Garding,Ca-Ni-Sp2,Trudinger,Wang-survey}. 
In this paper, we work directly with the Bellman representation~\eqref{2-Hessian} and the coefficient set $\A_2$, which is what ultimately allows the extension to the nonlocal setting (where no linearization is possible), and provides estimates that are preserved in the limit from nonlocal to local.

\subsection{Further remarks and open problems} \label{sec:openprob}

As stated in subsection~\ref{subsection.discussion.hyp}, condition~\eqref{NSC} can be seen as a nonlocal semiconcavity condition.
Naturally, the notion may also suggest defining a nonlocal convex function as one that satisfies
\begin{equation}\label{NCX}
\mathcal Q^s u(x;y,z) \geq 0, \qquad\textrm{$y,z\in\mathbb{R}^n$},
\end{equation}
whenever $s>1/2$, which, to the best of our knowledge, has not been considered in the literature.
There is, however, the related notion of fractional convexity of \cite{DelPezzo-Quaas-Rossi}, which is defined through the one-dimensional fractional Laplacians,  a
line-by-line condition where \eqref{NCX} is replaced by 
\begin{equation}\label{FCX}
\Theta^su(x,e)\geq0,\qquad e\in\mathbb{S}^{n-1}.
\end{equation}
This is the nonlocal counterpart of $\lambda_{\min}(D^2u)\geq0$, which, in the local case, is equivalent to~the~convexity of the quadratic form $\I u(x,\cdot)$, since a quadratic form is convex if and only if it is nonnegative.

\medskip
Once conditions for uniform ellipticity for the nonlocal 2-Hessian operator are established, a natural question to consider is the existence and uniqueness of solutions.
The nonlocal Monge-Amp\`ere and Hessian literature (see for instance \cite{Caffarelli-Charro,Caffarelli-Silvestre-16,Jhaveri-Stinga,Wu:thesis,Wu:paper}) has extensively developed a ``case study'' problem in all of $\mathbb{R}^n$ where solutions approach a given profile at infinity, which allows focusing on the behavior of the newly defined nonlocal operators and deferring the technical issues associated with boundary data.
In our context, the corresponding problem reads
\begin{equation}\label{test.problem}
\left\{
\begin{aligned}
& \K^s u=u-\phi&&\textrm{in\ }\mathbb{R}^n,\\
& u-\phi\to0 &&\textrm{as}\ |x|\to\infty,
 \end{aligned}
 \right.
\end{equation}
for a given smooth, strictly convex function $\phi : \mathbb{R}^n\to \mathbb{R}$ asymptotic to a cone, which forces $1/2<s<1$ for the operator to be well defined.
 Although we will not provide the details, one could prove the existence and uniqueness of solutions to \eqref{test.problem} closely following \cite{Caffarelli-Charro}.
 More specifically, both the uniqueness of solutions and existence via the Perron method follow from the comparison principle argument in \cite{Caffarelli-Charro}, which applies to general degenerate Bellman equations.
Existence for problems on bounded domains with a prescribed right-hand side $f$, and for the full range $s\in(0,1)$, is a nontrivial challenge (it remains largely open even for the nonlocal Monge-Amp\`ere operator), and the geometry of the boundary is expected to play a major role, as in the local case; see \cite{Ca-Ni-Sp2} for a discussion of domain admissibility.

\medskip
Finally, a further interesting open problem is to establish the uniform ellipticity of $k$-Hessian equations for $k\geq 3$. Strict ellipticity was proved in \cite{Gan-Jiao}, under the assumption that solutions are convex, which, as mentioned in Section~\ref{sec:challenges}, is not the natural admissibility condition for these equations. The techniques developed here appear to extend to the case $k\geq 3$ and allow us to remove the convexity assumption. We are currently investigating this extension in an ongoing~work.

\subsection{Organization of the paper}
The rest of the paper is organized as follows. In Section~\ref{sec:prelim}, we introduce the notation and some preliminary results. 
In Section~\ref{sec:A2class}, we characterize the class $\A_2$, and study the possible degeneracies, see Lemma~\ref{lem:char} and Corollary~\ref{cor:degeneracies}, respectively.
In Section~\ref{sec:idealocal}, we discuss our novel geometric approach to establish the uniform ellipticity for the local 2-Hessian equation. 
Section~\ref{app:tech-lem} is devoted to proving a comparison between nonlocal operators (not to be confused with a comparison principle) that will be used to extend the local proof to the nonlocal case.
Section~\ref{sec:keyest} is devoted to the nonlocal counterpart of said geometric approach; we prove the key estimate \eqref{intro.key.est.nonlocal}, see Theorem~\ref{thm:keyest}, and our main result, Theorem~\ref{thm:main}. 
Sections \ref{sec:NSC.discussion} and \ref{sec:counterexample} provide further insights on the nonlocal semiconcavity notion \eqref{NSC}, and a counterexample that 
justifies the need of such an assumption. Appendix \ref{appendix} states some probability results used in Section~\ref{sec:counterexample}.

\medskip

\section{Preliminaries}\label{sec:prelim}

In this section, we introduce the notation and recall some definitions and basic results.

\subsection{Notation}
Throughout the paper, $\S^n$ denotes the space of symmetric $n\times n$ real matrices, and $\S^n_+\subset\S^n$ the cone of symmetric positive 
definite matrices. Inequalities between symmetric matrices are understood in the sense that $A\leq B$ whenever $B-A\geq0$. Given $A\in\S^n$, we write 
$\lambda(A)=(\lambda_1,\dots,\lambda_n)$ for its eigenvalues, counted with multiplicity and arranged in increasing order, and, when $A\in\S^n_+$, we 
denote $\mu(A)=\lambda(A^2)$, so that $\mu_i=\lambda_i^2$ for $i=1,\ldots,n$. We denote by $\|A\|_F:=\sqrt{\tr(A^2)}$ the Frobenius 
norm of $A$, and by $\|A\|=\max_i|\lambda_i(A)|$ its spectral norm, which for symmetric matrices coincides with the operator norm induced by 
the Euclidean norm. For $\lambda\in\Rn$, we write $\sigma_k(\lambda)$ for the $k$-th elementary symmetric polynomial, and, with a slight abuse of notation, $\sigma_k(A):=\sigma_k(\lambda(A))$ for $A\in\S^n$.

We denote by $\mathbb S^{n-1}:=\{x\in\Rn:|x|=1\}$ the unit sphere of $\Rn$. By $\H^{n-1}$, we denote the $(n-1)$-dimensional Hausdorff measure, and 
indicate the integration variable as a subscript. The letters $e$ and $\theta$ are reserved for unit vectors on $\mathbb{S}^{n-1}$. We set $\omega_{n-1}:=\H^{n-1}(\mathbb S^{n-1})$, the surface measure of the unit sphere, not to be confused with the weight $\omega_s(y)=(1+|y|^{n+2s})^{-1}$ from the definition of $L^1_{\omega_s}(\Rn)$. As usual, $B_r(x)\subset\R^n$ denotes the open ball of radius $r>0$ centered at~$x\in\Rn$.

\subsection{The fractional Laplacian}
For $x, y\in \R^n$, we define the centered second difference of $u$ as
$$
\delta u(x,y) := u(x+y)+u(x-y)-2u(x). 
$$
We recall the weight $\omega_s(y):=\big(1+|y|^{n+2s}\big)^{-1}$, and say that $u\in L^1_{\omega_s}(\Rn)$ if 
$$
\| u\|_{L^1_{\omega_s}(\Rn)} := \int_{\Rn} \frac{|u(y)|}{1+|y|^{n+2s}}\, dy<\infty.
$$
Moreover, we recall the definition of the fractional Laplacian of $u\in C(\Rn)\cap L^1_{\omega_s}(\Rn)$ given by
\begin{equation}\label{defi-frac-lap}
\Delta^s u(x) := C_{n,s}\, {\rm P.V.} \int_{\Rn} \frac{u(y)-u(x)}{|y-x|^{n+2s}}\, dy
= \frac{C_{n,s}}{2} \int_{\Rn} \frac{\delta u (x,y)}{|y|^{n+2s}}\, dy,
\end{equation}
with $C_{n,s}$ defined in \eqref{eq:ctfraclapl}.
Strictly speaking, our definition of the fractional Laplacian defines $-(-\Delta)^s$, but for notational simplicity, we will omit the minus signs.

In the sequel, we will use the asymptotic behavior of the normalizing constant $C_{n,s}$ defined in~\eqref{eq:ctfraclapl}.
We have
\begin{equation}\label{eq:Cns.limit}
\lim_{s\to1}\frac{C_{n,s}}{1-s}=\frac{4\,\Gamma\big(\tfrac n2+1\big)}{\pi^{n/2}}=\frac{4n}{\omega_{n-1}}.
\end{equation}
 In particular, $C_{n,s}=O(1-s)$, as $s\to1$.

\subsection{Viscosity solutions} 
 We recall from \cite{Caffarelli.Silvestre} the notion of  viscosity solutions. 

\begin{defn} \label{def:viscosity}
Let $s\in (0,1)$, let $\Omega$ be an open set in $\Rn$, and let $f\in C(\Omega)$.
A function $u \in L^1_{\omega_s}(\Rn)$, upper (resp.~lower) semicontinuous in $ \Omega$, is a viscosity subsolution (resp.~supersolution) to $\K^su = f$ in $\Omega$, and we write 
$$\K^su\geq f \quad \text{in } \Omega \qquad (\textrm{resp. }\K^s u \leq f \ \text{ in } \Omega),$$ 
if every time all the following happen,
\begin{enumerate}[\it (i)]
\item $x$ is a point in $\Omega$,
\item $N$ is an open neighborhood of $x$ in $\Omega$,
\item $\psi$ is some $C^2$-function in $N$,
\item $\psi(x) = u(x)$,
\item $\psi(y) > u(y)$ (resp.~$\psi(y) < u(y)$) for every $y \in N \setminus \{x\}$,
\end{enumerate}
 and if we let \[ v := \begin{cases}
 \psi &\text{in } N \\
	 u &\text{in } \mathbb{R}^n \setminus N,
 \end{cases} \]
then we have $\K^sv(x) \geq f(x)$ (resp. $\K^sv(x) \leq f(x)$).
We say that $u\in C(\Rn)\cap L^1_{\omega_s}(\Rn)$ is a viscosity solution to $\K^su = f$ in $\Omega$ if it is both a viscosity subsolution and supersolution.
\end{defn}

\begin{rem}
    Given $u$ as in Definition \ref{def:viscosity}, whenever there exists $\psi$ satisfying \emph{(i)--(v)},  we say that $u$ can be \emph{touched from 
above} by a $C^2$-function at $x$ (resp. from below).
\end{rem}

The following lemma states that $\K^s u$ can be evaluated classically at those points where $u$ can be touched by a $C^2$-function;
see \cite[Lemma 3.3]{Caffarelli.Silvestre}.

\begin{lem}\label{lem:classicalsense}
Let $u \in L^1_{\omega_s}(\Rn)$ be upper semicontinuous in $ \Omega$.
If $u$ satisfies $\K^su\geq f$ in $\Omega$, in the viscosity sense, and $\psi\in C^2(N)$ touches $u$ from above at $x$, then 
$\K^su(x)\geq f(x)$ is defined in the classical sense.
The analogous statement holds for viscosity supersolutions, i.e., $\K^su\leq f$ in $\Omega$.
\end{lem}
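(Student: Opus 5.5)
The plan is to follow the standard argument of Caffarelli--Silvestre \cite[Lemma~3.3]{Caffarelli.Silvestre}, adapted to the infimum structure of $\K^s$. Fix $x\in\Omega$ and a $C^2$-function $\psi$ on a neighborhood $N$ of $x$ touching $u$ from above at $x$. For $0<r$ small with $B_r(x)\subset N$, define
\[
v_r:=\psi \text{ in } B_r(x),\qquad v_r:=u \text{ in } \Rn\setminus B_r(x).
\]
Since $\psi$ still touches $u$ from above on $B_r(x)$, Definition~\ref{def:viscosity} applies with $N$ replaced by $B_r(x)$ and gives $\K^s v_r(x)\geq f(x)$ for every such $r$. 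The goal is to pass to the limit $r\to0$ and obtain $\K^s u(x)\geq f(x)$, where $\K^s u(x)$ is understood as the infimum over $A\in\A_2$ of the integrals $\mathcal L_A^s u(x)$. We must also show that these integrals are classically defined, with values in $(-\infty,+\infty]$.

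\textbf{Step 1 (each $\mathcal L_A^s u(x)$ is well defined).} Since $u\leq\psi$ near $x$ with equality at $x$, we get $\delta u(x,Ay)\leq \delta\psi(x,Ay)\leq C|Ay|^2$ for $y$ small. Hence the negative part of $\delta u(x,Ay)|y|^{-n-2s}$ is integrable near the origin. The tail is integrable because $u\in L^1_{\omega_s}(\Rn)$, after the linear change of variables $Ay\mapsto y$ using \eqref{alternative.writing.L.A2}. Therefore
\[
\mathcal L_A^s u(x)=\frac{C_{n,s}}{2}\int_{\Rn}\frac{\delta u(x,Ay)}{|y|^{n+2s}}\,dy
\]
is a well-defined element of $(-\infty,+\infty]$ for every $A\in\A_2$. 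The principal value coincides with the symmetrized integral because the integrand is bounded above by an integrable function.

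\textbf{Step 2 (monotonicity and limit).} For $r_1<r_2$ we have $v_{r_1}\leq v_{r_2}$, with equality at $x$, because $u\leq\psi$ on $B_{r_2}(x)$. Hence $\delta v_r(x,\cdot)$ is nonincreasing as $r\downarrow0$ and converges pointwise to $\delta u(x,\cdot)$ away from $y=0$. For each fixed $A$, the integrands $\delta v_r(x,Ay)|y|^{-n-2s}$ decrease to $\delta u(x,Ay)|y|^{-n-2s}$. They are bounded above by the integrand for $r=r_0$, which is integrable by Step 1. By monotone convergence,
\[
\mathcal L_A^s v_r(x)\downarrow \mathcal L_A^s u(x).
\]

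\textbf{Step 3 (interchanging inf and limit).} This is the only delicate point, since an infimum of decreasing families does not automatically commute with the limit. Monotonicity settles it: for each $A\in\A_2$ and each $r$,
\[
\mathcal L_A^s u(x)=\lim_{\rho\to0}\mathcal L_A^s v_\rho(x)=\inf_{\rho}\mathcal L_A^s v_\rho(x).
\]
Taking the infimum over $A$ and exchanging the two infima (always allowed) gives
\[
\K^s u(x)=\inf_{\rho}\,\K^s v_\rho(x)\geq f(x).
\]
In particular $\K^s u(x)$ is defined pointwise and the inequality holds classically.

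For supersolutions, the touching is from below, so $v_r$ increases as $r\downarrow0$. Now $\delta u(x,Ay)\geq -C|Ay|^2$ near $0$, so each $\mathcal L_A^s u(x)\in[-\infty,\infty)$ is well defined, and monotone convergence applies from below. The exchange in Step 3 is then the genuine obstacle. Since $\K^s v_r(x)\leq f(x)$ for each $r$, for each $r$ and $\epsilon>0$ there is $A_r$ with $\mathcal L_{A_r}^s v_r(x)\leq f(x)+\epsilon$. Because $v_r\geq u$, this gives $\mathcal L_{A_r}^s u(x)\leq \mathcal L_{A_r}^s v_r(x)\leq f(x)+\epsilon$, and hence $\K^s u(x)\leq f(x)+\epsilon$ with $\epsilon$ arbitrary. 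Thus the exchange is again resolved purely by monotonicity in $r$ and needs no compactness of $\A_2$, which matters because $\A_2$ is unbounded.
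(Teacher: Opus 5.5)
Your subsolution half is correct, and it suits $\K^s$ better than the route of \cite[Lemma~3.3]{Caffarelli.Silvestre} that the paper cites. That proof controls $v_r-u$ through the extremal operators of a uniformly elliptic class, which is not available for the unbounded set $\A_2$. You instead pass to the limit in each linear $\mathcal L^s_A$ separately by monotone convergence, using only $\mathcal L^s_Av_r(x)\ge\K^sv_r(x)\ge f(x)$, and this needs no ellipticity bounds.

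There is a sign slip in Step~1. Touching from above gives $\delta u(x,Ay)\le C|Ay|^2$ near $y=0$, so it is the \emph{positive} part of the integrand that is integrable. Hence $\mathcal L^s_Au(x)\in[-\infty,+\infty)$, which is what Step~2 actually uses. The classical meaning then comes from the limit inequality $\mathcal L^s_Au(x)\ge f(x)>-\infty$. This makes every $\mathcal L^s_Au(x)$ finite, i.e.\ $\delta u(x,y)\,|y|^{-n-2s}\in L^1(\Rn)$, and you should say so explicitly. The two intervals are also swapped in your supersolution paragraph.

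The supersolution half has a genuine gap. When $\psi$ touches from below, $v_r=\psi\le u$ in $B_r(x)$ and $v_r=u$ elsewhere, so $v_r\le u$ (consistent with your own remark that $v_r\uparrow u$), not $v_r\ge u$. Hence $\mathcal L^s_{A_r}v_r(x)\le\mathcal L^s_{A_r}u(x)$, and the bound $\mathcal L^s_{A_r}v_r(x)\le f(x)+\epsilon$ says nothing about $\mathcal L^s_{A_r}u(x)$. With your inequality a single $r$ would already suffice, which signals the error. What is really needed is
\[
\mathcal L^s_{A_r}u(x)-\mathcal L^s_{A_r}v_r(x)=C_{n,s}\int_{B_r(0)}\frac{(u-\psi)(x+z)}{\det(A_r)\,|A_r^{-1}z|^{n+2s}}\,dz\longrightarrow0
\]
along near-minimizers $A_r$. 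This requires both $\int_{B_{r_0}(0)}(u-\psi)(x+z)|z|^{-n-2s}\,dz<\infty$ and $A_r\in\mathcal N(\Lambda)$ for a fixed $\Lambda$. In Caffarelli--Silvestre both come from uniform ellipticity; for $\A_2$ neither is available.

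This is not a technicality, because near-minimizers can escape to infinity in $\A_2$. Take $\phi$ a smooth nonincreasing cutoff equal to $1$ near $0$, with $\phi(|z'|/|z_1|)$ extended by $0$ on $\{z_1=0\}$, let $\chi$ equal $1$ on a large ball, and let $M$ be large. Then $u(z)=\big(-M|z|^2+|z_1|^{2s}\phi(|z'|/|z_1|)\big)\chi(z)$ is touched from below at $0$ by $-(M+1)|z|^2$. It has $\mathcal L^s_Au(0)=+\infty$ for every $A$, since the cone term makes $\delta u(0,\cdot)|y|^{-n-2s}$ non-integrable. On the other hand, by Lemma~\ref{lem:newrep}, along $A_\lambda=\mathrm{diag}(\lambda_1,\lambda,\dots,\lambda)\in\A_2$ the quantity $\lambda^{-2s}\mathcal L^s_{A_\lambda}$ converges to an average of $\Theta^s$ over directions lying in the hyperplanes $\{z_1=\mathrm{const}\}$. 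That average is negative for $M$ large, and one checks that every admissible test configuration near $0$ gives $\K^sv=-\infty$. So $u$ is a viscosity supersolution of, say, $\K^su=0$ near $0$, while $\K^su(0)=+\infty$. The supersolution half therefore cannot be obtained by monotonicity, and in this generality it fails. The paper only ever invokes the subsolution half.
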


\subsection{The one-dimensional fractional Laplacian and its properties}
Next, we collect some useful properties of the one-dimensional fractional Laplacians $\Theta^su$ defined in \eqref{definition-h-s}, as well as its homogeneous extension of degree $2s$, denoted $\mathcal I^su$, and defined in \eqref{def.I.s.intro}.
Observe that one can also write
\begin{equation}\label{def.alternate.I.s}
\I^s u(x,y)
=
\left\{
\begin{aligned}
&|y|^{2s}\,\Theta^su\Big(x,\tfrac{y}{|y|}\Big), &&\textrm{for}\ y\in \mathbb{R}^{n}\setminus\{0\},\\
&0, &&\textrm{for}\ y=0.
\end{aligned}
\right.
\end{equation}
In particular,
\begin{equation}\label{I.s-homogeneous}
\I^su(x,\rho y)=\rho^{2s} \I^su(x,y), \quad \text{ for all } \rho>0.
\end{equation}

In the next remark we collect two useful properties of $\I^su$.
\begin{rem} 
Let $e\in \mathbb{S}^{n-1}$ and $x,y\in\mathbb{R}^n$. Then,
\smallskip
\begin{enumerate}\itemsep5pt

\item[\emph{(i)}] 
$\displaystyle \I^su(x,y) = 2(1-s) \int_0^\infty \frac{\delta u(x,r y)}{r^{1+2s}}\, dr,$\quad and \quad
$\displaystyle \Theta^su(x,e) = 2(1-s) \int_0^\infty\frac{\delta u(x,r e)}{r^{1+2s}}\, dr.$

\item[\emph{(ii)}]
Let $x\in\Rn$ be such that $\I^s u(x,\cdot)\in L^1(\mathbb{S}^{n-1})$, see Remark \ref{remark.I.s.in.L1},
and assume
\eqref{NSC} holds for a.e.\ $y,z\in\Rn$. Then, 
\begin{equation}\label{bound.I.s}
\I^su(x,z)\leq K|z|^{2s}
\qquad\textrm{for a.e.}\ z\in\Rn,
\end{equation}
which follows from \eqref{NSC} through a standard mollification argument.
In particular, $\Theta^su(x,e)\leq K$ for a.e.\ $e\in\mathbb S^{n-1}$.
\end{enumerate}
\end{rem}

The following representation formula will be useful. Recall that, for $A\in \S^n_+$, we defined
\begin{equation}\label{definicion-Ls}
\mathcal{L}^s_A u(x) = \frac{C_{n,s}}{2} \int_{\Rn} \frac{\delta u(x,Ay)}{|y|^{n+2s}}\, dy.
\end{equation}

\begin{lem}[Representation formula] \label{lem:newrep}
Let $A\in \S^n_+$. Then
\[
\begin{split}
\mathcal{L}^s_A u(x)
&=\frac{C_{n,s}}{4(1-s)}\int_{\mathbb S^{n-1}}\I^su\big(x,\,A e \big)\,d\H^{n-1}_ e \\
&=\frac{C_{n,s}}{4(1-s)}\int_{\mathbb{S}^{n-1}} |A e |^{2s} \Theta^s u \Big(x, \frac{A e }{|A e |}\Big)\, d\H^{n-1}_ e.
\end{split}
\]
In particular, 
\begin{equation}\label{rep.formula.Ks}
\K^s u(x) = 
\frac{C_{n,s}}{4(1-s)}\,
\inf_{A\in \A_2}\int_{\mathbb S^{n-1}}\I^su\big(x,\,A e \big)\,d\H^{n-1}_ e .
\end{equation}
\end{lem}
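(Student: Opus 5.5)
The plan is to pass to polar coordinates in the symmetric-difference form \eqref{definicion-Ls}, and then recognize the radial integral as the one-dimensional object $\I^s u$. Write $y=r e$ with $r>0$ and $e\in\mathbb S^{n-1}$, so that $dy=r^{n-1}\,dr\,d\H^{n-1}_e$. Since $|y|^{n+2s}=r^{n+2s}$, we obtain
\[
\mathcal L^s_A u(x)=\frac{C_{n,s}}{2}\int_{\mathbb S^{n-1}}\int_0^\infty \frac{\delta u(x,rAe)}{r^{1+2s}}\,dr\,d\H^{n-1}_e .
\]
By property (i) of the remark above, applied with $y=Ae$, the inner integral equals $\I^s u(x,Ae)/(2(1-s))$. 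This gives the first identity, with constant $\frac{C_{n,s}}{2}\cdot\frac{1}{2(1-s)}=\frac{C_{n,s}}{4(1-s)}$.

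The second identity follows directly from the homogeneity formula \eqref{def.alternate.I.s}. Since $A\in\S^n_+$, we have $Ae\neq0$, and therefore $\I^s u(x,Ae)=|Ae|^{2s}\,\Theta^s u(x,Ae/|Ae|)$. Formula \eqref{rep.formula.Ks} is then obtained by taking the infimum over $A\in\A_2$ in the definition \eqref{frac-2-hessian}, after rewriting $\mathcal L^s_A u$ in its symmetric form \eqref{definicion-Ls}.

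The only real issue is justifying Fubini/Tonelli, because $\delta u$ can change sign. At points where $u$ is touched by a $C^2$ function, or more generally where the integrals converge absolutely, I would argue as follows. Near $r=0$, $|\delta u(x,rAe)|\le C r^2|Ae|^2$, which is integrable against $r^{-1-2s}$. For large $r$, the $L^1_{\omega_s}$ bound controls the tail, because the change of variables $y\mapsto Ay$ is bi-Lipschitz. Hence $\delta u(x,Ay)|y|^{-n-2s}\in L^1(\Rn)$, and Fubini applies. In general the principal value is understood as the limit over the annuli $\varepsilon<|y|<R$; these sets are radial in $y$, so the same computation goes through on truncations and then passes to the limit. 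I expect this integrability bookkeeping to be the only delicate step; the rest is a change of variables.
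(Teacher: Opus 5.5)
Your proof is correct and follows the paper's argument: polar coordinates in the symmetrized form \eqref{definicion-Ls}, then identification of the radial integral. The paper substitutes $r=|Ae|\rho$ to obtain $\Theta^s$ first and then converts with \eqref{def.alternate.I.s}, while you read off $\I^s u(x,Ae)$ directly from property (i); this is the same computation.

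One caveat concerns your integrability discussion, which the paper omits. Touching from above by a $C^2$ function only gives the one-sided bound $\delta u(x,rAe)\le C r^2|Ae|^2$ near $r=0$, not a bound on $|\delta u|$. So instead of claiming absolute integrability, apply Tonelli separately to the positive part, which is integrable, and to the negative part, with the identity holding in $[-\infty,\infty)$. The same splitting justifies your limit over annuli: use dominated convergence for the positive part and monotone convergence for the negative part.
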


\begin{proof}
By the usual polar coordinates formula,
\begin{align*}
\int_{\Rn} \frac{\delta u(x,Ay)}{|y|^{n+2s}}\, dy
&= \int_{\mathbb{S}^{n-1}} \Big(\int_0^\infty \frac{\delta u(x,\rho A e )}{\rho^{1+2s}}\, d\rho \Big) \,d\H^{n-1}_ e .
\end{align*}
Making the change of variables $r= |A e | \rho$, we get
\[
 \int_0^\infty \frac{\delta u(x,\rho A e )}{\rho^{1+2s}}\, d\rho
= |A e |^{2s}\int_0^\infty \frac{\delta u(x,r \frac{A e }{|A e |})}{r^{1+2s}}\, dr
= |A e |^{2s} \frac{1}{2(1-s)}\Theta^s u\Big(x,\frac{A e }{|A e |}\Big).
\]
Using \eqref{def.alternate.I.s}, the proof is complete.
\end{proof}

\begin{lem}[Compatibility condition]
\label{lemma:compatibility} Assume the hypotheses of Theorem \ref{thm:main}, and let $x\in\Omega$ be such that $u$ can be touched from above by a $C^2$-function. It holds that \begin{equation}\label{compatibility-non-local}
\eta_0\leq\K^s u(x)\leq  K \cdot \frac{\omega_{n-1}}{4}\cdot \frac{C_{n,s}}{1-s}\cdot \Big(\frac{2(n-1)}{n}\Big)^{\frac{s}{2}}
= c_{K,s}\,m_*^s,
\end{equation}
where $c_{K,s}$ is defined in \eqref{defi-C-0.intro} and $m_*$ in \eqref{intro-m*}.
Moreover, this inequality is stable under the limit $s\to 1$, and converges to the local compatibility condition in \eqref{calculation.compatibility.local} below.
\end{lem}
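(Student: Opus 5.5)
Since $u$ is touched from above by a $C^2$-function at $x$, Lemma~\ref{lem:classicalsense} lets us evaluate $\K^s u(x)$ classically, so the lower bound $\eta_0\le \K^s u(x)$ is immediate from \eqref{nonlocal.2.convexity}. For the upper bound, we bound the infimum by testing with a single admissible matrix, the vertex matrix $A_*=\sqrt{t_*}\,I\in\A_2$ from \eqref{vertex-matrix-intro}. The representation formula \eqref{rep.formula.Ks} of Lemma~\ref{lem:newrep} gives
\[
\K^s u(x)\le \mathcal L^s_{A_*}u(x)=\frac{C_{n,s}}{4(1-s)}\int_{\mathbb S^{n-1}}\I^s u\big(x,\sqrt{t_*}\,e\big)\,d\H^{n-1}_e
=\frac{C_{n,s}}{4(1-s)}\,t_*^{s}\int_{\mathbb S^{n-1}}\Theta^s u(x,e)\,d\H^{n-1}_e ,
\]
where the last equality uses the homogeneity \eqref{I.s-homogeneous}.

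Next we use the consequence \eqref{bound.I.s} of \eqref{NSC}, namely $\Theta^s u(x,e)\le K$ for a.e.\ $e\in\mathbb S^{n-1}$. This requires $\I^s u(x,\cdot)\in L^1(\mathbb S^{n-1})$, which holds by Remark~\ref{remark.I.s.in.L1}. Integrating over the sphere yields
\[
\K^s u(x)\le \frac{C_{n,s}}{4(1-s)}\,\omega_{n-1}\,K\,t_*^{s}.
\]

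It remains to identify $t_*$. From \eqref{intro-m*}, $m_*=\|A_*\|_F^2=n t_*=\sqrt{2n(n-1)}$, so $t_*=\sqrt{2(n-1)/n}$ and $t_*^s=\big(2(n-1)/n\big)^{s/2}$. This matches the middle expression in \eqref{compatibility-non-local}. Using \eqref{defi-C-0.intro}, we have $c_{K,s}m_*^s=K\frac{\omega_{n-1}}{4}\frac{C_{n,s}}{1-s}\,(m_*/n)^s$, and $(m_*/n)^s=t_*^s$, so the two forms of the bound agree. To be safe, I would double-check $t_*$ against \eqref{intro.eq1.hyperboloid} with $\mu=t(1,\dots,1)$:
\[
\binom n2 t^2-\tfrac{n-2}{2(n-1)}n^2t^2=\tfrac{n}{2(n-1)}t^2=1,
\]
which confirms $t_*^2=2(n-1)/n$.

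For the stability as $s\to1$, \eqref{eq:Cns.limit} gives $\frac{C_{n,s}}{1-s}\to \frac{4n}{\omega_{n-1}}$, so the upper bound tends to $nK\sqrt{2(n-1)/n}=K\sqrt{2n(n-1)}=K m_*$. This is the local bound obtained by testing $\tr(A_*D^2uA_*)=t_*\Delta u\le t_* nK$ under semiconcavity. The only delicate point in the whole argument is justifying the a.e.\ bound on $\Theta^s u$ and the integrability needed to apply the representation formula, and both are supplied by the earlier remarks.
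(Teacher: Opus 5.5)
Your proposal is correct and matches the paper's proof essentially step for step. You get the lower bound from Lemma~\ref{lem:classicalsense}, bound the infimum by testing with the vertex matrix $A_*=\sqrt{t_*}\,I$ in the representation formula, and pull out $t_*^s$ by $2s$-homogeneity; you then use the a.e.\ bound \eqref{bound.I.s} and pass to the limit via \eqref{eq:Cns.limit}. Your extra checks of $t_*$ and of the identity $c_{K,s}m_*^s=K\frac{\omega_{n-1}}{4}\frac{C_{n,s}}{1-s}t_*^s$ are consistent with the paper.
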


\begin{proof}
The hypotheses imply $\K^s u(x)\geq\eta_0$ holds classically, by Lemma~\ref{lem:classicalsense}.
Let $A_{*}=\sqrt{t_*}I\in \A_2$ be the vertex matrix defined in \eqref{vertex-matrix-intro}.
 Using the representation formula \eqref{rep.formula.Ks}, we get
\[
\begin{split}
 \eta_0 &\leq \K^s u(x)
 = \frac{C_{n,s}}{4(1-s)} \inf_{A\in \A_2} \int_{\mathbb S^{n-1}} \I^su\big(x,A e \big)\,d\H^{n-1}_ e \\
&\leq \frac{C_{n,s}}{4(1-s)} \int_{\mathbb S^{n-1}} \I^su\big(x,A_* e \big)\,d\H^{n-1}_ e 
=\frac{C_{n,s}}{4(1-s)}\,t_*^s \int_{\mathbb S^{n-1}} \I^su\big(x, e \big)\,d\H^{n-1}_ e.
\end{split}
\]
From here, using \eqref{bound.I.s}, we conclude
 \begin{equation*}
 \eta_0 \leq \K^s u(x)
 \leq K\,\frac{\omega_{n-1}}{4}\, \frac{C_{n,s}}{1-s}\, t_*^s= c_{K,s}\,m_*^s.
\end{equation*}
For the limit as $s\to 1$, just recall \eqref{eq:Cns.limit}.
\end{proof}

In the next result, we discuss the relation between hypotheses \eqref{eq:onesidedreg}-\eqref{eq:hyp.on.m} and \eqref{NSC}.
\begin{lem}\label{lemma:implication-semiconcavity}
Semiconcavity with a modulus, i.e., conditions \eqref{eq:onesidedreg}-\eqref{eq:hyp.on.m}, implies \eqref{NSC}. 
\end{lem}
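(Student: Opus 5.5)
We have to show that \eqref{eq:onesidedreg}--\eqref{eq:hyp.on.m} imply $\mathcal Q^s u(x;y,z)\le 2K|z|^{2s}$. The plan is to write $\mathcal Q^s u$ as a single radial integral of second differences and then apply the one-sided bound \eqref{eq:onesidedreg} in the increment variable $z$, not in $y$.

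\textbf{Step 1: rewriting $\mathcal Q^s u$.} By item (i) of the Remark on $\I^s$,
\[
\mathcal Q^s u(x;y,z)=2(1-s)\int_0^\infty \frac{\delta u(x,r(y+z))+\delta u(x,r(y-z))-2\,\delta u(x,ry)}{r^{1+2s}}\,dr .
\]
Expanding each $\delta u$, the terms $-2u(x)$ cancel, since they appear with total coefficient $-2-2+4=0$. What remains is
\[
u(x+ry+rz)+u(x+ry-rz)-2u(x+ry)\;+\;u(x-ry+rz)+u(x-ry-rz)-2u(x-ry).
\]
These two groups are exactly $\delta u(x+ry,rz)+\delta u(x-ry,rz)$, the second differences with increment $rz$ based at the points $x\pm ry$.

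\textbf{Step 2: the estimate.} Apply \eqref{eq:onesidedreg} at the points $x\pm ry$, which is allowed because it holds for all $x,y\in\Rn$. The integrand is then bounded by $2m(r|z|)/r^{1+2s}$. Changing variables $\rho=r|z|$ gives
\[
\mathcal Q^s u(x;y,z)\le 2(1-s)\int_0^\infty\frac{2m(r|z|)}{r^{1+2s}}\,dr=2|z|^{2s}\cdot 2(1-s)\int_0^\infty\frac{m(\rho)}{\rho^{1+2s}}\,d\rho\le 2K|z|^{2s}.
\]
The last inequality is \eqref{eq:hyp.on.m}. The case $z=0$ is trivial.

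\textbf{Main obstacle: justifying the splitting.} The individual integrals $\I^s u(x,y\pm z)$ and $\I^s u(x,y)$ may not converge absolutely, so combining them into one integral needs care. I would argue as follows.
\begin{itemize}
\item Near $r=0$, \eqref{eq:onesidedreg} bounds each $\delta u(x,r w)$ from above by $m(r|w|)$. Each integrand is therefore bounded above by an integrable function, so each integral is well defined in $(-\infty,+\infty]$.
\item At infinity, integrability follows from the same majorant. The lower tails are finite for a.e. $y,z$ by Remark~\ref{remark.I.s.in.L1}, using $u\in L^1_{\omega_s}$.
\item With all three quantities finite, the combination in Step 1 is linear and the manipulation is legitimate.
\end{itemize}
To be fully safe, I would first perform the computation on the truncated integrals $\int_\varepsilon^R$, where linearity is trivial and the bound of Step 2 holds uniformly in $\varepsilon,R$. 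Then let $\varepsilon\to0$ and $R\to\infty$ for a.e. $y,z$ at which the $\I^s$ terms are finite.
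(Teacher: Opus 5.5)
Your proof is correct and is exactly the paper's argument: the expansion $\delta u(x,r(y+z))+\delta u(x,r(y-z))-2\,\delta u(x,ry)=\delta u(x+ry,rz)+\delta u(x-ry,rz)$, the bound \eqref{eq:onesidedreg} applied at the base points $x\pm ry$, and the scaling $\rho=r|z|$ combined with \eqref{eq:hyp.on.m}. Your extra well-definedness discussion, which the paper omits, is fine in outline with two small corrections: an integrable upper majorant places each $\I^s u$ in $[-\infty,+\infty)$ rather than $(-\infty,+\infty]$, and the a.e.\ finiteness you quote from Remark~\ref{remark.I.s.in.L1} needs \eqref{nonlocal.2.convexity} as well, not just $u\in L^1_{\omega_s}(\Rn)$.
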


\begin{proof}
Expanding the second differences,
\begin{equation}\label{expansion.second.differences}
\delta u(x,r(y+z))+\delta u(x,r(y-z))-2\,\delta u(x,ry)
=\delta u(x+ry,rz)+\delta u(x-ry,rz).
\end{equation}
Then, using \eqref{eq:onesidedreg}-\eqref{eq:hyp.on.m}, we calculate
\[
\begin{split}
\I^s u(x,y+z)&+ \I^s u(x,y-z) - 2\,\I^s u(x,y) \\
&= 2(1-s)\int_0^\infty \frac{\delta u(x,r(y+z))+\delta u(x,r(y-z))-2\delta u(x,ry)}{r^{1+2s}}\, dr\\
&= 2(1-s)\int_0^\infty \frac{\delta u(x+ry,rz)+\delta u(x-ry,rz)}{r^{1+2s}}\, dr\\
&\leq 4(1-s)\int_0^\infty \frac{m(r|z|)}{r^{1+2s}}\, dr 
= 4(1-s)\,|z|^{2s} \int_0^\infty \frac{m(r)}{r^{1+2s}}\, dr 
\leq 2K |z|^{2s},
\end{split}
\]
which is condition \eqref{NSC}.
\end{proof}


\section{The geometry and degeneracies of the class $\A_2$} \label{sec:A2class}

We prove a characterization of the matrices in $\A_2$ and some useful geometric properties. Given $A\in \S^n_+$, we denote the eigenvalues of $A$ and $A^2$ by 
$$\lambda(A)=(\lambda_1,\hdots,\lambda_n)\quad \text{ and } \quad \mu(A):= \lambda(A^2)=(\mu_1,\hdots, \mu_n),$$
respectively. Note that $\mu_i=\lambda_i^2$, for all $i=1,\hdots, n$.

\subsection{The class $\A_2$ as a hyperboloid} 
We identify the set $\mathcal A_2$ as a hypersurface in $ \mathbb R^n$.

\begin{lem}[Characterization] \label{lem:char}
 A matrix $A\in \S^n_+$ belongs to the class $\A_2$  if and only if 
 \begin{equation} \label{eq1}
 \sigma_2(\mu) - \frac{n-2}{2(n-1)}\sigma_1(\mu)^2=1, 
 \end{equation} 
or equivalently,
\begin{equation} \label{eq:classA2}
 \frac{1}{n-1}\sigma_1(\mu)^2-|\mu|^2=2.
 \end{equation}
\end{lem}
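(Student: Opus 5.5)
The plan is to express the defining condition of $\A_2$ entirely in terms of $\mu=\lambda(A)^2$ by inverting the linear map $\zeta\mapsto(\sigma_{1,i}(\zeta))_{i}$. By definition, $A\in\A_2$ if and only if $\mu_i=\sigma_{1,i}(\zeta)=\sigma_1(\zeta)-\zeta_i$ for some $\zeta\in\Gamma_2$ with $\sigma_2(\zeta)=1$. Summing over $i$ gives $\sigma_1(\mu)=(n-1)\sigma_1(\zeta)$. Hence
\[
\zeta_i=\frac{\sigma_1(\mu)}{n-1}-\mu_i ,
\]
and the correspondence $\zeta\leftrightarrow\mu$ is a linear bijection of $\Rn$ (recall $n\ge3$).

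Next, compute $\sigma_2(\zeta)$ in terms of $\mu$ using $2\sigma_2(\zeta)=\sigma_1(\zeta)^2-|\zeta|^2$. We have $\sigma_1(\zeta)=\sigma_1(\mu)/(n-1)$. Expanding,
\[
|\zeta|^2=\frac{n\,\sigma_1(\mu)^2}{(n-1)^2}-\frac{2\sigma_1(\mu)^2}{n-1}+|\mu|^2 .
\]
Therefore
\[
2\sigma_2(\zeta)=\frac{\sigma_1(\mu)^2(1-n+2n-2)}{(n-1)^2}-|\mu|^2=\frac{\sigma_1(\mu)^2}{n-1}-|\mu|^2 .
\]
So $\sigma_2(\zeta)=1$ is exactly \eqref{eq:classA2}. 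Substituting $|\mu|^2=\sigma_1(\mu)^2-2\sigma_2(\mu)$ converts it to \eqref{eq1}, since $\tfrac12\bigl(1-\tfrac1{n-1}\bigr)=\tfrac{n-2}{2(n-1)}$.

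It remains to account for the constraint $\zeta\in\Gamma_2$, i.e.\ $\sigma_1(\zeta)>0$ together with $\sigma_2(\zeta)>0$; this is the only real subtlety. For the direct implication: if $A\in\A_2$, the $\zeta$ provided by the definition gives \eqref{eq:classA2} by the computation above, and nothing more is needed. For the converse: given $A\in\S^n_+$ satisfying \eqref{eq:classA2}, define $\zeta$ by the formula above. Then $\sigma_2(\zeta)=1>0$, and $\sigma_1(\zeta)=\sigma_1(\mu)/(n-1)>0$ because every $\mu_i>0$. Thus $\zeta\in\Gamma_2$, and by construction $\sigma_{1,i}(\zeta)=\sigma_1(\zeta)-\zeta_i=\mu_i=\lambda_i(A)^2$, so $A\in\A_2$.

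One should also note that $\A_2$ imposes only a condition on the eigenvalues, so the ordering of the $\mu_i$ is irrelevant. Both \eqref{eq1} and \eqref{eq:classA2} are symmetric in $\mu$, so no ordering issue arises. The main care point is the converse direction: one must check that the positivity of $\mu$ automatically places the reconstructed $\zeta$ in the correct (positive) sheet $\Gamma_2$ rather than $-\Gamma_2$, which the sign of $\sigma_1(\mu)$ settles.
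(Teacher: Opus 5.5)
Your proof is correct and rests on the same identity as the paper's, $\sigma_2(\zeta)=\sigma_2(\mu)-\frac{n-2}{2(n-1)}\sigma_1(\mu)^2$ under the linear change of variables $\mu_i=\sigma_1(\zeta)-\zeta_i$. The paper obtains it by expanding $\sigma_2(\mu)$ and $\sigma_1(\mu)^2$ in terms of $\zeta$, whereas you invert the map and compute $\sigma_2(\zeta)$ in terms of $\mu$. Your inversion makes the converse direction fully explicit (existence of $\zeta$, and $\zeta\in\Gamma_2$ because $\sigma_1(\zeta)=\sigma_1(\mu)/(n-1)>0$), a point the paper's proof leaves implicit.
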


\begin{proof}
 Let $A\in \S^n_+$ be such that $\mu_i :=\lambda_i^2 = \sigma_{1,i}(\zeta)$ for some $\zeta\in \Gamma_2$. We will see that $\sigma_2(\zeta)=1$ if and only if $\mu$ satisfies $\sigma_2(\mu) - \frac{n-2}{2(n-1)}\sigma_1(\mu)^2=1$.
 Indeed, observe that
\[
\begin{split}
\sigma_2(\sigma_{1,i})
&=\sum_{1\leq i<j\leq n} \sigma_{1,i}\sigma_{1,j}
=
\sum_{1\leq i<j\leq n} (\sigma_{1}-\zeta_i)(\sigma_{1}-\zeta_j)\\
&=\sum_{1\leq i<j\leq n} \left\{\sigma_{1}^2-(\zeta_i+\zeta_j)\sigma_1+\zeta_i\zeta_j\right\}\\
&=\sigma_{1}^2 \,\frac{n(n-1)}{2}-\sum_{1\leq i<j\leq n} (\zeta_i+\zeta_j)\sigma_1+\sum_{1\leq i<j\leq n}\zeta_i\zeta_j\\
&=\sigma_{1}^2 \,\frac{n(n-1)}{2}-(n-1) \sigma_1\sigma_1+\sigma_2
=\sigma_{1}^2 \,\frac{(n-1)(n-2)}{2}+\sigma_2.
\end{split}
\]
Moreover,
\[
\big(\sigma_1(\sigma_{1,i})\big)^2
=\left((n-1) \sigma_1\right)^2=(n-1)^2 \sigma_1^2.
\]
Therefore, $\mu_i=\sigma_{1,i}(\zeta)$ satisfies
\[
\sigma_2(\mu)-\frac{n-2}{2(n-1)}\sigma_1(\mu)^2=\sigma_2(\zeta),
\]
and the first claim holds. Note also that 
$$
\sigma_1(\mu)^2 = \Big( \sum_{i=1}^n \mu_i \Big)^2 = \sum_{i=1}^n \mu_i^2 + 2 \sum_{i<j} \mu_i\mu_j = |\mu|^2 + 2 \sigma_2(\mu).
$$
Hence, $\sigma_2(\mu)= \frac{\sigma_1(\mu)^2-|\mu|^2}{2}$, and we can write \eqref{eq1} equivalently as
$$
 \frac{\sigma_1(\mu)^2-|\mu|^2}{2} - \frac{n-2}{2(n-1)}\sigma_1(\mu)^2=1,
$$
which simplifies to \eqref{eq:classA2}, 
as desired.
\end{proof}

\begin{rem}
 The hypersurface defined by \eqref{eq1} is a two-sheet hyperboloid, see Figure~\ref{fig:hyperboloid}, which is symmetric about the axis $\{ t (1, \hdots, 1) : t\in \R\}$. 
 In the positive sheet, the vertex, or lowest point along the axis, is attained at $\mu_*=t_*(1,\hdots, 1)$, with 
 \begin{equation}\label{definicion-t*}
 t_*:=\left(\frac{2(n-1)}{n}\right)^{1/2}.
 \end{equation}
At that point, the tangent plane is $\sigma_1(\mu)=m_*$, where we define
\begin{equation*}
m_*:=\sqrt{2n(n-1)}.
\end{equation*}
We denote by $A_*$ the ``vertex'' matrix, i.e., 
\begin{equation}\label{def.A.star}
 A_*=\sqrt{t_*}\,I
 =
 \left(\frac{2(n-1)}{n}\right)^{1/4}I.
\end{equation}
The matrix $A_*$ has eigenvalues given by $\mu_*^{1/2}$. It holds
 \begin{equation*}
 \|A_*\|_F:=\sqrt{\tr(A_*^2)}=\sqrt{m_*}.
 \end{equation*}
\end{rem}
 
The previous remark leads to a sharp lower estimate of the Frobenius norm
\[
\|A\|_F:=\sqrt{\tr(A^2)}
\]
of matrices $A\in \A_2$.
Namely, since $\sigma_1(\mu)=\tr(A^2)=\|A\|_F^2$, the minimum of $\|A\|_F$ over $\A_2$ must be attained where a level plane of $\sigma_1(\mu)$ becomes tangent to the hyperboloid \eqref{eq1}.

\begin{lem} \label{lem:trace}
 For every $A\in \A_2$, it holds that
 $$
 \|A\|_F^2=\tr(A^2) \geq m_*,
 $$
with equality if and only if $A=A_*$, the vertex.
\end{lem}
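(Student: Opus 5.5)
The plan is to reduce the statement to an inequality between $\sigma_1(\mu)$ and $|\mu|$, using the characterization in Lemma~\ref{lem:char}. Fix $A\in\A_2$ and write $\mu=\mu(A)$, with $\mu_i>0$. Then $\tr(A^2)=\sigma_1(\mu)$, and by \eqref{eq:classA2}
\[
\frac{1}{n-1}\sigma_1(\mu)^2 = 2+|\mu|^2 .
\]

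The key step is the Cauchy--Schwarz inequality $|\mu|^2\ge \frac1n\sigma_1(\mu)^2$, with equality if and only if all the $\mu_i$ coincide. Substituting it into the identity above gives
\[
\frac{1}{n-1}\sigma_1(\mu)^2 \ge 2+\frac1n\sigma_1(\mu)^2 .
\]
Rearranging,
\[
\Big(\frac{1}{n-1}-\frac1n\Big)\sigma_1(\mu)^2=\frac{1}{n(n-1)}\sigma_1(\mu)^2\ge 2,
\]
that is, $\sigma_1(\mu)^2\ge 2n(n-1)=m_*^2$. Since $\sigma_1(\mu)>0$, this yields $\tr(A^2)\ge m_*$.

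For the equality case, suppose $\tr(A^2)=m_*$. Then the Cauchy--Schwarz step must be an equality, so $\mu=t(1,\dots,1)$ for some $t>0$. Plugging this into \eqref{eq:classA2} gives $\frac{n^2t^2}{n-1}-nt^2=2$, which simplifies to $\frac{n}{n-1}t^2=2$. Hence $t=t_*$ as in \eqref{definicion-t*}. Because $A$ is symmetric positive definite with all eigenvalues equal to $\sqrt{t_*}$, it follows that $A=\sqrt{t_*}\,I=A_*$. Conversely, $A_*\in\A_2$ because $\mu_*$ satisfies \eqref{eq:classA2}, and $\tr(A_*^2)=nt_*=\sqrt{2n(n-1)}=m_*$.

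There is no real obstacle here; the argument is a direct consequence of Lemma~\ref{lem:char}. The only care points are keeping track of the sign of $\sigma_1$ and recalling that a symmetric matrix with all eigenvalues equal is a scalar multiple of the identity.
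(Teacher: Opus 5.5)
Your proposal is correct and follows essentially the same argument as the paper: you combine \eqref{eq:classA2} with the Cauchy--Schwarz bound $|\mu|^2\geq\frac1n\sigma_1(\mu)^2$ to get $\sigma_1(\mu)^2\geq 2n(n-1)=m_*^2$, and use the equality case of Cauchy--Schwarz to force $\mu=t_*(1,\dots,1)$. Your explicit check that $A_*\in\A_2$ with $\tr(A_*^2)=m_*$ is a small completeness addition that the paper leaves implicit.
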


\begin{proof}
 Given $A\in \A_2$, by \eqref{eq:classA2} and the Cauchy-Schwarz inequality, we have
 $$
 \frac{1}{n-1} \sigma_1(\mu)^2 = 2 + |\mu|^2 \geq2+\frac{1}{n}\sigma_1(\mu)^2.
 $$ 
 Therefore,
 \[
\tr(A^2)^2=\sigma_1(\mu)^2 \geq 2n(n-1)=m_*^2.
 \]
 Equality in Cauchy-Schwarz holds if and only if $\mu=t(1,\dots,1)$, i.e., $A^2= t\,I$, and then \eqref{eq:classA2} makes $t=t_*$.
\end{proof}

\subsection{Eigenvalue estimates and possible degeneracies}
We provide eigenvalue estimates for matrices in $\A_2$, and study the possible degeneracy cases.

\begin{lem} \label{lema1}
 Let $A\in \A_2,$ with eigenvalues $\lambda_1\leq\lambda_2\leq\ldots\leq\lambda_n$. Then, 
 $$
 \lambda_i \lambda_j \geq 1\quad\textrm{for any $i\neq j$}.
 $$
In particular, if $\lambda_1<1$,
 it is the only eigenvalue smaller than 1, and hence simple.
\end{lem}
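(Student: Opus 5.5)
We will work with the squared eigenvalues $\mu_i=\lambda_i^2$ and the characterization of Lemma~\ref{lem:char}. The target is then $\mu_i\mu_j\geq 1$ for $i\neq j$, and $\lambda_i\lambda_j\geq1$ follows by taking square roots.

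By the definition of $\A_2$ we may write $\mu_i=\sigma_{1,i}(\zeta)=\sigma_1(\zeta)-\zeta_i$ for some $\zeta\in\Gamma_2$ with $\sigma_2(\zeta)=1$. Fix $i\neq j$ and write $S=\sigma_1(\zeta)>0$. Then
\[
\mu_i\mu_j=(S-\zeta_i)(S-\zeta_j)=S^2-S(\zeta_i+\zeta_j)+\zeta_i\zeta_j .
\]
The natural step is to split $\zeta=(\zeta_i,\zeta_j,\zeta')$, where $\zeta'\in\R^{n-2}$ collects the remaining entries, and set $T:=\sigma_1(\zeta')$. Then $S=\zeta_i+\zeta_j+T$, so $S-\zeta_i=\zeta_j+T$ and $S-\zeta_j=\zeta_i+T$. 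Hence
\[
\mu_i\mu_j=(\zeta_i+T)(\zeta_j+T)=\zeta_i\zeta_j+T(\zeta_i+\zeta_j)+T^2 .
\]
On the other hand,
\[
1=\sigma_2(\zeta)=\zeta_i\zeta_j+(\zeta_i+\zeta_j)T+\sigma_2(\zeta').
\]
Subtracting the two identities gives
\[
\mu_i\mu_j-1=T^2-\sigma_2(\zeta')=\tfrac12\big(T^2+|\zeta'|^2\big)\geq 0,
\]
where we used $T^2=|\zeta'|^2+2\sigma_2(\zeta')$. This already proves $\mu_i\mu_j\geq1$.

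The only point needing care is positivity. Since $A\in\S^n_+$, every $\mu_i>0$, so $\lambda_i=\sqrt{\mu_i}$ and $\lambda_i\lambda_j=\sqrt{\mu_i\mu_j}\geq 1$. No sign information on the individual $\zeta_i$ is required; membership in $\Gamma_2$ enters only through the positivity of the $\mu_i$ already encoded in $\A_2$.

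For the last assertion, suppose $\lambda_1<1$. For any $j\geq2$, the bound $\lambda_1\lambda_j\geq1$ gives $\lambda_j\geq1/\lambda_1>1$. So $\lambda_1$ is the only eigenvalue smaller than $1$, and it is simple. The main potential obstacle was finding the algebraic identity for $\mu_i\mu_j-1$; once $\zeta$ is decomposed as $(\zeta_i,\zeta_j,\zeta')$, it reduces to the elementary relation between $\sigma_1^2$, $\sigma_2$ and $|\cdot|^2$, the same relation already used in the proof of Lemma~\ref{lem:char}.
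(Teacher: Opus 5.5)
Your proof is correct, and it takes a different route from the paper's.

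The paper stays in the $\mu$-variables. It uses the hyperboloid equation \eqref{eq1} from Lemma~\ref{lem:char}, fixes $\mu_1,\mu_2$, and regards $G=\sigma_2(\mu)-\frac{n-2}{2(n-1)}\sigma_1(\mu)^2$ as a quadratic function of $(\mu_3,\dots,\mu_n)\in\mathbb{R}^{n-2}$. It finds the critical point $\mu_3=\dots=\mu_n=\mu_1+\mu_2$ and checks concavity through the eigenvalues of $J_{n-2}-(n-1)I_{n-2}$. It then concludes $1=G\leq G_{\max}=\mu_1\mu_2$.

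You go back to the defining parameters $\zeta$ and get the exact identity
\[
\mu_i\mu_j-\sigma_2(\zeta)=\tfrac12\big(\sigma_1(\zeta')^2+|\zeta'|^2\big).
\]
This replaces the optimization and the Hessian computation by one line of algebra. The two arguments are the same fact in different coordinates. With $\mu_i,\mu_j$ fixed, $\zeta'$ parametrizes the remaining $\mu_k$ through an invertible affine change of variables. The paper's maximizer corresponds exactly to $\zeta'=0$, and your right-hand side is precisely the deficit $G_{\max}-G$ written in terms of $\zeta'$.

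Your version buys three things. It is shorter. It identifies the equality case: $\lambda_i\lambda_j=1$ if and only if $\zeta_k=0$ for all $k\neq i,j$. It also makes visible that only $\sigma_2(\zeta)=1$ is used, plus positivity of the $\mu_i$ to take square roots, and not the full $\Gamma_2$ condition.

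The paper's version works directly for any positive $\mu$ on the hyperboloid \eqref{eq1}, without recovering $\zeta$. That matches the geometric description of $\A_2$ used throughout that section. By Lemma~\ref{lem:char} the two starting points are equivalent.
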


\begin{proof}
 Let $A\in \A_2$. Without loss of generality, let $i=1$ and $j=2$. For $\mu_1, \mu_2>0$ fixed, define the function
 $$
 G(\mu_3, \hdots, \mu_{n}) := \sigma_2(\mu)-\frac{n-2}{2(n-1)} \sigma_1(\mu)^2.
 $$
 To maximize $G$, we compute
 $$
 \frac{\partial G}{\partial \mu_i} = \sum_{j\neq i} \mu_j - \frac{n-2}{n-1} \sum_{j=1}^n \mu_j 
 = \frac{1}{n-1} \sum_{j\neq i} \mu_j - \frac{n-2}{n-1} \mu_i, \quad \text{ for all } 3 \leq i \leq n. 
 $$
 Setting $ \frac{\partial G}{\partial \mu_i}=0$, we see that
 \[
 \mu_ i = \frac{1}{n-2} \sum_{j \neq i} \mu_j\quad  \textrm{for all $3 \leq i \leq n,
 $}
 \]
 and, from there, that $\mu_3=\cdots=\mu_n =\mu_1+\mu_2$.
 To check that this is an absolute maximum, we will verify that $D^2 G \leq 0$.
 Note that $G$ is a quadratic polynomial, and
 $$
 \frac{\partial^2 G}{\partial \mu_i \partial \mu_\ell} = \frac{1}{n-1} (1-\delta_{i\ell}) - \frac{n-2}{n-1}\delta_{i\ell}.
 $$
 Hence, 
 $$
 D^2 G = \frac{1}{n-1} ( J_{n-2} - (n-1) I_{n-2}),
 $$
 where $I_{n-2}$ is the identity matrix and $J_{n-2}$ is the matrix with all entries equal to 1. Note that $J_{n-2}$ has two eigenvalues, $a=0$ (with multiplicity $n-3$) and $a=n-2$ (with multiplicity~1). Moreover, if $b$ is an eigenvalue of $D^2 G$, then
 $$
 b= \frac{a-(n-1)}{n-1}.
 $$
 Therefore, $D^2G$ has two eigenvalues, $b=-1$ (with multiplicity $n-3$) and $b=-\frac{1}{n-1}$ (with multiplicity 1), which implies that $D^2G\leq 0.$ 
 It follows that 
 $$
 G_{\max} = G(\mu_1+\mu_2,\hdots,\mu_1+\mu_2) = \mu_1 \mu_2.
 $$
 Since $G(\mu_3, \hdots, \mu_{n})=1$ for $A\in \A_2$, then
 $$
 1\leq \mu_1\mu_2=\lambda_1^2\lambda_2^2,
 $$
 from where we conclude that $\lambda_1\lambda_2\geq 1$.
\end{proof}

\begin{lem} \label{lema2}
 Let $A\in \A_2,$ with eigenvalues $\lambda_1\leq\lambda_2\leq\ldots\leq\lambda_n$. Then,
 $$
 1- \Big(\frac{\lambda_i}{\lambda_{n}}\Big)^2 \leq 2\, \frac{\lambda_{1}}{\lambda_{n}}, \quad \text{ for all } i=2,\ldots,n.
 $$
\end{lem}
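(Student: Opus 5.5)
The plan is to imitate the proof of Lemma~\ref{lema1}: freeze three eigenvalues and maximize the constraint function over the others. Work with $\mu=\lambda(A^2)$, so $\mu_i=\lambda_i^2$, and use the characterization of Lemma~\ref{lem:char}:
\[
G(\mu):=\sigma_2(\mu)-\tfrac{n-2}{2(n-1)}\sigma_1(\mu)^2=1 .
\]
Write $a=\mu_1$, $b=\mu_i$, $c=\mu_n$, with $a\le b\le c$. After multiplying by $\mu_n=\lambda_n^2$, the claim reads
\[
c-b\le 2\sqrt{ac}.
\]
It therefore suffices to prove the stronger statement $(c-b)^2\le 4ac$ whenever $G(\mu)\ge 0$.

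\emph{Step 1 (reduction to three variables).} Fix $a,b,c$ and regard $G$ as a function of the remaining $n-3$ coordinates. The Hessian computation in Lemma~\ref{lema1} shows that $G$ is concave in any subset of variables. Its critical point is found from $\partial G/\partial\mu_k=0$, i.e. $(n-2)\mu_k=\sum_{j\ne k}\mu_j$. With all free coordinates equal to $t$, this becomes $(n-2)t=a+b+c+(n-4)t$, so $t=\tfrac{a+b+c}{2}$. Substituting gives the maximum $G_{\max}(a,b,c)\ge G(\mu)=1>0$. I expect $G_{\max}$ to be a positive multiple of
\[
P(a,b,c):=2(ab+bc+ca)-a^2-b^2-c^2 .
\]
In dimension $n=3$ there are no free variables and $4G=P$ exactly, which suggests the general form. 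The algebra of this substitution is routine, but it must be carried out to confirm the positive constant. If the result is not exactly proportional to $P$, I would instead show directly that $G_{\max}>0$ implies $P>0$.

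\emph{Step 2 (conclusion).} Rewrite
\[
P=2a(b+c)-a^2-(c-b)^2 .
\]
Then $P>0$ together with $b\le c$ gives
\[
(c-b)^2<2a(b+c)-a^2\le 4ac-a^2\le 4ac .
\]
Hence $\mu_n-\mu_i\le 2\lambda_1\lambda_n$, and dividing by $\lambda_n^2$ yields $1-(\lambda_i/\lambda_n)^2\le 2\lambda_1/\lambda_n$.

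\emph{Main obstacle.} The only real work is the explicit evaluation of $G$ at the maximizer $t=(a+b+c)/2$ and checking its sign structure. For $n\ge4$, cross terms between $t$ and $a,b,c$ appear. The key point is to verify that $G_{\max}$ depends on $(a,b,c)$ only through $P$, up to a positive factor such as $\tfrac{n-2}{4(n-1)}$ or similar.

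An alternative route, which I would try if the algebra becomes messy, uses the original parametrization $\mu_k=\sigma_1(\zeta)-\zeta_k$ with $\zeta\in\Gamma_2$. Then $\mu_n-\mu_i=\zeta_i-\zeta_n$, and the bound would follow from standard Gårding-cone inequalities for $\Gamma_2$.
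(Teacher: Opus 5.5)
Your proof is correct, and the algebra you flagged as the main obstacle works out. Write $G=\frac12\big(\frac{\sigma_1(\mu)^2}{n-1}-|\mu|^2\big)$, $S=a+b+c$ and $Q=a^2+b^2+c^2$. The critical-point equations read $(n-1)\mu_k=\sigma_1(\mu)$ for each free $k$, which forces all free coordinates to be equal, $t=S/2$. At that point $\sigma_1(\mu)=\frac{(n-1)S}{2}$ and $|\mu|^2=Q+\frac{(n-3)S^2}{4}$, so
\[
G_{\max}=\tfrac14\big(S^2-2Q\big)=\tfrac14\,P(a,b,c)
\]
for every $n\ge 3$. The factor is $\frac14$, independent of $n$. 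Hence $P\ge 4$, and your Step~2 even gives $(c-b)^2\le 4ac-a^2-4$.

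Two small corrections:
\begin{itemize}
\item $G$ is not concave in all $n$ variables. Its full Hessian $\frac1{n-1}(J_n-(n-1)I_n)$ has the positive eigenvalue $\frac1{n-1}$ along $(1,\dots,1)$, so concavity holds only on proper subsets of the variables. For your $n-3$ free variables the eigenvalues are $-\frac{2}{n-1}$ and $-1$, so the critical point is indeed the global maximum.
\item The case $i=n$ should be handled separately, since you cannot then freeze three distinct indices. It is trivial.
\end{itemize}

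The paper argues differently and avoids any optimization. It separates $\mu_1$, sets $\alpha=\sum_{j\ge2}\mu_j$, and rewrites the hyperboloid equation as an identity for the spread of $(\mu_2,\dots,\mu_n)$ about their mean:
\[
\sum_{j=2}^n\Big(\mu_j-\frac{\alpha}{n-1}\Big)^2=\frac{2\alpha\mu_1}{n-1}-\frac{n-2}{n-1}\mu_1^2-2\le 2\mu_1\mu_n .
\]
It then keeps only the terms $j=i$ and $j=n$ on the left, which together are at least $\frac12(\mu_n-\mu_i)^2$. This needs nothing beyond $\alpha\le(n-1)\mu_n$ and an elementary quadratic inequality.

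Your route instead mirrors the proof of Lemma~\ref{lema1}: freezing two coordinates gives $G_{\max}=\mu_1\mu_2$ there, and freezing three gives $G_{\max}=\frac14P$ here. So you treat both eigenvalue lemmas by one mechanism and get an explicit three-variable consequence of the constraint. The cost is the critical-point and Hessian computation, which the paper's direct argument avoids.
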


\begin{proof}
 The case $i=n$ is trivial, so let us assume $2\leq i< n$. For $\mu_j=\lambda_j^2$, define the quantities
 $$
 \alpha:= \sum_{j=2}^n \mu_j, \quad \beta := \sum_{j=2}^n \mu_j^2, \quad \gamma := \frac{1}{n-1}\Big(\beta - \frac{1}{n-1} \alpha^2\Big).
 $$
 Note that $\gamma\geq 0$ by the Cauchy-Schwarz inequality. By \eqref{eq:classA2}, we have
 $$
 \frac{1}{n-1} (\mu_1 + \alpha)^2 - \mu_1^2 - \beta =2,
 $$
 which is equivalent to
 $$
 \beta-\frac{1}{n-1} \alpha^2 = \frac{2 \alpha \mu_1}{n-1} - \frac{n-2}{n-1}\mu_1^2 -2.
 $$
 Since $\alpha\leq (n-1)\mu_n$, and the other two terms are negative, it follows that
 \begin{equation*}
 (n-1) \gamma \leq \frac{2 \alpha \mu_1}{n-1} \leq 2 \mu_1 \mu_n.
 \end{equation*}
 On the other hand, for all $2\leq i< n$, we have 
 \begin{equation*} 
 (n-1)\gamma =\sum_{j=2}^n \Big(\mu_j - \frac{\alpha}{n-1}\Big)^2 \geq \Big(\mu_n - \frac{\alpha}{n-1}\Big)^2 + \Big(\mu_i - \frac{\alpha}{n-1}\Big)^2\geq \frac{1}{2}(\mu_n-\mu_i)^2,
 \end{equation*}
where in the last inequality, we have used that $\big(\frac{a+b}{2}\big)^2\leq \frac{a^2}{2}+\frac{b^2}{2}$ for all $a,b\in\mathbb{R}$.
 Hence, combining both inequalities, we get
 $$
 (\mu_n - \mu_i)^2 \leq 4 \mu_1 \mu_n.
 $$
 Dividing by $\mu_n^2$, it follows that
 $$
 \Big(1 - \frac{\mu_i}{\mu_n}\Big)^2 \leq 4 \Big( \frac{\mu_1}{\mu_n}\Big),
 $$
 which, taking the square root, gives the result.
\end{proof}

From the previous lemmas, we deduce the asymptotic behavior of the eigenvalues of the matrices $A$ in the class $\A_2$. There are only two possible degeneracies: either one eigenvalue is small and the rest are comparably large, or all eigenvalues are large.

\begin{cor}[Degeneracies in $\A_2$] \label{cor:degeneracies}
Let $\{A^{(k)}\}_{k\in\mathbb{N}}$ be a sequence of matrices in $\A_2$. For each fixed $k,$ let
$\lambda_1^{(k)}\leq\lambda_2^{(k)}\leq\ldots\leq\lambda_n^{(k)}$ be the eigenvalues of $A^{(k)}$.
Then, the following hold:
\begin{enumerate}[(i)]
 \item If
 $\lambda_1^{(k)} \to 0$ as $k\to\infty$, then $\lambda_j^{(k)} \to \infty$ as $k\to\infty$ for all $2\leq j\leq n$, all at the same rate, in the sense that ${\lambda_j^{(k)}}/{\lambda_{n}^{(k)}} \to 1$ as $k\to\infty$.

 \item If $\lambda_n^{(k)} \to \infty$ as $k\to\infty$, then: 
 
 \begin{enumerate}[(a)]
 \item 
 either $\lambda_{1}^{(k)}$ remains bounded and $\lambda_j^{(k)}\to\infty$ 
 as $k\to\infty$ for all $2\leq j\leq n$;
 \item or $\lambda_j^{(k)}\to \infty$ as $k\to\infty$ for all $j$.
 \end{enumerate} 
\end{enumerate}
\end{cor}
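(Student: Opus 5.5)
The corollary will follow directly from Lemma~\ref{lema1} and Lemma~\ref{lema2}, applied to each $A^{(k)}$ and then passed to the limit; no new estimates are required.

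\emph{Part (i).} Suppose $\lambda_1^{(k)}\to0$. By Lemma~\ref{lema1}, $\lambda_1^{(k)}\lambda_j^{(k)}\ge1$ for every $j\ge2$. Hence $\lambda_j^{(k)}\ge 1/\lambda_1^{(k)}\to\infty$ for all $2\le j\le n$. For the common rate, apply Lemma~\ref{lema2}:
\[
1-\big(\lambda_j^{(k)}/\lambda_n^{(k)}\big)^2\le 2\,\lambda_1^{(k)}/\lambda_n^{(k)}.
\]
Here $\lambda_1^{(k)}\to0$ and $\lambda_n^{(k)}\to\infty$, so the right-hand side tends to $0$. Since also $\lambda_j^{(k)}/\lambda_n^{(k)}\le1$, we conclude $\lambda_j^{(k)}/\lambda_n^{(k)}\to1$.

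\emph{Part (ii).} Suppose $\lambda_n^{(k)}\to\infty$. The alternative between (a) and (b) should be read along subsequences: either $\lambda_1^{(k)}$ stays bounded along a subsequence, or $\lambda_1^{(k)}\to\infty$. In case (b), $\lambda_1^{(k)}\to\infty$ and the ordering $\lambda_1\le\lambda_j$ gives $\lambda_j^{(k)}\to\infty$ for all $j$. In case (a), suppose $\lambda_1^{(k)}\le C$. Lemma~\ref{lema2} gives
\[
\big(\lambda_j^{(k)}/\lambda_n^{(k)}\big)^2\ge 1-2C/\lambda_n^{(k)}\to1.
\]
Thus $\lambda_j^{(k)}/\lambda_n^{(k)}\to1$, and in particular $\lambda_j^{(k)}\ge \tfrac12\lambda_n^{(k)}\to\infty$ for $k$ large, for each $2\le j\le n$.

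The only delicate point is the bookkeeping in (ii). A general sequence may oscillate between the two regimes, so one should pass to subsequences, or state the dichotomy in terms of $\limsup\lambda_1^{(k)}<\infty$ versus $\lambda_1^{(k)}\to\infty$. A cleaner alternative avoids subsequences: whenever $\lambda_1^{(k)}\le\lambda_n^{(k)}/4$, Lemma~\ref{lema2} gives $\lambda_j^{(k)}\ge\lambda_n^{(k)}/\sqrt2$. Otherwise $\lambda_j^{(k)}\ge\lambda_1^{(k)}>\lambda_n^{(k)}/4$. In either case $\lambda_j^{(k)}\ge\lambda_n^{(k)}/4\to\infty$ for all $j\ge2$, which is all that the corollary needs.
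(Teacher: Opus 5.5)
Your proposal is correct and follows the paper's proof. Part (i) is Lemma~\ref{lema1} followed by Lemma~\ref{lema2}, exactly as in the paper, and part (ii) also rests on Lemma~\ref{lema2}; the only difference is that the paper argues by contradiction assuming $\lambda_1^{(k)}\le\lambda_2^{(k)}\le C$, whereas your direct case split gives $\lambda_j^{(k)}\ge\lambda_n^{(k)}/4$ for all $j\ge2$, which is slightly tidier and also handles the subsequence bookkeeping that the paper's ``only $\lambda_1^{(k)}$ can stay bounded'' leaves implicit.
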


\begin{proof}
If $\lambda_1^{(k)}\to0$, Lemma~\ref{lema1} forces $\lambda_j^{(k)}\to\infty$ for all $2\leq j\leq n$, and then Lemma~\ref{lema2} gives ${\lambda_j^{(k)}}/{\lambda_{n}^{(k)}} \to 1$, proving $(i)$.
If $\lambda_n^{(k)}\to\infty$, as $k\to\infty$, and there is $C$ independent of $k$ such that $\lambda_{1}^{(k)}\leq\lambda_2^{(k)}\leq C$, then Lemma~\ref{lema2} gives 
\[
 1-\bigg(\frac{C}{\lambda_{n}^{(k)}}\bigg)^2\leq
 1- \bigg(\frac{\lambda_2^{(k)}}{\lambda_{n}^{(k)}}\bigg)^2 \leq \frac{2C}{\lambda_{n}^{(k)}},
\]
a contradiction.
Therefore, only $\lambda_{1}^{(k)}$ can stay bounded, which proves $(ii)$.
\end{proof}

One can also characterize the degeneracy cases for matrices in $\A_2$ in terms of the Frobenius~norm.
Given $\Lambda\geq 1$, we recall the set $\mathcal{N}(\Lambda)$, defined in \eqref{eq:nondeg}, of nondegenerate symmetric matrices with respect to $\Lambda$.

\begin{lem}
[Degeneracies in $\A_2$ in terms of the Frobenius norm]
\label{lem:frob.degen}
Let $\{A^{(k)}\}_{k\in\mathbb{N}}$ be a sequence of matrices in $\A_2$. For each fixed $k,$ let
$\lambda_1^{(k)}\leq\lambda_2^{(k)}\leq\ldots\leq\lambda_n^{(k)}$ be the eigenvalues of $A^{(k)}$. Then, the following are equivalent:
\vspace{+2pt}
\begin{enumerate}\itemsep3pt
 \item[(i)] $\displaystyle
\|A^{(k)}\|_F^2$ remains bounded as $k\to\infty.$
\item[(ii)] $\displaystyle\{A^{(k)}\}_{k\in\mathbb{N}}\subset\mathcal N(\Lambda)$ for some $\Lambda\geq1$.
\end{enumerate}
\vspace{+1pt}
Conversely, $\displaystyle\limsup_{k\to\infty}\|A^{(k)}\|_F^2=\infty$ if and only if $\{A^{(k)}\}_{k\in\mathbb{N}}$ has a subsequence degenerating in one of the ways described in Corollary~\ref{cor:degeneracies}.
\end{lem}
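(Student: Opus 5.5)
The plan is to prove the equivalence (i)$\Leftrightarrow$(ii) directly from the identity $\|A\|_F^2=\tr(A^2)=\sigma_1(\mu)=\sum_i\lambda_i^2$, combined with the lower bound $\lambda_1\lambda_2\ge 1$ of Lemma~\ref{lema1}. The converse statement then follows by contraposition together with Corollary~\ref{cor:degeneracies}.

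\emph{(ii)$\Rightarrow$(i).} If $A^{(k)}\in\mathcal N(\Lambda)$, then $\lambda_n^{(k)}\le\Lambda$. Hence $\|A^{(k)}\|_F^2=\sum_i(\lambda_i^{(k)})^2\le n\Lambda^2$, which is bounded.

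\emph{(i)$\Rightarrow$(ii).} Suppose $\|A^{(k)}\|_F^2\le M$ for all $k$. For the upper bound, $(\lambda_n^{(k)})^2\le\sum_i(\lambda_i^{(k)})^2\le M$, so $\lambda_n^{(k)}\le\sqrt M$. For the lower bound, Lemma~\ref{lema1} gives $\lambda_1^{(k)}\lambda_2^{(k)}\ge1$, so $\lambda_1^{(k)}\ge 1/\lambda_2^{(k)}\ge 1/\lambda_n^{(k)}\ge M^{-1/2}$. Taking $\Lambda:=\max\{1,\sqrt M\}$ yields $\Lambda^{-1}I\le A^{(k)}\le\Lambda I$ for all $k$, that is, $\{A^{(k)}\}\subset\mathcal N(\Lambda)$.

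\emph{Converse statement.} First assume $\limsup_k\|A^{(k)}\|_F^2=\infty$. Pass to a subsequence with $\|A^{(k)}\|_F\to\infty$. Since $\|A^{(k)}\|_F^2\le n(\lambda_n^{(k)})^2$, we get $\lambda_n^{(k)}\to\infty$, and we are in case (ii) of Corollary~\ref{cor:degeneracies}. Passing to a further subsequence, either $\lambda_1^{(k)}\to0$, which is case (i), or $\lambda_1^{(k)}$ stays bounded, which is (ii)(a), or $\lambda_1^{(k)}\to\infty$, which is (ii)(b). This is the only mildly delicate point: we must extract a subsequence so that $\lambda_1^{(k)}$ converges in $[0,\infty]$, so that one of the listed alternatives holds along it.

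Conversely, suppose a subsequence degenerates as in Corollary~\ref{cor:degeneracies}. In every listed case $\lambda_n^{(k)}\to\infty$ along that subsequence; in case (i) this is part of the conclusion of the corollary. Then $\|A^{(k)}\|_F^2\ge(\lambda_n^{(k)})^2\to\infty$, so the $\limsup$ is infinite.

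There is no real obstacle here. The only nontrivial input is the lower eigenvalue bound coming from Lemma~\ref{lema1}, which is what prevents a small $\lambda_1$ while the Frobenius norm stays bounded.
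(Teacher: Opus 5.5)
Your proposal is correct and follows the paper's proof: both rest on the two-sided bound $(\lambda_n^{(k)})^2\leq\tr((A^{(k)})^2)\leq n(\lambda_n^{(k)})^2$ together with $\lambda_1^{(k)}\geq 1/\lambda_n^{(k)}$ from Lemma~\ref{lema1}. The only difference is that you write out the converse statement, including extracting a subsequence along which $\lambda_1^{(k)}$ converges in $[0,\infty]$; the paper leaves this implicit when it says the result follows from that estimate.
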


\begin{proof}
The result follows from the estimate
\begin{equation}\label{estima.for.est.coro.rates}
(\lambda_n^{(k)})^2\leq\tr((A^{(k)})^2)\leq n(\lambda_n^{(k)})^2.
\end{equation}
Indeed, if $\{A^{(k)}\}\subset\mathcal N(\Lambda)$, then trivially $\tr((A^{(k)})^2)\leq n\Lambda^2$.
On the other hand, assuming $\tr((A^{(k)})^2)\leq C$ independently of $k$, we have $\lambda_n^{(k)}\leq\sqrt C$. 
Then, by Lemma~\ref{lema1},
$$
\lambda_1^{(k)}\geq \frac{1}{\lambda_n^{(k)}}\geq\frac{1}{\sqrt C},
$$ 
and, therefore, $A^{(k)}\in\mathcal N(\sqrt C)$.
\end{proof}

\subsection{The asymptotic cone}
There is a nice geometric interpretation of the degenerate cases described above: 
The hyperboloid \eqref{eq1} is asymptotic to the cone
\begin{equation}\label{eq:asymp.cone}
\frac{1}{n-1}\,\sigma_1(\mu)^2=|\mu|^2,
\end{equation}
and Lemma~\ref{lem:frob.degen} allows us to quantify the degeneracy of matrices $A\in\A_2$ in terms of the distance between the cross-sections of the cone and the hyperboloid, appropriately normalized. 

More precisely, given $A\in\A_2$, let us normalize it by
\begin{equation}\label{def.tilde.A.mu}
\widetilde A :=\frac{1}{\|A\|_F}
A,\quad\textrm{or, equivalently,}\quad\widetilde{\mu}=\frac{\mu}{\sigma_1(\mu)},
\end{equation}
so that $\|\widetilde{A}\|_F^2=\tr(\widetilde A^2)=\sigma_1(\widetilde\mu)=1$. 
Moreover, the eigenvalues $\widetilde \mu\in (0,1]^n$ of $\widetilde A^2$ satisfy the normalized equation
\begin{equation}\label{normalized.eq.tilde.mu}
|\widetilde \mu|^2 = \frac{1}{n-1}- \frac{2}{\|{A}\|_F^4}.
\end{equation}

Both the hyperboloid and the cone are symmetric about the axis $\{ t (1, \hdots, 1) : t\in \R\}$,
and the level sets of $\sigma_1(\mu)$ are hyperplanes orthogonal to it.
Then, the normalization~\eqref{def.tilde.A.mu} can be seen as a central projection from the origin onto the hyperplane $\{\sigma_1(\mu)=1\}$.
It acts on each hyperplane  $\{\sigma_1(\mu)=t\}$ as the homothety  $\mu\mapsto\widetilde \mu=\mu/t$.

As it turns out, the normalized sections of the hyperboloid are $(n-2)$-dimensional spheres inside the hyperplane $\{\sigma_1(\mu)=1\}$, centered at
\begin{equation}\label{def.bar.c}
\overline{c}:=\big(1/n,\dots,1/n\big), 
\end{equation}
 and increasing to a fixed sphere given by the projection, or normalized section, of the cone (which is invariant under these homotheties). Let us denote the respective radii by
\begin{equation}\label{definition-r}
\widetilde r(t):=\Big(\frac{1}{n(n-1)}-\frac{2}{t^2}\Big)^{1/2}
\quad\ \textrm{and}\qquad
\overline r_\infty:=\Big(\frac{1}{n(n-1)}\Big)^{1/2},
\end{equation}
and observe that $\widetilde r(t)$ is increasing and converges to  $\overline r_\infty$ as $t\to\infty$, see Figure~\ref{fig:projection}.

\begin{figure}[t]
 \centering
 \includegraphics[width=0.7\linewidth]{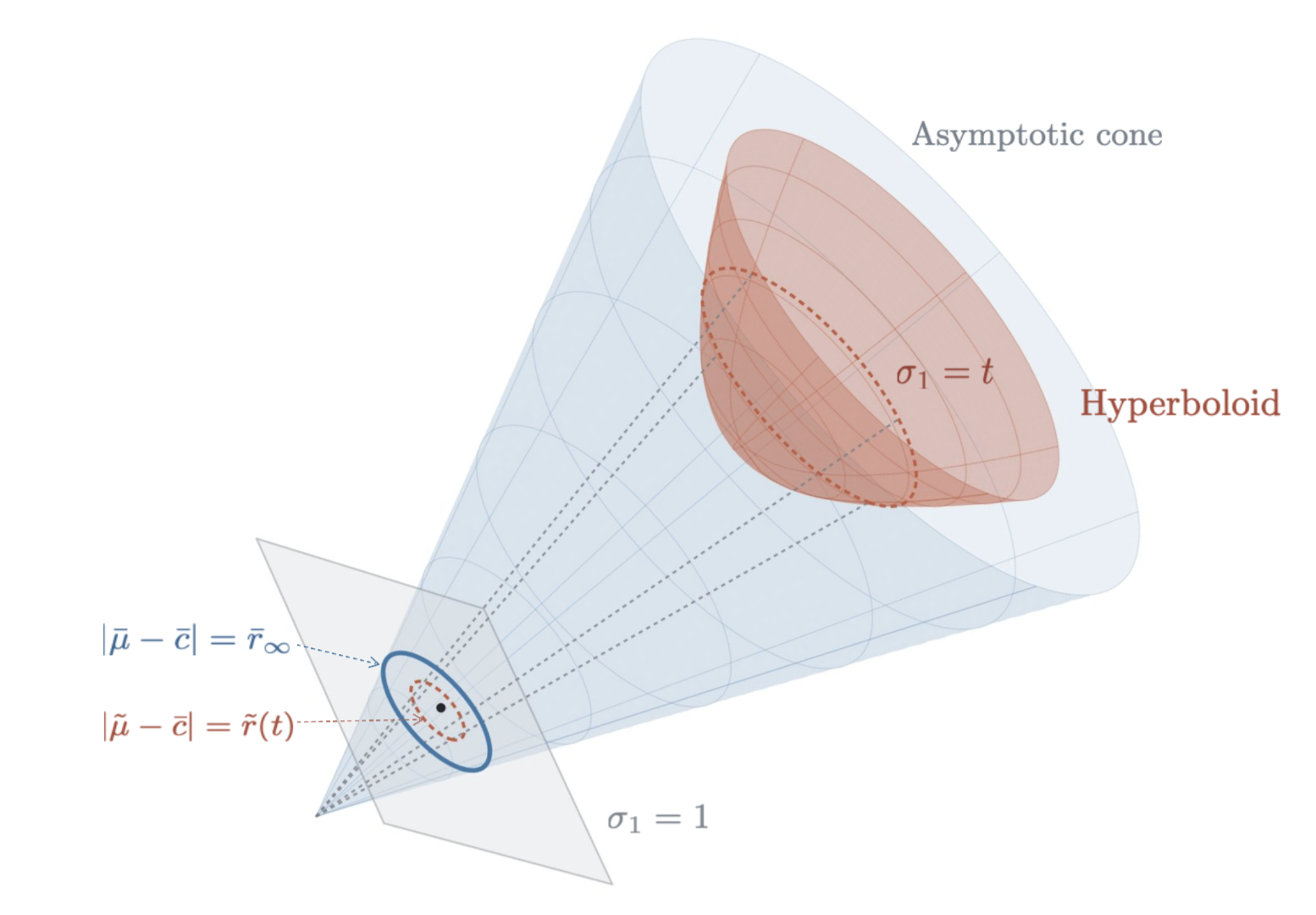}
 \caption{The asymptotic cone and the normalized sections.}
 \label{fig:projection}
 \end{figure}

We have the following.

\begin{lem}[Convergence of normalized sections to a limiting sphere]\label{lem:sections}
Given $A\in\A_2$, define $\widetilde{\mu}$ as in~\eqref{def.tilde.A.mu}. Then:
\begin{enumerate}\itemsep3pt
 \item[(i)] It holds that
\begin{equation}\label{norm.section.hyper}
\sigma_1(\widetilde\mu)=1, 
\qquad 
|\widetilde\mu-\overline c|=\widetilde{r}\big(\|A\|_F^2\big),
\end{equation}
with
\begin{equation}\label{eq:radialgap}
0\leq\overline r_\infty-\widetilde{r}\big(\|A\|_F^2\big)
\leq\frac{2\sqrt{n(n-1)}}{\|A\|_F^4}.
\end{equation}

\item[(ii)] In particular, if $\{A^{(k)}\}_{k\in\mathbb{N}}\subset\A_2$ is a degenerating sequence such that
$\|A^{(k)}\|_F\to\infty$, then the $\widetilde\mu^{(k)}$ accumulate on the $(n-2)$-dimensional sphere
\begin{equation}\label{norm.section.hyper.cone}
 \sigma_1(\overline\mu)=1,
 \qquad
|\overline\mu-\overline c|=\overline r_\infty,
\end{equation}
which is the normalized section of the asymptotic cone \eqref{eq:asymp.cone}. 
Equivalently, one can write the asymptotic sphere \eqref{norm.section.hyper.cone} as
\begin{equation}\label{alternative.asymp.esphere.def}
\sigma_1(\overline\mu)=1,\qquad
|\overline\mu|^2=\tfrac{1}{n-1}.
\end{equation}

\item[(iii)] 
Moreover, every point $\overline\mu$ in \eqref{norm.section.hyper.cone} satisfies $\overline\mu_i\in\big[0,\tfrac2n\big]$ for all $i=1,\ldots,n$; specifically, the limiting sphere is tangent to each coordinate hyperplane $\{\overline\mu_i=0\}$, and $\overline\mu$ has a zero coordinate if and only if it is one of the $n$ coordinate permutations of $\tfrac{1}{n-1}\,(0,1,\dots,1)$.
\end{enumerate}
\end{lem}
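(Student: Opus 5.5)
The argument reduces to elementary algebra on the normalized equation \eqref{normalized.eq.tilde.mu}, together with the fact that the level hyperplanes of $\sigma_1$ are orthogonal to the axis.

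For (i), $\sigma_1(\widetilde\mu)=1$ holds by construction. Since $\overline c$ is the orthogonal projection of the origin onto $\{\sigma_1=1\}$, Pythagoras gives
\[
|\widetilde\mu-\overline c|^2=|\widetilde\mu|^2-2\langle\widetilde\mu,\overline c\rangle+|\overline c|^2=|\widetilde\mu|^2-\tfrac{2}{n}+\tfrac1n=|\widetilde\mu|^2-\tfrac1n .
\]
Substituting \eqref{normalized.eq.tilde.mu} yields $\frac{1}{n-1}-\frac1n-\frac{2}{\|A\|_F^4}=\frac{1}{n(n-1)}-\frac{2}{\|A\|_F^4}=\widetilde r(\|A\|_F^2)^2$. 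This expression is nonnegative by Lemma~\ref{lem:trace}, since $\|A\|_F^4\ge m_*^2=2n(n-1)$.

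For the gap \eqref{eq:radialgap}, write $a=\overline r_\infty$ and $b=\widetilde r$. Then $a-b=\frac{a^2-b^2}{a+b}\le \frac{a^2-b^2}{a}$, and $a^2-b^2=2/\|A\|_F^4$ while $1/a=\sqrt{n(n-1)}$. This gives exactly $2\sqrt{n(n-1)}/\|A\|_F^4$, and nonnegativity is immediate.

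For (ii), the points $\widetilde\mu^{(k)}$ lie in the compact set $[0,1]^n$ (or simply on bounded spheres), so limit points exist. By (i), any limit $\overline\mu$ satisfies $\sigma_1(\overline\mu)=1$ and $|\overline\mu-\overline c|=\lim\widetilde r=\overline r_\infty$. The equivalent form \eqref{alternative.asymp.esphere.def} follows from the identity above, $|\overline\mu-\overline c|^2=|\overline\mu|^2-\frac1n$, with $\frac1n+\frac1{n(n-1)}=\frac1{n-1}$. That this is the normalized section of \eqref{eq:asymp.cone} follows from dividing the cone equation by $\sigma_1(\mu)^2$.

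Part (iii) is the only step that needs an actual optimization. Since each $\widetilde\mu_i>0$, limits have $\overline\mu_i\ge0$. To bound a coordinate on the sphere, take $\overline\mu_1$ without loss of generality. On $\{\sigma_1=1\}$, the extreme values of the linear functional $\overline\mu_1$ over the sphere centered at $\overline c$ with radius $\overline r_\infty$ are $\frac1n\pm\overline r_\infty|P e_1|$. Here $Pe_1=e_1-\overline c$ is the projection of $e_1$ onto the hyperplane direction, with $|Pe_1|=\sqrt{(n-1)/n}$. Hence the extremes are $\frac1n\pm\frac1{\sqrt{n(n-1)}}\sqrt{\frac{n-1}{n}}=\frac1n\pm\frac1n$, i.e.\ $0$ and $\frac2n$. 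The minimum $0$ is attained uniquely at $\overline c-\overline r_\infty Pe_1/|Pe_1|=\overline c-\frac1n(e_1-\overline c)\cdot\frac{n}{n-1}$, which one checks equals $\frac1{n-1}(0,1,\dots,1)$. Uniqueness of the minimizer of a linear functional on a round sphere gives both tangency to $\{\overline\mu_1=0\}$ and the "if and only if" statement, and permutation symmetry handles the other coordinates. The main thing to be careful about is getting this projection computation right; the rest is routine.
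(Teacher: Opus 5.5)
Your proof is correct and follows the paper's argument essentially step by step: the same expansion $|\widetilde\mu-\overline c|^2=|\widetilde\mu|^2-\tfrac1n$ combined with \eqref{normalized.eq.tilde.mu}, the same difference-of-squares bound for the radial gap, the same limit passage for (ii), and in (iii) your extremization of the linear functional $\overline\mu_1$ over the sphere via $Pe_1=e_1-\overline c$ is exactly the paper's Cauchy--Schwarz estimate $|\overline\mu_i-\tfrac1n|=|\langle\overline\mu-\overline c,e_i-\overline c\rangle|\le\overline r_\infty|e_i-\overline c|$ together with its equality case. Your remark that limit points have $\overline\mu_i\ge0$ is superfluous for (iii), since the extremization already gives $\overline\mu_i\in[0,\tfrac2n]$ at every point of the sphere, not only at limits.
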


\begin{proof}
Using $\langle\widetilde\mu,\overline c\rangle=\tfrac1n\sigma_1(\widetilde\mu) =\tfrac1n=|\overline c|^2$ and \eqref{normalized.eq.tilde.mu}, we have
\[
|\widetilde\mu-\overline c|^2 = |\widetilde\mu|^2 + |\overline c|^2 - 2\langle\widetilde\mu,\overline c\rangle
= \frac{1}{n-1}- \frac{2}{\|A\|_F^4} -\frac1n =\widetilde{r}\big(\|A\|_F^2\big)^2,
\]
which yields \eqref{norm.section.hyper}. To prove \eqref{eq:radialgap}, notice that
\[
0\leq\overline r_\infty-\widetilde{r}\big(\|A\|_F^2\big)
=\frac{\overline r_\infty^2-\widetilde{r}\big(\|A\|_F^2\big)^2}{\overline r_\infty+\widetilde{r}\big(\|A\|_F^2\big)}
\leq\frac{2}{\overline r_\infty\,\|A\|_F^{4}}
=\frac{2\sqrt{n(n-1)}}{\|A\|_F^{4}},
\]
thus completing the proof of \emph{(i)}. 
Regarding \emph{(ii)}, for any converging subsequence $\widetilde\mu^{(k)}\to\overline \mu$ as $k\to\infty$, we can 
take limits in~\eqref{norm.section.hyper} using \eqref{eq:radialgap}, to show that $\overline \mu$ satisfies \eqref{norm.section.hyper.cone}, as desired.

Let us show \emph{(iii)}.  Clearly, any permutation of $\tfrac{1}{n-1}(0,1,\dots,1)$ satisfies \eqref{norm.section.hyper.cone}. To prove the converse,
assume $\overline\mu$ satisfies \eqref{norm.section.hyper.cone}, and observe that $\sigma_1(\overline\mu-\overline c)=0$. In particular,
\[
\overline\mu_i-\frac1n = \langle \overline\mu-\overline c , e_i-\overline c \rangle
\]
for all $i=1,\ldots,n,$ and the Cauchy--Schwarz inequality gives
\[
\Big|\overline\mu_i-\frac1n\Big|= |\langle \overline\mu-\overline c , e_i-\overline c \rangle|
\leq |\overline\mu-\overline c|\cdot|e_i-\overline c|
\leq\overline r_\infty\sqrt{\frac{n-1}{n}}=\frac1n.
\]
We deduce that $\overline\mu_i\in[0,\tfrac2n]$ for all $i=1,\ldots ,n$. Moreover, if $\overline\mu_j=0$ for some $j$, then equality holds in the Cauchy--Schwarz inequality, which forces $\overline\mu-\overline c = \frac{1}{n-1}(\overline c-e_j),$
and, from there
\[
\overline\mu=\frac{n}{n-1}\,\overline c-\frac{1}{n-1}\,e_j=\frac{1}{n-1}\sum_{i\neq j} e_i,
\]
which is the $j$-th coordinate permutation of $\tfrac{1}{n-1}(0,1,\dots,1)$, as desired.
In particular, for each $i$, the limiting sphere lies in the half-space $\{\overline\mu_i\geq0\}$ and meets the hyperplane $\{\overline\mu_i=0\}$ exactly at the $i$-th coordinate permutation of $\tfrac{1}{n-1}(0,1,\dots,1)$, which proves the tangency.
\end{proof}

\begin{rem}
The radius $\widetilde r(t)$ is well defined if and only if $t\geq m_*$, which, by Lemma~\ref{lem:trace}, is satisfied with $t=\|A\|_F^2$ for every $A\in\A_2$.
Thus the normalized sections in~\eqref{norm.section.hyper} are nonempty. Moreover, $\widetilde{r}(\|A\|_F^2)$ vanishes at the vertex matrix $A_*$,
the equality case of Lemma~\ref{lem:trace}.
For this matrix, the section of the hyperboloid reduces to the vertex $\mu_*=t_*\,(1,\dots,1)$ and, after normalization, to the point~$\overline c$.
\end{rem}

Finally, let us point out that Lemma \ref{lem:sections} provides a refinement of Corollary~\ref{cor:degeneracies}. 

\begin{cor}\label{coro.vanishing.rate}
For any $A\in\A_2$, we have the estimate
\begin{equation}\label{eigenvalues.converge.comparable}
\frac1n\bigg(\frac{\lambda_1}{\lambda_n}\bigg)^{2}
\leq \widetilde\mu_1 \leq \bigg(\frac{\lambda_1}{\lambda_n}\bigg)^{2}.
\end{equation}
 In particular, for any degenerating sequence $\{A^{(k)}\}_{k}\subset\A_2$ such that $\|A^{(k)}\|_F\to\infty$, 
the accumulation points of $\{\widetilde\mu^{(k)}\}$ have a vanishing first coordinate if and only if $\lambda_1^{(k)}/\lambda_n^{(k)}\to0$, and, in that case,
 $\widetilde\mu^{(k)}\to\tfrac{1}{n-1}\,(0,1,\dots,1)$.
\end{cor}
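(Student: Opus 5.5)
We need to prove Corollary \ref{coro.vanishing.rate}: $\frac1n(\lambda_1/\lambda_n)^2\le\widetilde\mu_1\le(\lambda_1/\lambda_n)^2$, together with the statement about accumulation points.

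\textbf{The two-sided estimate.} We start from the definition $\widetilde\mu_1=\mu_1/\sigma_1(\mu)=\lambda_1^2/\|A\|_F^2$ and use the elementary bound \eqref{estima.for.est.coro.rates}, namely $\lambda_n^2\le\|A\|_F^2\le n\lambda_n^2$.
\begin{itemize}
\item The lower bound on $\|A\|_F^2$ gives $\widetilde\mu_1\le\lambda_1^2/\lambda_n^2$.
\item The upper bound gives $\widetilde\mu_1\ge\frac1n\lambda_1^2/\lambda_n^2$.
\end{itemize}
This proves \eqref{eigenvalues.converge.comparable}. Nothing from the hyperboloid structure is needed here.

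\textbf{Setting up the asymptotic statement.} Let $\|A^{(k)}\|_F\to\infty$. Since $\widetilde\mu^{(k)}\in[0,1]^n$, by compactness accumulation points exist. By Lemma \ref{lem:sections}(ii), each accumulation point $\overline\mu$ lies on the limiting sphere \eqref{norm.section.hyper.cone}. Estimate \eqref{eigenvalues.converge.comparable} shows that $\widetilde\mu^{(k)}_1\to0$ along a subsequence if and only if $\lambda_1^{(k)}/\lambda_n^{(k)}\to0$ along it. Hence:
\begin{itemize}
\item if the full ratio tends to $0$, every accumulation point has $\overline\mu_1=0$;
\item conversely, if every accumulation point has vanishing first coordinate, then $\widetilde\mu^{(k)}_1\to0$ (all subsequential limits are $0$ in a bounded sequence), so the ratio tends to $0$.
\end{itemize}

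\textbf{Identifying the limit.} In the vanishing case, Lemma \ref{lem:sections}(iii) shows that the only point of the limiting sphere with $\overline\mu_1=0$ is $\tfrac1{n-1}(0,1,\dots,1)$. So this is the unique accumulation point, and boundedness gives full convergence $\widetilde\mu^{(k)}\to\tfrac1{n-1}(0,1,\dots,1)$.

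The ordering of the eigenvalues is consistent here: $\widetilde\mu_1$ is the smallest coordinate, so the vanishing coordinate must be the first.

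\textbf{Main obstacle.} The only step needing care is the quantifier handling between "accumulation points have vanishing first coordinate" and convergence of the full sequence. It is resolved by the subsequence-of-subsequence argument above.
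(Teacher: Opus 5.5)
Your proposal is correct and follows essentially the paper's route: the two-sided bound comes directly from $\lambda_n^2\leq\|A\|_F^2\leq n\lambda_n^2$ in \eqref{estima.for.est.coro.rates}, and the asymptotic statement follows from Lemma~\ref{lem:sections}(ii)--(iii). The paper states only the first step explicitly; your subsequence argument correctly supplies the details it leaves implicit.
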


\begin{proof}
 Estimate \eqref{eigenvalues.converge.comparable} follows easily from 
\eqref{estima.for.est.coro.rates}. 
\end{proof}

According to Corollary \ref{coro.vanishing.rate}, $\widetilde\mu_1^{(k)}\to0$ as $k\to\infty$ whenever the smallest eigenvalue $\lambda_1^{(k)}$ diverges more slowly than the largest one, $\lambda_n^{(k)}$, which extends the cases of a vanishing or bounded first eigenvalue, considered in Corollary \ref{cor:degeneracies}. 
Conversely, \eqref{eigenvalues.converge.comparable} also implies that accumulation points $\overline\mu$ with all nonzero entries must occur when all the eigenvalues of $A^{(k)}$ diverge at a comparable rate.

\section{The idea in the local case} \label{sec:idealocal}

To illustrate the strategy of the proof of our main result, Theorem \ref{thm:main}, we first explain the ideas in the local setting $s=1$, where they are more transparent.
In the local case, semiconcavity plays the role of the nonlocal semiconcavity hypothesis \eqref{NSC}, so let us assume $u \in C^2(\Omega)$ is a semiconcave function with constant $K$, i.e., such that $D^2u(x) \leq KI$ for all $x\in \Omega$. 
Furthermore, assume that for some $\eta_0>0$,
 \begin{equation} \label{eq:assumloc}
 2\sigma_2(D^2 u(x))^\frac{1}{2} = \inf_{A\in \A_2} \tr(A D^2 u(x) A)
 \geq \eta_0, \quad \text{ for all } x\in \Omega.
 \end{equation}
 Our goal is to establish that degenerate matrices do not count for the infimum,
 that is, we intend to show that there is a constant $\Lambda\geq 1$, depending only on $n$, $K$, and $\eta_0$, such that
 \begin{equation} \label{eq:localUE}
 2\sigma_2(D^2 u(x) )^\frac{1}{2}= \inf_{A\in \A_2\cap\mathcal{N}(\Lambda)} \tr(A D^2 u(x) A),
 \end{equation}
 where $\mathcal{N}(\Lambda)$ denotes the class of nondegenerate matrices introduced in \eqref{eq:nondeg}.

 \begin{rem}\label{remark.compatibility.local}
 The hypotheses force a compatibility condition of the form $\eta_0\leq C(n) K$ among the data.
 More precisely, taking $A_{*}$ as in \eqref{def.A.star}, and using the semiconcavity of $u$, 
\begin{equation}\label{calculation.compatibility.local}
 \begin{split}
 \eta_0\leq
 \inf_{A\in \A_2} \tr(A D^2 u(x) A) &\leq \tr(A_* K A_*)
 =m_*K.
 \end{split}
 \end{equation} 
The local compatibility condition is sharp, since for $u(x)=\tfrac{K}{2}|x|^2$ it is easy to see that $2\sigma_2^{1/2}(D^2u)=m_*K$, and equality holds throughout. 
\end{rem}

The key step in the proof is the following estimate: There exists  $\gamma_0^{\rm loc}>0$ depending only on $n$, $\eta_0$, and $K$ such that
\begin{equation} \label{eq:keyestloc}
\tr(AD^2 u(x) A) \geq \gamma_0^{\rm loc} \tr(A^2), \quad \text{ for all } A\in \A_2\text{ and all } x\in \Omega. 
\end{equation}
 Since degenerate matrices have large trace, \eqref{eq:keyestloc} shows that the corresponding linear operators $\mathcal{L}_A u=\tr(AD^2 u A)$ take large values and do not contribute to the infimum, proving~\eqref{eq:localUE}.

Note that under a strict convexity assumption (i.e., if $D^2u(x)\ge \gamma_0^{\rm loc} I$ for some $\gamma_0^{\rm loc}>0$), estimate \eqref{eq:keyestloc} follows trivially. However, in our case we only know that the eigenvalues of $D^2u(x)$ belong to the admissibility cone $\Gamma_2$, which poses a major difficulty, since $D^2u(x)$ could have negative eigenvalues, making $\tr(AD^2 u(x) A)$ small even if $\tr(A^2)$ is large.
The key point in the sequel is that the possible degeneracies of the matrices in $\mathcal A_2$ are rigid enough that the contribution of these negative directions is compensated by the remaining ones.

\subsection{Strategy} \label{sec:strategy}
To prove estimate \eqref{eq:keyestloc}, let $A\in\mathcal A_2$, and normalize it by its Frobenius norm $\|A\|_F=
\sqrt{\tr(A^2)}$
as in \eqref{def.tilde.A.mu}. 
In particular, $\widetilde A$ has positive eigenvalues bounded~by~1,
and the eigenvalues $\widetilde \mu\in (0,1]^n$ of $\widetilde A^2$ satisfy the normalized equation \eqref{normalized.eq.tilde.mu}.
If the normalized matrix $\widetilde A$ belonged to the class $\A_2$, then \eqref{eq:keyestloc} would readily follow from \eqref{eq:assumloc}. 
However, by Lemma~\ref{lem:trace}, no matrix with unit Frobenius norm belongs to $\A_2$, and we cannot apply \eqref{eq:assumloc} directly to $\widetilde A$.

Instead, we proceed by a perturbation argument.
The idea is to exploit the axial symmetry of the hyperboloid and its asymptotic cone to find a perturbation of $\widetilde A^2$ in the class $\A_2$.
Indeed, given $A\in\A_2$ and its normalization $\widetilde A$, we define $\overline{A^2}$ as the radial projection of $\widetilde{A}^2$ onto the asymptotic sphere~\eqref{norm.section.hyper.cone}. Then, we can set up the perturbation in the 2-dimensional plane spanned by $\overline{A^2}$ and the identity matrix (which spans the axial direction), see Figure \ref{fig.lateral.section}.
 In the next lemma, we present the definition and key properties of $\overline{A^2}$. In particular, we will show that it is nonnegative definite, which justifies the square in the notation $\overline{A^2}$.

 \begin{figure}[t]
 \centering
 \includegraphics[width=0.8\linewidth]{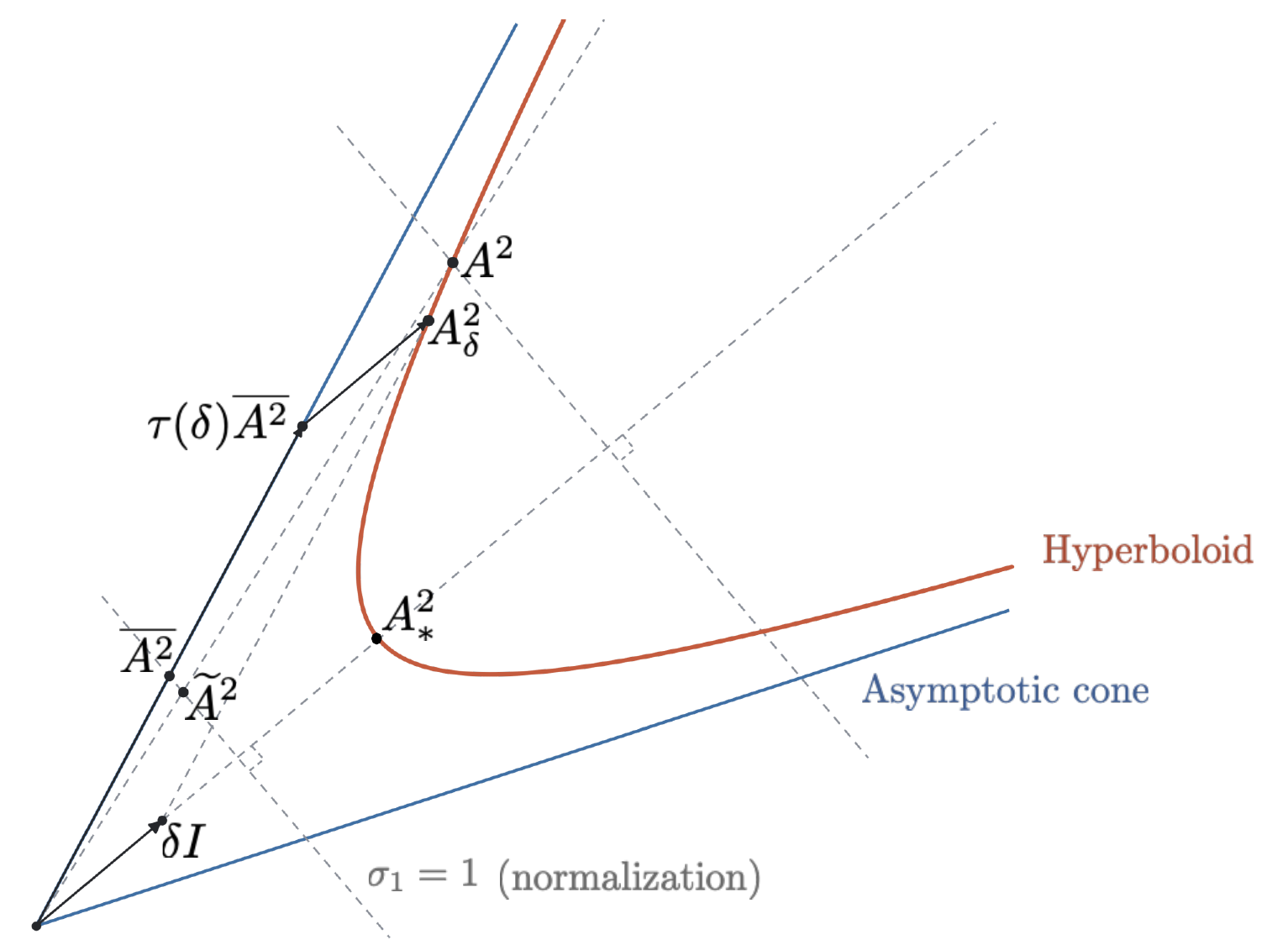}
 \caption{Lateral section showing the construction of the matrix $A_{\delta}$ in \eqref{def.tau.delta}. }
 \label{fig.lateral.section}
 \end{figure}

\begin{lem}\label{lem:perturbation.matrix.and.closeness}
Let $A\in\A_2$ with $\|A\|_F^2=\tr(A^2) > m_*$. Define
\begin{equation}\label{eq:perturbation.and.radius}
\overline{A^2}:=b(A)\,\widetilde A^2+\frac1n \big(1-b(A)\big)\,I,
\qquad\textrm{for}\qquad
b(A):=\bigg(1-\frac{m_*^2}{\|A\|_F^4}\bigg)^{-1/2}>1 .
\end{equation}
Then: 
\begin{enumerate}[(i)]\itemsep3pt
\item The matrix $\overline{A^2}$ is symmetric, commutes with $\widetilde A^2$, and its eigenvalues belong to the asymptotic sphere \eqref{norm.section.hyper.cone}.
In particular $0\leq \overline{A^2}\leq\tfrac2n I$.
\item $\displaystyle
\|\widetilde A^2-\overline{A^2}\|\leq2\sqrt{n(n-1)}\,\|A\|_F^{-4}$.
\item For
$\delta\in(0,t_*)$, we have
\begin{equation}\label{def.tau.delta}
A_{\delta}:=\sqrt{\tau(\delta)\,\overline{A^2}+\delta I}\in\A_2,
\qquad\textrm{for}\qquad
\tau(\delta):=\frac{n-1}{\delta}-\frac{n\delta}{2}>0.
\end{equation}
\end{enumerate}
\end{lem}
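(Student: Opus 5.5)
The whole argument is a computation in the eigenvalue coordinates. Since $\overline{A^2}$ is a combination of $\widetilde A^2$ and $I$, it is simultaneously diagonalizable with $\widetilde A^2$, so everything reduces to statements about the vectors $\widetilde\mu$, $\overline\mu:=b(A)\,\widetilde\mu+\frac1n(1-b(A))(1,\dots,1)$, and the eigenvalues of $A_\delta^2$. Write $b=b(A)$ and $t=\|A\|_F^2>m_*$, so that $b$ is well defined and $b>1$.

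\emph{Part (i).} Symmetry and commutation are immediate. Since $\sigma_1(\widetilde\mu)=1$, we get
\[
\sigma_1(\overline\mu)=b+(1-b)=1,\qquad \overline\mu-\overline c=b(\widetilde\mu-\overline c).
\]
By Lemma~\ref{lem:sections}(i), $|\overline\mu-\overline c|=b\,\widetilde r(t)$. I then check that $b\,\widetilde r(t)=\overline r_\infty$. Indeed,
\[
\widetilde r(t)^2=\frac{1}{n(n-1)}\Big(1-\frac{2n(n-1)}{t^2}\Big)=\overline r_\infty^2\Big(1-\frac{m_*^2}{t^2}\Big),
\]
and $b=(1-m_*^2/t^2)^{-1/2}$ exactly cancels the last factor. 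So $\overline\mu$ lies on the asymptotic sphere \eqref{norm.section.hyper.cone}. Lemma~\ref{lem:sections}(iii) then gives $\overline\mu_i\in[0,2/n]$, that is, $0\le\overline{A^2}\le\frac2n I$.

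\emph{Part (ii).} In the common eigenbasis,
\[
\widetilde A^2-\overline{A^2}=(1-b)(\widetilde A^2-\tfrac1n I),
\]
so its operator norm is at most $(b-1)\max_i|\widetilde\mu_i-\frac1n|\le(b-1)\,\widetilde r(t)$. Since $b\,\widetilde r=\overline r_\infty$, this bound equals $\overline r_\infty-\widetilde r(t)$. By \eqref{eq:radialgap} it is at most $2\sqrt{n(n-1)}\,t^{-2}$, which is the claim.

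\emph{Part (iii).} This is the step needing care. First, positivity: $\tau(\delta)>0$ is equivalent to $\delta^2<2(n-1)/n=t_*^2$, which holds for $\delta\in(0,t_*)$. Together with $\overline{A^2}\ge0$ this makes $\tau\overline{A^2}+\delta I$ positive definite, so the square root $A_\delta\in\S^n_+$ is well defined.

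Next, membership in $\A_2$. The eigenvalues of $A_\delta^2$ are $\nu=\tau\overline\mu+\delta\mathbf 1$, with $\mathbf 1=(1,\dots,1)$. By Lemma~\ref{lem:char} I need to verify $\frac1{n-1}\sigma_1(\nu)^2-|\nu|^2=2$. Using $\sigma_1(\overline\mu)=1$ and $|\overline\mu|^2=\frac1{n-1}$ from \eqref{alternative.asymp.esphere.def}:
\[
\sigma_1(\nu)=\tau+n\delta,\qquad |\nu|^2=\frac{\tau^2}{n-1}+2\tau\delta+n\delta^2.
\]
Hence
\[
\frac{(\tau+n\delta)^2}{n-1}-|\nu|^2=\frac{2n\tau\delta}{n-1}-2\tau\delta+\frac{n^2\delta^2}{n-1}-n\delta^2=\frac{2\tau\delta}{n-1}+\frac{n\delta^2}{n-1}.
\]
This equals $2$ exactly when $\tau=\frac{n-1}{\delta}-\frac{n\delta}{2}$, which is the stated $\tau(\delta)$. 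This identity is the main point; the cone terms cancel precisely because $\overline\mu$ lies on the asymptotic sphere.
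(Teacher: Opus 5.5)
Your proof is correct and follows the paper's argument essentially line by line. You use simultaneous diagonalization and $\overline\mu-\overline c=b(\widetilde\mu-\overline c)$ together with $b(A)\,\widetilde r(\|A\|_F^2)=\overline r_\infty$ for (i) and (ii), and for (iii) you compute $\tfrac{1}{n-1}\sigma_1^2-|\mu^\delta|^2$ directly from $\sigma_1(\overline\mu)=1$ and $|\overline\mu|^2=\tfrac{1}{n-1}$; you also write out the checks $b\,\widetilde r=\overline r_\infty$ and $\tau(\delta)>0\iff\delta<t_*$, which the paper only asserts.
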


\begin{proof}
From its definition, it is clear that $\overline{A^2}$ is symmetric and can be diagonalized simultaneously with $\widetilde A^2$. As a consequence, $\widetilde A^2$ and $\overline{A^2}$ commute. 
Let us denote by $\widetilde\mu$ the eigenvalues of $\widetilde{A}^2$, and by $\overline\mu$ those of $\overline{A^2}$. We can rewrite \eqref{eq:perturbation.and.radius}, as
\begin{equation}\label{eq.mus.lemma}
\overline \mu=\overline c+b\big(\widetilde\mu-\overline c\big)
\end{equation}
for $\overline c$ as in \eqref{def.bar.c}. Let us prove that $\overline\mu$ satisfies \eqref{norm.section.hyper.cone}.
Firstly, taking traces in \eqref{eq:perturbation.and.radius}, and using $\tr\widetilde A^2=1$, we get
$\sigma_1(\overline\mu)=b+(1-b)=1$, the first condition in \eqref{norm.section.hyper.cone}.
Moreover, taking the modulus on both sides of \eqref{eq.mus.lemma}, using Lemma~\ref{lem:sections}, and the fact that
\begin{equation}\label{equation.b.r.tilde.infty}
b(A)=\frac{\overline r_\infty}{\widetilde r(\|A\|_F^2)},
\end{equation}
by the definitions in \eqref{definition-r}, we obtain
$$
|\overline \mu-\overline c| =b\,|\widetilde\mu-\overline c|
=b\,\widetilde r\big(\|A\|_F^2\big)=\overline r_\infty,
$$
the second condition in \eqref{norm.section.hyper.cone}.
Note that  $\overline\mu_i\geq0$ also by Lemma \ref{lem:sections}, completing the proof of~\emph{(i)}.

Let us show \emph{(ii)}. From \eqref{eq:perturbation.and.radius}  and \eqref{equation.b.r.tilde.infty} we have
\[
\|\overline{A^2}-\widetilde A^2\|=(b(A)-1)\big\|\widetilde A^2-\tfrac1nI\big\|
=
\frac{\overline r_\infty-\widetilde r(\|A\|_F^2)}{\widetilde r(\|A\|_F^2)}\,\big\|\widetilde A^2-\tfrac1nI\big\|.
\]
 On the other hand,
\[
\|\widetilde A^2-\tfrac1nI\|=\max_{1\leq i \leq n}|\widetilde\mu_i-\tfrac1n|
\leq|\widetilde\mu-\overline c|=\widetilde r\big(\|A\|_F^2\big).
\]
 Therefore, by \eqref{eq:radialgap},
$$
\|\widetilde A^2-\overline{A^2}\|
\leq\overline r_\infty-\widetilde r\big(\|A\|_F^2\big)
\leq\frac{2\sqrt{n(n-1)}}{\|A\|_F^4},
$$
as desired.

For the proof of \emph{(iii)}, observe that the eigenvalues of $A_{\delta}^2=\tau \overline{A^2}+\delta I$ are
$\mu_i^\delta=\tau\overline\mu_i+\delta\geq\delta>0$, since $\overline\mu_i\geq0$ by \emph{(i)}.
Hence $A_{\delta}$ is well defined and $A_{\delta}\in\S^n_+$. Let us verify that this $\mu^\delta$ satisfies~\eqref{eq:classA2}. Indeed,
by \eqref{alternative.asymp.esphere.def}, we have $\sigma_1(\overline\mu)=1$ and $|\overline\mu|^2=\tfrac{1}{n-1}$, which gives
$$
\sigma_1(\mu^\delta)=\tau+n\delta,
\qquad
|\mu^\delta|^2=\tau^2|\overline\mu|^2+2\tau\delta\,\sigma_1(\overline\mu)+n\delta^2
=\frac{\tau^2}{n-1}+2\tau\delta+n\delta^2.
$$
Then we can compute
$$
\frac{1}{n-1}\,\sigma_1(\mu^\delta)^2-|\mu^\delta|^2
=\frac{(\tau+n\delta)^2-\tau^2}{n-1}-2\tau\delta-n\delta^2
=\frac{2\tau\delta+n\delta^2}{n-1}=2,
$$
by the choice of $\tau(\delta)$. By Lemma~\ref{lem:char}, $A_{\delta}\in\A_2$ and the proof of the lemma is complete.
\end{proof}

For convenience in the sequel, we write the perturbation \eqref{def.tau.delta} in terms of $\widetilde{A}^2$ instead~of~$\overline{A^2}$.

\begin{cor}\label{cor:pert.tildeA}
Let $A\in\A_2$ and $\delta\in(0,\frac{t_*}{2}]$, and assume
\begin{equation}\label{eq:threshold.T0}
\|A\|_F^{2} \geq \frac{2(n-1)}{\delta}.
\end{equation}
Then, the matrix $A_{\delta}\in\A_2$ from \eqref{def.tau.delta} satisfies
\begin{equation}\label{def.B.delta.tildeA}
A_{\delta}=\sqrt{\tau'\,\widetilde{A}^2+\delta'\, I},
\qquad
\tau':=\tau(\delta)\,b(A),
\qquad
\delta':=\delta-\frac{\tau(\delta)\big(b(A)-1\big)}{n},
\end{equation}
with
\begin{equation}\label{eq:primed.brackets}
\frac{n-1}{2\,\delta}\leq\tau'\leq\frac{2\,(n-1)}{\sqrt{3}\,\delta}
\qquad\textrm{and}\qquad
\frac{\delta}{2}\leq\delta'\leq\delta .
\end{equation}
\end{cor}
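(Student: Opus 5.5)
The plan is to substitute $\overline{A^2}=b\,\widetilde A^2+\frac1n(1-b)I$ into the definition \eqref{def.tau.delta} of $A_\delta$, and then bound the resulting coefficients.

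\textbf{The identity.} Substituting gives
\[
A_\delta^2=\tau(\delta)\,b\,\widetilde A^2+\Big(\delta+\frac{\tau(\delta)(1-b)}{n}\Big)I .
\]
This is exactly \eqref{def.B.delta.tildeA}, with $\tau'=\tau b$ and $\delta'=\delta-\tau(b-1)/n$. Membership $A_\delta\in\A_2$ is already given by Lemma~\ref{lem:perturbation.matrix.and.closeness}(iii), since $\delta\le t_*/2<t_*$. Because $A_\delta$ is the positive square root, the formula holds as stated.

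\textbf{Bounds on $\tau'$.} Write $T=\|A\|_F^2$, so that \eqref{eq:threshold.T0} reads $T\ge 2(n-1)/\delta$.
\begin{itemize}
\item Lower bound: since $b>1$, we have $\tau'\ge\tau(\delta)=\frac{n-1}{\delta}-\frac{n\delta}{2}$. From $\delta\le t_*/2$ we get $n\delta^2\le nt_*^2/4=(n-1)/2$. Hence $\frac{n\delta}{2}\le\frac{n-1}{4\delta}\le\frac{n-1}{2\delta}$, and therefore $\tau\ge\frac{n-1}{2\delta}$.
\item Upper bound: we have $\tau\le\frac{n-1}{\delta}$, so it remains to show $b\le 2/\sqrt3$, i.e.\ $m_*^2/T^2\le 1/4$, i.e.\ $T\ge 2m_*$. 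From $T\ge 2(n-1)/\delta$ and $\delta\le t_*/2$ we get $T\ge 4(n-1)/t_*$. Using $t_*=\sqrt{2(n-1)/n}$, this equals $4(n-1)\sqrt{n/(2(n-1))}=2\sqrt{2n(n-1)}=2m_*$.
\end{itemize}

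\textbf{Bounds on $\delta'$.} The upper bound $\delta'\le\delta$ is immediate from $b>1$. For the lower bound we need $\tau(b-1)/n\le\delta/2$. Using $\tau\le(n-1)/\delta$, it suffices to show $b-1\le\frac{n\delta^2}{2(n-1)}$. Two estimates give this:
\begin{itemize}
\item Set $x=m_*^2/T^2\le 1/4$. Then $b-1=(1-x)^{-1/2}-1\le c\,x$; convexity on $[0,1/4]$ gives $c=4\big((3/4)^{-1/2}-1\big)\approx0.62$, so we may take $c=1$.
\item The threshold gives $x=m_*^2/T^2\le \frac{2n(n-1)\,\delta^2}{4(n-1)^2}=\frac{n\delta^2}{2(n-1)}$.
\end{itemize}
Combining, $b-1\le\frac{n\delta^2}{2(n-1)}$, hence $\tau(b-1)/n\le\frac{n-1}{\delta}\cdot\frac{\delta^2}{2(n-1)}=\delta/2$, and so $\delta'\ge\delta/2$.

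The main obstacle is this last estimate, which requires a careful elementary bound of $b-1$ by $m_*^2/\|A\|_F^4$; the constants happen to fit exactly under the assumptions $\delta\le t_*/2$ and \eqref{eq:threshold.T0}.
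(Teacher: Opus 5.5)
Your proof is correct and follows the paper's argument essentially step by step: you substitute $\overline{A^2}$ into $A_\delta^2$, bound $\tau(\delta)$ between $\frac{n-1}{2\delta}$ and $\frac{n-1}{\delta}$, use the threshold to get $\|A\|_F^2\ge 2m_*$ (hence $b(A)\le 2/\sqrt3$), and control $b(A)-1$ by $m_*^2/\|A\|_F^4$. The differences are cosmetic. You obtain the last bound from convexity of $x\mapsto(1-x)^{-1/2}-1$ on $[0,\tfrac14]$, whereas the paper uses $b-1\le 1-b^{-2}$, valid since $b<(1+\sqrt5)/2$. Also, the inequality $\|A\|_F^2\ge 2m_*>m_*$, which you only derive later, is what licenses invoking Lemma~\ref{lem:perturbation.matrix.and.closeness} at the start, so it should come first.
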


\begin{proof}
First, note that \eqref{eq:threshold.T0} implies $\|A\|_F^2\geq\tfrac{2(n-1)}{\delta}\geq 2m_*$, so Lemma~\ref{lem:perturbation.matrix.and.closeness} applies.
The identity \eqref{def.B.delta.tildeA} follows by substituting \eqref{eq:perturbation.and.radius} into \eqref{def.tau.delta}.
Let us prove \eqref{eq:primed.brackets}. Since $\delta^2\leq\tfrac{n-1}{2n}$, we have $\tfrac{n\delta}{2}\leq\tfrac{n-1}{2\delta}$, and hence $\tau(\delta)\geq\tfrac{n-1}{2\delta}$.
Together with $b(A)>1$, this gives the lower bound for $\tau'$. By \eqref{eq:threshold.T0} and $\delta^2\leq\tfrac{n-1}{2n}$,
\[
\frac{2n(n-1)}{\|A\|_F^4}\leq\frac{n\,\delta^{2}}{2(n-1)}\leq\frac14 .
\]
In particular, $b(A)<2/\sqrt{3}$, and the upper bound for $\tau'$
follows from $$\tau(\delta)\leq\frac{n-1}{\delta}.$$ To conclude, let us show that $\delta'\in[\delta/2,\delta]$. Using again that $b(A)<2/\sqrt{3}<(1+\sqrt{5})/2$, we have $b(A)^2-b(A)-1<0$ and
\[
b(A)-1\leq1-\frac{1}{b(A)^2}=\frac{2n(n-1)}{\|A\|_F^4}=\frac{m_*^2}{\|A\|_F^4}.
\]
Therefore,
\[
0\leq\frac{\tau(\delta)\big(b(A)-1\big)}{n}
\leq\frac{2(n-1)^2}{\delta\|A\|_F^{4}}
\leq\frac{\delta}{2},
\]
which completes the proof.
\end{proof}

\begin{rem}
\label{rem:path}
For fixed $A\in\A_2$, the matrices $A_{\delta}$ describe a smooth arc on the hyperboloid~\eqref{eq:classA2} contained in the axial plane through $A$, and joining $A$ to the vertex $A_{*}$ defined in \eqref{def.A.star}, see Figure~\ref{fig.lateral.section}. Indeed, $\delta'$ vanishes precisely at
$$
\delta_0(A)^2=\frac{2(n-1)}{n}\cdot\frac{b(A)-1}{b(A)+1},
$$
where $A_{\delta_0}=A$, while $A_{\delta}\to A_{*}$ as $\delta\to t_*$, which gives $\tau(\delta)\to0$. Along this arc,
$$
\|A_{\delta}\|_F^2=\tr\big(A_{\delta}^2\big)=\tau(\delta)+n\delta=\frac{n-1}{\delta}+\frac{n\delta}{2}
$$
is strictly decreasing, from $\|A\|_F^2$ to the minimum value
$m_*$ given in Lemma~\ref{lem:trace}, attained at the vertex.
\end{rem}

We are now ready to prove estimate \eqref{eq:keyestloc}. Fix
\begin{equation}\label{fixing.delta.local.new}
\delta:=\frac{\eta_0}{2nK},
\end{equation}
and observe that $\delta\leq\tfrac{t_*}{2}$ by the compatibility condition \eqref{calculation.compatibility.local}. Hence, $\delta$ is under the hypotheses of Corollary~\ref{cor:pert.tildeA}.
Let $T_0:=T_0(n,\delta)=\frac{2(n-1)}{\delta}$ be as in \eqref{eq:threshold.T0}. If $\|A\|_F^2< T_0$, then \eqref{eq:assumloc} directly gives
\begin{equation}\label{local.case.small.A}
\tr\big(AD^2u(x)A\big)\geq\eta_0
\geq\frac{\eta_0}{T_0}\,\tr(A^2)
=\frac{\eta_0\,\delta}{2(n-1)}\,\tr(A^2).
\end{equation}
Assume henceforth $\|A\|_F^2\geq T_0$, and let $A_{\delta}\in\A_2$, $\tau'$, $\delta'$ be as in \eqref{def.B.delta.tildeA}.
Applying \eqref{eq:assumloc} to $A_{\delta}\in\A_2$, and using that $\Delta u(x)\leq nK$ by the semiconcavity of $u$ along with $\delta'\leq\delta$ and \eqref{fixing.delta.local.new}, we have
\begin{equation}\label{comentario.sobre.semiconcavidad.Laplacian}
\begin{split}
\eta_0\leq\tr\big(A_{\delta}D^2u(x)A_{\delta}\big)
&=\tr\big(A_{\delta}^2D^2u(x)\big)
=\tau'\tr\big(\widetilde{A}^2D^2u(x)\big)+\delta'\Delta u(x)\\
&\leq\tau'\tr\big(\widetilde{A}D^2u(x)\widetilde{A}\big)+n\delta K
\leq\tau'\,\frac{\tr(AD^2u(x)A)}{\|A\|_F^2}+\frac{\eta_0}{2},
\end{split} 
\end{equation}
which, by \eqref{eq:primed.brackets}, gives
\begin{equation}\label{local.case.big.A}
\tr\big(A\,D^2u(x)\,A\big)
\geq\frac{\eta_0}{2\tau'}\,\tr(A^2)
\geq\frac{\eta_0\sqrt{3}\,\delta}{4(n-1)}\,\tr(A^2).
\end{equation}
Combining \eqref{local.case.small.A} and \eqref{local.case.big.A}, by the definition of $\delta$ in \eqref{fixing.delta.local.new} we can take 
\begin{equation} \label{eq:gamma-classical}
\gamma_0^{\rm loc}(n,K,\eta_0):= \min\bigg\{\frac{1}{2(n-1)},\frac{\sqrt{3}}{4(n-1)}\bigg\}\eta_0\delta=\frac{\sqrt{3}\eta_0^2}{8n(n-1)K}, 
\end{equation}
and \eqref{eq:keyestloc} follows.

\begin{rem}
The proof of \emph{(iii)} in Lemma \ref{lem:perturbation.matrix.and.closeness} only uses the two values
$\sigma_1(\overline\mu)=1$ and $|\overline\mu|^2=\tfrac{1}{n-1}$ to determine $A_{\delta}\in\A_2$. Since these are constant on the asymptotic sphere by \eqref{alternative.asymp.esphere.def}, the function $\tau(\delta)$ depends only on $\delta$ and not on the matrix $\overline{A^2}$. In particular, this allows us to fix $\delta$ as in \eqref{fixing.delta.local.new}, depending only on $n,K,\eta_0$ and independently of $A$, which, in turn, yields a constant  $\gamma_0>0$ that does not degenerate as $\|A\|_F\to\infty$.
Ultimately, this is possible because the projection $\overline{A^2}$ lies in the normalized section of the asymptotic cone, whose sections are homothetic 
and all scale to the asymptotic sphere.
\end{rem}

\subsection{From local to nonlocal} 

The previous argument uses properties satisfied by the trace of a matrix that are not available in the nonlocal setting; mainly, the use of the linearity of the trace in the coefficient matrix along with the semiconcavity of $u$ to control the axial perturbation in \eqref{comentario.sobre.semiconcavidad.Laplacian}.
However, the general strategy can be adapted. To motivate our approach, we recall that the trace operator can be written as an integral average of second-order directional derivatives on the sphere. 

\begin{lem} \label{lem:average}
 For any $B\in \S^n$,
 $$
 \frac{1}{n} \tr(B) = \fint_{\mathbb{S}^{n-1}} \langle B e, e\rangle \, d\H_e^{n-1}.
 $$
 \end{lem}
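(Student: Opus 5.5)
The identity is linear in $B$, so the plan is to reduce it to a computation on rank-one matrices, or equivalently to a second-moment computation on the sphere. First write $\langle Be,e\rangle=\sum_{i,j}B_{ij}e_ie_j$. Averaging over $\mathbb S^{n-1}$ then gives
\[
\fint_{\mathbb S^{n-1}}\langle Be,e\rangle\,d\H^{n-1}_e=\sum_{i,j}B_{ij}\fint_{\mathbb S^{n-1}}e_ie_j\,d\H^{n-1}_e.
\]
It therefore suffices to compute the second-moment matrix $M_{ij}:=\fint_{\mathbb S^{n-1}}e_ie_j\,d\H^{n-1}_e$.

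The off-diagonal entries vanish by symmetry. The reflection $e_i\mapsto -e_i$ preserves $\H^{n-1}$ on the sphere and changes the sign of $e_ie_j$ for $i\neq j$. The diagonal entries are all equal, since coordinate permutations are isometries of the sphere. Their sum is $\fint_{\mathbb S^{n-1}}|e|^2\,d\H^{n-1}_e=1$, so each equals $1/n$. Hence $M=\tfrac1n I$, and
\[
\sum_{i,j}B_{ij}M_{ij}=\tfrac1n\tr(B).
\]

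Alternatively, one can diagonalize $B=O^tDO$ with $O$ orthogonal. The rotation invariance of $\H^{n-1}$ then reduces the claim to a diagonal $B$, and the diagonal computation above finishes it.

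There is no real obstacle. The only point needing care is the invariance of surface measure under reflections and permutations, which holds because these maps are orthogonal.
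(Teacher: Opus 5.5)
Your proof is correct and follows the paper's argument: expand $\langle Be,e\rangle=\sum_{i,j}B_{ij}e_ie_j$, use reflection symmetry to get $\int_{\mathbb S^{n-1}}e_ie_j\,d\H^{n-1}_e=0$ for $i\neq j$, and use permutation symmetry together with $\sum_i e_i^2=1$ to get each diagonal moment equal to $\omega_{n-1}/n$. Your alternative via diagonalization is not needed, since the coordinate computation already handles every $B\in\S^n$ directly.
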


 \begin{proof}
 Let $B \in \S^n$. We have
 \begin{align*}
 \int_{\mathbb{S}^{n-1}} \langle B e, e\rangle\, d\H_e^{n-1} 
 &=\sum_{i,j=1}^n b_{ij} \int_{\mathbb{S}^{n-1}} e_i e_j \, d\H^{n-1}_e.
 \end{align*}
 By the symmetry of the sphere,
 $$
\int_{\mathbb{S}^{n-1}} e_i e_j \, d\H^{n-1}_e = 0, \quad \text{ for all } i\neq j,
 $$
and
 $$
 n\int_{\mathbb{S}^{n-1}} |e_i|^2 \, d\H^{n-1}_e = \sum_{j=1}^n \int_{\mathbb{S}^{n-1}} |e_j|^2 \, d\H^{n-1}_e = \omega_{n-1},
 $$
 which gives the result.
 \end{proof}

Using Lemma \ref{lem:average}, and the notation
 $\Theta u(x,e)$ and $\I u(x,y)$ for the second-order directional derivatives on the sphere and Hessian quadratic form, respectively, we obtain a local counterpart of the representation formula in Lemma~\ref{lem:newrep} as follows:
\begin{equation}\label{representation.local}\begin{split}
\frac{1}{n} \tr(A^t D^2 u(x) A) &= \fint_{\mathbb{S}^{n-1}} 
\big\langle D^2 u(x) (A\theta), (A\theta)\big\rangle \, d\H^{n-1}_\theta\\
 &= \fint_{\mathbb{S}^{n-1}} 
 \I u(x,A\theta)
 \, d\H^{n-1}_\theta\\
 &= \fint_{\mathbb{S}^{n-1}} |A \theta|^2 \Theta u\Big(x,\frac{A\theta}{|A\theta|} \Big) \,d\H^{n-1}_\theta.
 \end{split}
\end{equation} 
This observation is the starting point of our nonlocal analysis. Our proof of the nonlocal key estimate in Theorem \ref{thm:keyest} uses two main ingredients to adapt \eqref{comentario.sobre.semiconcavidad.Laplacian}. The first one is the nonlocal representation formula in terms of one-dimensional fractional Laplacians, Lemma \ref{lem:newrep}, and the second one, an operator comparison result, Proposition \ref{prop:tech-lem}, that replaces the trace linearity in the coefficient matrix, which will allow us to control the axial remainders in the nonlocal case.

\section{Gaussian kernels and operator comparison} \label{app:tech-lem}

We will prove the following operator comparison result, which will play a major role in Section~\ref{sec:keyest}.

\begin{prop} \label{prop:tech-lem}
 Let $s\in (0,1)$ and assume $u\in C(\Rn)\cap L_{\omega_s}^1(\Rn)$ satisfies \eqref{NSC} at a point $x$ where there is a $C^2$-function touching $u$ from above.
 Let
 $Q_1,Q_2\in\mathcal S^n_+$ be such that $Q_2^2> Q_1^2$, and assume that
 \begin{equation}\label{eq:finiteness-hyp}
 \mathcal{L}^s_{Q_i}u(x)>-\infty, \qquad i=1,2.
 \end{equation}
 Then, 
\begin{equation}\label{comparison.estimate}
 \mathcal{L}_{Q_2}^su(x)- \mathcal{L}_{Q_1}^su(x)
\leq 
c_{K,s}
\left(\tr(Q_2^2-Q_1^2)\right)^s,
\end{equation}
where 
\[
c_{K,s}:=K\cdot \frac{\omega_{n-1}}{4 n^s}\cdot\frac{C_{n,s}}{1-s},
\]
and $c_{K,s}\to K$ as $s\to 1$.
\end{prop}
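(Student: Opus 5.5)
The plan is to replace the polar-coordinate representation of Lemma~\ref{lem:newrep} by a Gaussian one. Then the matrix $Q$ enters only through the covariance of a Gaussian vector, and $Q_2$ can be realized as $Q_1$ plus an independent Gaussian perturbation.

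\emph{Step 1 (Gaussian representation).} Let $G$ be a centered Gaussian vector in $\Rn$ with covariance $\frac1n I$, so that $\mathbb E|G|^2=1$. By the $2s$-homogeneity \eqref{I.s-homogeneous} of $\I^s u(x,\cdot)$, integrating in polar coordinates gives
\[
\mathbb E\big[\I^s u(x,QG)\big]=\kappa_{n,s}\fint_{\mathbb S^{n-1}}\I^s u(x,Qe)\,d\H^{n-1}_e,
\qquad \kappa_{n,s}:=\mathbb E|G|^{2s}.
\]
The radial moment factors out, and the angular part is uniform on the sphere because $G$ is isotropic. Combining this with Lemma~\ref{lem:newrep}, I will write
\[
\mathcal L^s_Q u(x)=c'_{n,s}\,\mathbb E\big[\I^s u(x,QG)\big],\qquad c'_{n,s}=\frac{C_{n,s}\,\omega_{n-1}}{4(1-s)\,\kappa_{n,s}},
\]
for every $Q\in\S^n_+$. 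The identity is justified by Fubini/Tonelli, using that $\I^s u(x,\cdot)\in L^1(\mathbb S^{n-1})$ (Remark~\ref{remark.I.s.in.L1}) together with hypothesis \eqref{eq:finiteness-hyp}. The positive part of $\I^s u$ is controlled by \eqref{bound.I.s}, so each expectation is a well-defined finite number.

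\emph{Step 2 (coupling).} Set $R:=\sqrt{Q_2^2-Q_1^2}\in\S^n_+$, and let $G'$ be an independent copy of $G$. Then $Q_1G+RG'$ is centered Gaussian with covariance $\frac1n(Q_1^2+R^2)=\frac1nQ_2^2$. Hence $Q_2G$ has the same law as $X+Y$, where $X:=Q_1G$ and $Y:=RG'$. Since $Y$ is symmetric and independent of $X$, the vector $X-Y$ has the same law as $X+Y$, and therefore
\[
\mathcal L^s_{Q_2}u(x)-\mathcal L^s_{Q_1}u(x)=\frac{c'_{n,s}}{2}\,\mathbb E\big[\I^s u(x,X+Y)+\I^s u(x,X-Y)-2\,\I^s u(x,X)\big]=\frac{c'_{n,s}}{2}\,\mathbb E\big[\mathcal Q^s u(x;X,Y)\big].
\]

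\emph{Step 3 (apply \eqref{NSC} and Jensen).} The joint law of $(X,Y)$ is absolutely continuous when $Q_1$ and $R$ are nonsingular; in general it is at least absolutely continuous along the relevant directions. So the a.e.\ bound \eqref{NSC} applies almost surely and gives $\mathbb E[\mathcal Q^s u]\le 2K\,\mathbb E|RG'|^{2s}$. Since $s<1$, Jensen's inequality yields
\[
\mathbb E|RG'|^{2s}\le\big(\mathbb E|RG'|^2\big)^s=\Big(\tfrac1n\tr(R^2)\Big)^s=n^{-s}\big(\tr(Q_2^2-Q_1^2)\big)^s .
\]
Collecting constants gives the bound $K\,c'_{n,s}\,n^{-s}(\tr(Q_2^2-Q_1^2))^s$. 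This is not quite $c_{K,s}(\tr(Q_2^2-Q_1^2))^s$, because of the factor $\kappa_{n,s}$ in the denominator of $c'_{n,s}$. To remove it, I would apply Jensen conditionally on the radial part, or equivalently normalize $G'$ differently from $G$, so that $\kappa_{n,s}$ cancels. Then the constant becomes exactly $K\frac{\omega_{n-1}}{4n^s}\frac{C_{n,s}}{1-s}$. The limit $c_{K,s}\to K$ then follows from \eqref{eq:Cns.limit}.

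\emph{Main obstacle.} The delicate part is the measure-theoretic bookkeeping rather than the algebra. The quantities $\I^s u(x,\cdot)$ may be $-\infty$ or undefined on null sets, and \eqref{NSC} holds only almost everywhere. Rearranging $\mathbb E[\I^s(X\pm Y)]-\mathbb E[\I^s(X)]$ into a single expectation also requires each term to be integrable. I would handle this by splitting $\I^s u=(\I^s u)^+-(\I^s u)^-$: the positive part is bounded by $K|\cdot|^{2s}$, which is integrable against Gaussians, and the negative part is integrable by \eqref{eq:finiteness-hyp}. If $Q_1$ or $R$ is degenerate, I would first perturb $R$ to $R+\varepsilon I$ and pass to the limit.
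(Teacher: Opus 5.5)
Your proposal is correct and is essentially the paper's argument in probabilistic language. The coupling $Q_1G+RG'\overset{d}{=}Q_2G$ is the convolution identity $G_{Q_1^2}*G_{Q_2^2-Q_1^2}=G_{Q_2^2}$ of Lemma~\ref{lem:gauss-conv}, and the symmetry of $Y$ is the evenness of $G_{Q_2^2-Q_1^2}$. Your ``conditional Jensen'' (factor out the radial moment $\mathbb E|G'|^{2s}$, then apply Jensen to the uniform measure on $\mathbb S^{n-1}$) is exactly \eqref{lem:comp.eq3}; this is what cancels $\kappa_{n,s}$ and gives the constant $c_{K,s}$.

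Two minor corrections. Renormalizing $G'$ separately is not an equivalent fix, since it would break $X+Y\overset{d}{=}Q_2G$. No perturbation $R+\varepsilon I$ is needed either: $Q_1\in\S^n_+$ and $Q_2^2>Q_1^2$ make both $Q_1$ and $R$ nonsingular, so $(X,Y)$ has a density on $\R^{2n}$ and the a.e.\ bound \eqref{NSC} holds almost surely.
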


Observe that the operator comparison estimate \eqref{comparison.estimate} is one-sided, which is inherited from the one-sided nonlocal semiconcavity hypothesis \eqref{NSC}. 
Obtaining a two-sided estimate would require additional information on $u$, such as a lower bound in \eqref{NSC}, which could be seen as a form of nonlocal convexity or semiconvexity. 
However, such conditions are not natural in our context, as they avoid the degeneracy cases in the class $\A_2$. In any case, the proof of the key estimate in Section \ref{sec:keyest} only requires the upper bound in \eqref{comparison.estimate}.

\medskip

The proof of Proposition~\ref{prop:tech-lem} uses the representation formula from Lemma~\ref{lem:newrep}, and it reduces to controlling the difference
\[
 \mathcal{L}_{Q_2}^su(x)- \mathcal{L}_{Q_1}^su(x)
= \frac{C_{n,s}}{4(1-s)}\int_{\mathbb S^{n-1}} \bigl(\I^s u (x,Q_2\theta)-\I^s u (x,Q_1\theta)\bigr) \,d \H^{n-1}_\theta,
\]
where $\I^s u$ is defined in \eqref{def.alternate.I.s}.
Recall that $\I^su(x,\cdot)$ is homogeneous of degree $2s$, see \eqref{I.s-homogeneous}.
Next, we recall an elementary formula reducing integrals of homogeneous functions against radial weights to integrals over the sphere.
This is a classical idea in the analysis of homogeneous distributions (see, for instance, \cite{Folland} for a historical note).

\begin{lem}\label{lem:hom-radial}
Let $\J:\mathbb R^n\setminus\{0\}\to\mathbb R$ be homogeneous of degree $\alpha\in\mathbb{R}$, i.e., such that 
$\J(rz)=r^{\alpha}\J(z)$ for all $r>0$ and $z\in\mathbb R^n\setminus\{0\}$, with $\J\in L^1(\mathbb S^{n-1})$. Let $w:(0,\infty)\to[0,\infty)$ be measurable with
\[
\kappa_w:=\int_0^\infty w(r)\,r^{n+\alpha-1}\,dr<\infty .
\]
Then $\J w(|\cdot|)\in L^1(\mathbb R^n)$ and
\[
\int_{\mathbb R^n}\J(z)\,w(|z|)\,dz=\kappa_w\int_{\mathbb S^{n-1}}\J(\theta)\,d\mathcal H^{n-1}_\theta.
\]
\end{lem}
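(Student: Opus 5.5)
The plan is to use polar coordinates in $\mathbb R^n$ together with the homogeneity of $\J$, and then apply Tonelli's theorem. First I will prove integrability. Writing $z=r\theta$ with $r>0$ and $\theta\in\mathbb S^{n-1}$, we have $dz=r^{n-1}\,dr\,d\H^{n-1}_\theta$. Since $|\J|$ is also homogeneous of degree $\alpha$, we get $|\J(r\theta)|=r^\alpha|\J(\theta)|$. The integrand $|\J(z)|\,w(|z|)$ is nonnegative and measurable (measurability comes from that of $\J$ and $w$, via the product structure under the polar diffeomorphism), so Tonelli applies and gives
\[
\int_{\mathbb R^n}|\J(z)|\,w(|z|)\,dz=\int_{\mathbb S^{n-1}}\int_0^\infty r^{\alpha}|\J(\theta)|\,w(r)\,r^{n-1}\,dr\,d\H^{n-1}_\theta=\kappa_w\,\|\J\|_{L^1(\mathbb S^{n-1})}<\infty .
\]
Hence $\J w(|\cdot|)\in L^1(\mathbb R^n)$. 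The origin is a null set, so it can be ignored.

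Next I will repeat the same computation without absolute values. Fubini's theorem is now justified by the integrability just established, and it yields
\[
\int_{\mathbb R^n}\J(z)\,w(|z|)\,dz=\int_{\mathbb S^{n-1}}\J(\theta)\Big(\int_0^\infty w(r)\,r^{n+\alpha-1}\,dr\Big)d\H^{n-1}_\theta=\kappa_w\int_{\mathbb S^{n-1}}\J(\theta)\,d\H^{n-1}_\theta .
\]

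The only point requiring care is measurability. $\J$ is defined on $\mathbb R^n\setminus\{0\}$ and is only assumed to be in $L^1$ on the sphere, so $\J(r\theta)=r^\alpha\J(\theta)$ should be read as defining $\J$ through its spherical trace. In that reading it is a product of measurable functions of $r$ and of $\theta$, and the polar coordinate map is a diffeomorphism $(0,\infty)\times\mathbb S^{n-1}\to\mathbb R^n\setminus\{0\}$. Everything else is routine.
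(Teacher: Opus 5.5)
Your proof is correct and follows the same route as the paper: polar coordinates together with $\alpha$-homogeneity factor the integral into $\kappa_w$ times the spherical integral of $\J$. Your preliminary Tonelli step with $|\J|$, which establishes $\J w(|\cdot|)\in L^1(\mathbb R^n)$ before you apply Fubini, is a worthwhile addition, since the paper's one-line proof leaves the integrability claim implicit.
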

\begin{proof}
By polar coordinates and $\alpha$-homogeneity, 
\[
\int_{\mathbb R^n}\J(z)\,w(|z|)\,dz = \int_0^\infty \left(\int_{\mathbb S^{n-1}}\J(\theta)\,d\H^{n-1}_\theta \right)\, w(r)\,r^{n+\alpha-1}\,dr
=\kappa_w\int_{\mathbb S^{n-1}}\J(\theta)\,d\mathcal H^{n-1}_\theta,
\]
as desired.
\end{proof}

We will use this lemma with Gaussian kernels. 
For $Q\in \S^n_+$, we define the Gaussian kernel
$$
G_Q(y):=
\frac{1}{(2\pi)^{n/2}(\det Q)^{1/2}} e^{-\frac12\langle Q^{-1}y,y\rangle}, \qquad \text{ for } y\in \Rn.
$$
Notice that $G_Q$ is even, and
$$
\int_{\mathbb R^n}G_Q(y)\,dy=1.
$$
For a weight given by a Gaussian, Lemma \ref{lem:hom-radial} particularizes to the following.

\begin{cor} \label{lem:gauss-sphere}
Let $s\in(0,1)$, and $\mathcal{J}:\mathbb R^n\setminus\{0\}\to\mathbb R$ be homogeneous of degree $2s$. Assume that $\J \in L^1(\mathbb S^{n-1})$.
Then, for all $Q\in \S^n_+$,
$$
\int_{\mathbb R^n} \J(y)\,G_{Q^2}(y)\,dy = \kappa_{n,s} \int_{\mathbb S^{n-1}} \J(Q\theta)\, d\mathcal H^{n-1}_\theta,
$$
where
\begin{equation}\label{definition-kappa}
\kappa_{n,s}:=\frac{2^{s-1}}{\pi^{n/2}}\,\Gamma\Big(\frac n2+s\Big).
\end{equation}
\end{cor}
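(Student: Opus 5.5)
The plan is to apply Lemma~\ref{lem:hom-radial} with $\alpha=2s$ to the homogeneous function $z\mapsto\J(Qz)$, and then change variables $y=Qz$ in the Gaussian integral. First I check that $\J\circ Q$ is homogeneous of degree $2s$. Since $Q\in\S^n_+$ is linear and invertible, $\J(Q(rz))=\J(rQz)=r^{2s}\J(Qz)$ for all $r>0$.

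I also need $\J\circ Q\in L^1(\mathbb S^{n-1})$. By homogeneity, $|\J(Q\theta)|=|Q\theta|^{2s}\,|\J(Q\theta/|Q\theta|)|$, and $|Q\theta|$ is bounded above and below by the extreme eigenvalues of $Q$. The map $\theta\mapsto Q\theta/|Q\theta|$ is a diffeomorphism of the sphere whose Jacobian is bounded above and below. Hence integrability of $\J$ on $\mathbb S^{n-1}$ transfers to $\J\circ Q$. Equivalently, $\J\in L^1(\mathbb S^{n-1})$ together with homogeneity gives $\J\in L^1_{\rm loc}$ with respect to the weights used below, and the change of variables preserves this.

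Next, substitute $y=Qz$, so $dy=\det Q\,dz$. Since $\langle Q^{-2}Qz,Qz\rangle=|z|^2$, we get
\[
\int_{\Rn}\J(y)\,G_{Q^2}(y)\,dy=\frac{1}{(2\pi)^{n/2}}\int_{\Rn}\J(Qz)\,e^{-|z|^2/2}\,dz .
\]
Here the factor $(\det Q^2)^{1/2}=\det Q$ cancels with the Jacobian. Now I apply Lemma~\ref{lem:hom-radial} with $w(r)=(2\pi)^{-n/2}e^{-r^2/2}$ and $\alpha=2s$. This yields the constant
\[
\kappa_w=(2\pi)^{-n/2}\int_0^\infty e^{-r^2/2}r^{n+2s-1}\,dr .
\]
The integral is finite, and it also guarantees the absolute integrability needed to justify the change of variables via Fubini and Tonelli.

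Finally I compute $\kappa_w$. Substituting $t=r^2/2$ gives
\[
\int_0^\infty e^{-r^2/2}r^{n+2s-1}\,dr=2^{\frac n2+s-1}\,\Gamma\Big(\frac n2+s\Big).
\]
Multiplying by $(2\pi)^{-n/2}$ gives $2^{s-1}\pi^{-n/2}\Gamma(\frac n2+s)=\kappa_{n,s}$, as in \eqref{definition-kappa}.

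The only mildly delicate point is the $L^1(\mathbb S^{n-1})$ transfer under $Q$. I would handle it by applying Lemma~\ref{lem:hom-radial} to $|\J|$ twice, once in the $y$ variables and once in the $z$ variables, with any radial integrable weight. This shows that both $\J$ and $\J\circ Q$ are integrable against that weight, so no separate Jacobian computation on the sphere is needed.
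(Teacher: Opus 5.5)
Your proof is correct and follows essentially the paper's route: transfer $\J\in L^1(\mathbb S^{n-1})$ to $\J\circ Q$ via the diffeomorphism $\theta\mapsto Q\theta/|Q\theta|$, substitute $y=Qz$ to turn $G_{Q^2}$ into the standard Gaussian, apply Lemma~\ref{lem:hom-radial} with $w(r)=(2\pi)^{-n/2}e^{-r^2/2}$, and evaluate $\kappa_w=\kappa_{n,s}$. Your optional last paragraph needs one adjustment, though the Jacobian argument already suffices: a radial weight in $y$ is no longer radial in $z$ after the substitution, so you must bound it below by a radial one, e.g.\ $e^{-|Qz|^2/2}\ge e^{-\|Q\|^2|z|^2/2}$, and then use Tonelli in polar coordinates rather than the lemma itself.
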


\begin{proof}
Since $Q$ is positive definite, $|Q\theta|$ is bounded above and below by positive constants on $\mathbb S^{n-1}$, and the map
$
\theta\mapsto \frac{Q\theta}{|Q\theta|}
$
is a smooth diffeomorphism of $\mathbb S^{n-1}$. This, together with the $2s$-homogeneity of $\J$, and $\J\in L^1(\mathbb S^{n-1})$, shows that $\J_Q(y)=\J(Qy)$ is under the hypotheses of Lemma \ref{lem:hom-radial}.
Applying the lemma to $\J_Q$ with a Gaussian weight $w(r)=(2\pi)^{-n/2}e^{-r^2/2}$, and taking into account that $Q$ is symmetric, yields
\[
\int_{\mathbb R^n}\J(y)G_{Q^2}(y)\,dy =\int_{\mathbb R^n}\J(Qz)\,w(|z|)\,dz
=\kappa_{n,s} \int_{\mathbb S^{n-1}}\J(Q\theta)\,d\mathcal H^{n-1}_\theta,
\]
where
\begin{equation*}
\kappa_{n,s}=\frac{1}{(2\pi)^{n/2}}\int_0^\infty r^{n-1+2s}e^{-r^2/2}\,dr
=\frac{2^{s-1}}{\pi^{n/2}}\,\Gamma\Big(\frac n2+s\Big).
\end{equation*}
\end{proof}

 Combining Lemma~\ref{lem:newrep} with Corollary~\ref{lem:gauss-sphere}, applied to the $2s$-homogeneous function $\I^su(x,\cdot)$, we can represent the operator $\K^su(x)$ as the infimum of the Gaussian averages of the $\I^su(x,\cdot)$ over the Gaussian kernels $G_{A^2}$, with $A\in\A_2$.

\begin{cor}
Let $A\in\A_2$.
We have the representations
\begin{equation}\label{representation-L}
\begin{split}
\mathcal L^s_Au(x)&=
\frac{C_{n,s}}{4(1-s)\,\kappa_{n,s}}
\int_{\Rn}\I^su(x,y)\,G_{A^2}(y)\,dy
\\
&=
\frac{C_{n,s}}{4(2\pi)^\frac{n}{2}(1-s)\,\kappa_{n,s}}
\int_{\Rn}\I^su(x,Ay)\,e^{-\frac{|y|^2}{2}}dy.
\end{split}
\end{equation}
 Consequently,
\begin{equation}\label{eq:K.gaussian}
\K^su(x)=
\frac{C_{n,s}}{4(2\pi)^\frac{n}{2}(1-s)\,\kappa_{n,s}}
\inf_{A\in\A_2}\int_{\Rn}\I^su(x,Ay)\,e^{-\frac{|y|^2}{2}}dy.
\end{equation}
\end{cor}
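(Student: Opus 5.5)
The statement to prove is the last corollary: the Gaussian representations \eqref{representation-L} of $\mathcal L^s_A u(x)$ for $A\in\A_2$, and the formula \eqref{eq:K.gaussian} for $\K^s u(x)$ that follows from them.

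The plan is to combine Lemma~\ref{lem:newrep} with Corollary~\ref{lem:gauss-sphere}, taking $\J:=\I^s u(x,\cdot)$ and $Q:=A$.

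\emph{Step 1: hypotheses of Corollary~\ref{lem:gauss-sphere}.} By \eqref{I.s-homogeneous}, $\J$ is homogeneous of degree $2s$ on $\Rn\setminus\{0\}$. By Remark~\ref{remark.I.s.in.L1}, at the points under consideration we have $\I^s u(x,\cdot)\in L^1(\mathbb S^{n-1})$.

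If one wants the identity for every $A$ without that remark, there is an alternative. Use the representation from Lemma~\ref{lem:newrep}. Tonelli's theorem applies to the positive and negative parts of $\delta u$ separately, and all the changes of variables are with nonnegative integrands. The identity then holds in $[-\infty,\infty]$ whenever the integral is defined, which is guaranteed by the upper bound \eqref{bound.I.s}.

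\emph{Step 2: first identity.} Corollary~\ref{lem:gauss-sphere} gives
\[
\int_{\Rn}\I^s u(x,y)\,G_{A^2}(y)\,dy=\kappa_{n,s}\int_{\mathbb S^{n-1}}\I^s u(x,A\theta)\,d\H^{n-1}_\theta .
\]
Divide by $\kappa_{n,s}$ and insert the result into the first line of Lemma~\ref{lem:newrep}. This yields the first line of \eqref{representation-L}.

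\emph{Step 3: second identity.} Change variables $y=Az$ in the Gaussian integral. Then $dy=\det A\,dz$ and $\langle A^{-2}Az,Az\rangle=|z|^2$, since $A$ is symmetric. We also have $(\det A^2)^{1/2}=\det A$, so the determinant factors cancel. This leaves
\[
(2\pi)^{-n/2}\int_{\Rn}\I^s u(x,Az)\,e^{-|z|^2/2}\,dz,
\]
which is the second line of \eqref{representation-L}.

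\emph{Step 4: formula for $\K^s$.} The prefactor in \eqref{representation-L} does not depend on $A$. Taking the infimum over $A\in\A_2$ in the second line, and using the definition \eqref{frac-2-hessian} (equivalently \eqref{rep.formula.Ks}), gives \eqref{eq:K.gaussian}.

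\emph{Main obstacle.} The only delicate point is integrability in Step 1, that is, checking that Corollary~\ref{lem:gauss-sphere} may be applied to $\I^s u(x,\cdot)$. I will handle it as described there: cite Remark~\ref{remark.I.s.in.L1}, or fall back on Tonelli's theorem together with the one-sided bound \eqref{bound.I.s}. Everything else is a direct substitution.
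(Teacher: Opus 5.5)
Your proposal is correct and follows the paper's argument. The paper obtains the corollary by applying Corollary~\ref{lem:gauss-sphere} to the $2s$-homogeneous function $\I^s u(x,\cdot)$, which lies in $L^1(\mathbb S^{n-1})$ by Remark~\ref{remark.I.s.in.L1}, and inserting the result into Lemma~\ref{lem:newrep}; the change of variables $y=Az$ (already inside the proof of Corollary~\ref{lem:gauss-sphere}) gives the second line, and the positive $A$-independent prefactor lets the infimum pass through. Your fallback via \eqref{bound.I.s} is unnecessary and slightly circular, because that bound is itself derived assuming $\I^s u(x,\cdot)\in L^1(\mathbb S^{n-1})$; if you want a $[-\infty,\infty)$-valued version, use instead the integrability of the positive part of $\delta u(x,y)|y|^{-n-2s}$, which comes from touching from above together with $u\in L^1_{\omega_s}(\Rn)$.
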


We also recall the following standard convolution property for the kernels $G_Q$.

\begin{lem} \label{lem:gauss-conv}
Let $Q_1,Q_2\in\mathcal S^n_+$. Then, $G_{Q_1}*G_{Q_2}=G_{Q_1+Q_2}.$
\end{lem}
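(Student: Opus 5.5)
The plan is to use the Fourier transform, which turns convolution into a product. Since $G_Q$ is an integrable even function with total mass one, its Fourier transform (with the convention $\widehat f(\xi)=\int_{\Rn} f(y)e^{-i\langle \xi,y\rangle}\,dy$) is the classical Gaussian characteristic function
\[
\widehat{G_Q}(\xi)=e^{-\frac12\langle Q\xi,\xi\rangle},\qquad \xi\in\Rn .
\]
To obtain this formula, I will diagonalize $Q=O^tDO$ with $O$ orthogonal and change variables $y=O^tD^{1/2}z$. This reduces the claim to the product of $n$ one-dimensional Gaussian integrals $\int_{\R}(2\pi)^{-1/2}e^{-z^2/2}e^{-i\eta z}\,dz=e^{-\eta^2/2}$, which are standard (for instance by completing the square, or by solving the ODE $\phi'(\eta)=-\eta\phi(\eta)$ with $\phi(0)=1$). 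The Jacobian factor $(\det Q)^{1/2}$ cancels exactly with the normalization in the definition of $G_Q$.

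Next, both $G_{Q_1}$ and $G_{Q_2}$ lie in $L^1(\Rn)$, so $G_{Q_1}*G_{Q_2}\in L^1(\Rn)$, and by Fubini its Fourier transform is the product of the two transforms:
\[
\widehat{G_{Q_1}*G_{Q_2}}(\xi)=e^{-\frac12\langle Q_1\xi,\xi\rangle}e^{-\frac12\langle Q_2\xi,\xi\rangle}=e^{-\frac12\langle (Q_1+Q_2)\xi,\xi\rangle}=\widehat{G_{Q_1+Q_2}}(\xi).
\]
Here $Q_1+Q_2\in\S^n_+$, so $G_{Q_1+Q_2}$ is well defined. Since the Fourier transform is injective on $L^1(\Rn)$, the two integrable functions coincide almost everywhere. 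Both are continuous (the convolution of an $L^1$ function with a bounded continuous integrable function is continuous), so they coincide everywhere.

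There is no real obstacle here. The only point needing care is the Jacobian bookkeeping in the first step. As an alternative that avoids Fourier analysis, one could compute the convolution integral directly by completing the square in $z$ for $\langle Q_1^{-1}(y-z),y-z\rangle+\langle Q_2^{-1}z,z\rangle$. That route, however, requires the matrix identities $Q_1^{-1}-Q_1^{-1}(Q_1^{-1}+Q_2^{-1})^{-1}Q_1^{-1}=(Q_1+Q_2)^{-1}$ and $\det Q_1\det Q_2\det(Q_1^{-1}+Q_2^{-1})=\det(Q_1+Q_2)$, which makes the Fourier route cleaner.
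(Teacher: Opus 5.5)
Your proof is correct, but it takes a different route from the paper.

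\textbf{The paper's route.} The paper computes the convolution integral directly. For each $y$ it completes the square in $z$ around the point $z_0$ defined by $(Q_1^{-1}+Q_2^{-1})z_0=Q_1^{-1}y$. It then evaluates the remaining integral $\int e^{-\frac12\langle Pz,z\rangle}dz=(2\pi)^{n/2}(\det P)^{-1/2}$ with $P=Q_1^{-1}+Q_2^{-1}$. Finally it matches the normalizing constants through the factorization $Q_1^{-1}+Q_2^{-1}=Q_1^{-1}(Q_1+Q_2)Q_2^{-1}$.

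\textbf{This route is lighter than you suggest.} The same factorization gives both matrix identities you flagged. Taking determinants gives the determinant identity. Inverting gives $(Q_1^{-1}+Q_2^{-1})^{-1}=Q_2(Q_1+Q_2)^{-1}Q_1$, and hence
$Q_1^{-1}-Q_1^{-1}(Q_1^{-1}+Q_2^{-1})^{-1}Q_1^{-1}=Q_1^{-1}\bigl(I-Q_2(Q_1+Q_2)^{-1}\bigr)=(Q_1+Q_2)^{-1}$.

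\textbf{Comparison.} The paper's argument is fully elementary and proves the identity pointwise for every $y$. It needs neither the convolution theorem nor injectivity of the Fourier transform on $L^1(\Rn)$, and no upgrade from almost everywhere to everywhere. Your argument makes the structure transparent: the lemma reduces to additivity in $Q$ of the exponent $\frac12\langle Q\xi,\xi\rangle$, i.e.\ covariances of independent Gaussians add. Its only real computation, $\widehat{G_Q}$, is essentially the same Gaussian integral the paper uses. Your continuity step at the end is fine: it follows by dominated convergence because $G_{Q_2}$ is bounded and continuous.
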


\begin{proof}
Given $y$, let us take $z_0=z_0(y)$ such that $(Q_1^{-1}+Q_2^{-1})z_0=Q_1^{-1}y,$ and observe that
\[
\big\langle Q_1^{-1}(y-z),(y-z)\big\rangle +\big\langle Q_2^{-1}z,z\big\rangle
=\big\langle (Q_1^{-1}+Q_2^{-1})(z-z_0),(z-z_0)\big\rangle +\big\langle (Q_1+Q_2)^{-1}y,y\big\rangle.
\]
By definition of convolution and the above identity, 
\[
\begin{split}
(G_{Q_1} *\, & G_{Q_2})(y)=\\
&=
\frac{1}{(2\pi)^n(\det Q_1\det Q_2)^{\frac12}}
\int_{\mathbb R^n} \exp\Big[{-\tfrac12 \big(\langle Q_1^{-1}(y-z),(y-z)\rangle +\langle Q_2^{-1}z,z\rangle \big)}\Big]
dz\\
&=\frac{1}{(2\pi)^n(\det Q_1\det Q_2)^{\frac12}}\,
\exp\Big[-\tfrac12\big\langle (Q_1+Q_2)^{-1}y,y\big\rangle\Big]\\
&\hspace{119pt}\int_{\mathbb R^n}
\exp\Big[-\tfrac12\big\langle (Q_1^{-1}+Q_2^{-1})(z-z_0),(z-z_0)\big\rangle\Big]dz.
\end{split}
\]
For any matrix $P\in\S_{+}^n$, we can perform the change of variables $x=P^{1/2}z$, and use the one-dimensional Gaussian integral to obtain
\[
\int_{\mathbb R^n}
e^{-\frac12\langle Pz,z\rangle}\,dz
=\frac{1}{(\det P)^{1/2}}\int_{\mathbb R^n}
e^{-\frac12|x|^2}\,dx
 =\frac{1}{(\det P)^{1/2}}\prod_{j=1}^n\int_{\mathbb R} e^{-\frac{1}{2}x_j^2}\,dx_j
=\frac{(2\pi)^{n/2}}{(\det P)^{1/2}}.
\]
Applying this identity with $P=Q_1^{-1}+Q_2^{-1}$, and taking determinants in the matrix identity
$$Q_1^{-1}+Q_2^{-1}=Q_1^{-1}(Q_1+Q_2)Q_2^{-1},$$ we obtain $(G_{Q_1}*G_{Q_2})(y)=G_{Q_1+Q_2}(y)$ as desired.
\end{proof}

We are ready to prove our operator comparison result.

\begin{proof}[Proof of Proposition \ref{prop:tech-lem}]
Assume that a $C^2$-function touches $u$ from above at $x\in \Omega$. 
By the representation formula (Lemma~\ref{lem:newrep}),
\begin{equation} \label{eq:differ-ops}
 \mathcal{L}_{Q_2}^su(x)- \mathcal{L}_{Q_1}^su(x)
= \frac{C_{n,s}}{4(1-s)}\int_{\mathbb S^{n-1}} \bigl(\I^s u (x,Q_2\theta)-\I^s u (x,Q_1\theta)\bigr) \,d \H^{n-1}_\theta
\end{equation}
in the classical sense. Hence, we need to control the difference in the integrand.

Now, since $Q_2^2- Q_1^2>0$ by hypothesis, Lemma~\ref{lem:gauss-conv}, yields
$$
G_{Q_2^2}=G_{Q_1^2}*G_{Q_2^2-Q_1^2}.
$$
Therefore,
$$
\int_{\Rn}\I^s u(x,y)G_{Q_2^2}(y)\,dy = \int_{\Rn}\int_{\Rn}\I^s u (x,y+z)G_{Q_1^2}(y)G_{Q_2^2-Q_1^2}(z)\,dydz.
$$
Since $G_{Q_2^2-Q_1^2}(z)$ has integral one, and it is even, we have
\begin{align*}
&\int_{\Rn} \I^s u (x,y)G_{Q_2^2}(y)\,dy -\int_{\Rn}\I^s u (x,y)G_{Q_1^2}(y)\,dy \\
&\qquad=
\frac12 \int_{\Rn}\int_{\Rn} \big(\I^s u (x,y+z)+\I^s u (x,y-z)-2 \I^s u (x,y)\big) \,G_{Q_1^2}(y)\,G_{Q_2^2-Q_1^2}(z)\,dydz.
\end{align*}
Using \eqref{NSC}, we obtain
\begin{equation}\label{lem:comp.eq1}
\begin{split}
&\int_{\Rn} \I^s u (x,y)G_{Q_2^2}(y)\,dy -\int_{\Rn}\I^s u (x,y)G_{Q_1^2}(y)\,dy \\
&\qquad\qquad\leq
K\int_{\Rn} G_{Q_1^2}(y)\,dy \int_{\Rn} |z|^{2s}G_{Q_2^2-Q_1^2}(z)\,dz
=K\int_{\Rn} |z|^{2s}G_{Q_2^2-Q_1^2}(z)\,dz.
\end{split}
\end{equation}

By Remark \ref{remark.I.s.in.L1}, the $2s$-homogeneous function $\I^s u(x,\cdot)$ is in $L^1(\mathbb S^{n-1})$, and satisfies the 
assumptions of Corollary~\ref{lem:gauss-sphere}. Hence, 
\begin{equation}\label{lem:comp.eq2}
\int_{\mathbb R^n} \I^s u (x,y)G_{Q_i^2}(y)\,dy = \kappa_{n,s} \int_{\mathbb S^{n-1}} \I^s u (x,Q_i\theta)\, d\mathcal H^{n-1}_\theta,\qquad\textrm{for}\ i=1,2.
\end{equation}
Moreover, for any $Q\in\S^n_+$, again using Corollary~\ref{lem:gauss-sphere}, along with Jensen's inequality, and Lemma~\ref{lem:average}, 
\begin{equation}\label{lem:comp.eq3}
 \begin{split}
\int_{\Rn} |z|^{2s}G_{Q^2}(z)\,dz
&=\kappa_{n,s} \int_{\mathbb S^{n-1}} |Q\theta|^{2s}\, d\mathcal H^{n-1}_\theta
=\kappa_{n,s}\,\omega_{n-1} \fint_{\mathbb S^{n-1}} |Q\theta|^{2s}\, d\mathcal H^{n-1}_\theta
\\
&\leq\kappa_{n,s}\,\omega_{n-1} \left(\fint_{\mathbb S^{n-1}} |Q\theta|^{2}\, d\mathcal H^{n-1}_\theta\right)^s
=\kappa_{n,s}\,\omega_{n-1}\,\left(\frac{\tr(Q^2)}{n}\right)^s.
\end{split} 
\end{equation}

Together, \eqref{lem:comp.eq1}, \eqref{lem:comp.eq2}, and \eqref{lem:comp.eq3} with $Q=\big(Q_2^2-Q_1^2\big)^{1/2}$ 
(which is well defined since $Q_2^2>Q_1^2$) yield
$$
\int_{\mathbb S^{n-1}} \bigl(\I^s u (x,Q_2\theta)-\I^s u (x,Q_1\theta)\bigr) \,d \H^{n-1}_\theta 
\leq  K\, \frac{\omega_{n-1}}{n^s}\,\Big(\tr(Q_2^2-Q_1^2)\Big)^s,
$$
which, combined with \eqref{eq:differ-ops}, gives
$$
 \mathcal{L}_{Q_2}^su(x)- \mathcal{L}_{Q_1}^su(x)
\leq K\,\frac{\omega_{n-1}}{4\,n^s}\,\frac{C_{n,s}}{(1-s)}
\left(\tr(Q_2^2-Q_1^2)\right)^s
=c_{K,s}\left(\tr(Q_2^2-Q_1^2)\right)^s.
$$
By \eqref{eq:Cns.limit}, $c_{K,s}\to K$ as $s\to 1$.
\end{proof}

\section{Proof of the key estimate and main result} \label{sec:keyest}

In this section, we adapt to the nonlocal setting the strategy discussed in Section~\ref{sec:idealocal}. 
The main result, Theorem~\ref{thm:main}, follows from the following key estimate.

\begin{thm} \label{thm:keyest}
Let $u$ be as in Theorem~\ref{thm:main}. Then, for all $A\in \A_2$,
\begin{equation*} 
\mathcal{L}_A^s u \geq \gamma_0 \|A\|_F^{2s} \quad \text{ in } \Omega,
\end{equation*}
in the viscosity sense, where $\gamma_0>0$ depends only on $n$, $s$, $K$, and $\eta_0$.
Moreover, $\gamma_0\to \gamma_0^{\rm loc}$ as $s\to1$, where $\gamma_0^{\rm loc}$ is given in \eqref{eq:gamma-classical}.
\end{thm}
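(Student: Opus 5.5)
We mirror the local argument of Section~\ref{sec:idealocal}, replacing the linearity of the trace by the operator comparison of Proposition~\ref{prop:tech-lem}. Fix $x\in\Omega$ at which $u$ is touched from above by a $C^2$-function. By Lemma~\ref{lem:classicalsense}, $\mathcal L^s_Bu(x)\geq\K^su(x)\geq\eta_0$ holds classically for every $B\in\A_2$. We work with such $x$, and the viscosity statement follows in the usual way.

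First we choose $\delta$ by analogy with \eqref{fixing.delta.local.new}. Set $\delta:=\big(\eta_0/(2c_{K,s}n^s)\big)^{1/s}$. By the compatibility condition of Lemma~\ref{lemma:compatibility}, $\eta_0\leq c_{K,s}m_*^s$. One should check that this gives $\delta\leq t_*/2$; if it does not, we shrink $\delta$ by a dimensional factor, which only changes constants. Let $T_0=2(n-1)/\delta$.

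If $\|A\|_F^2<T_0$, then
\[
\mathcal L_A^su(x)\geq\eta_0\geq\eta_0T_0^{-s}\|A\|_F^{2s}.
\]

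If $\|A\|_F^2\geq T_0$, we use Corollary~\ref{cor:pert.tildeA} to obtain $A_\delta\in\A_2$ with $A_\delta^2=\tau'\widetilde A^2+\delta'I$. Set $Q_1:=\sqrt{\tau'}\,\widetilde A$ and $Q_2:=A_\delta$. Then $Q_2^2-Q_1^2=\delta'I>0$ and $\tr(Q_2^2-Q_1^2)=n\delta'\leq n\delta$.

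Before applying Proposition~\ref{prop:tech-lem} we need the finiteness hypothesis \eqref{eq:finiteness-hyp}. For $Q_2$ it holds because $\mathcal L^s_{A_\delta}u(x)\geq\eta_0$. For $Q_1=\sqrt{\tau'}\|A\|_F^{-1}A$, the representation formula of Lemma~\ref{lem:newrep} and the $2s$-homogeneity \eqref{I.s-homogeneous} give
\[
\mathcal L^s_{cA}u(x)=c^{2s}\mathcal L^s_Au(x),
\]
so finiteness for $Q_1$ reduces to finiteness for $A\in\A_2$, which holds since $\mathcal L^s_Au(x)\geq\eta_0$. This reduction is the main technical point to verify. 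If the inequality were only one-sided one could argue via $\I^su\in L^1(\mathbb S^{n-1})$ (Remark~\ref{remark.I.s.in.L1}), but here both values are finite.

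Proposition~\ref{prop:tech-lem} then gives
\[
\eta_0\leq\mathcal L^s_{A_\delta}u(x)\leq\mathcal L^s_{Q_1}u(x)+c_{K,s}(n\delta)^s=\Big(\frac{\tau'}{\|A\|_F^2}\Big)^s\mathcal L^s_Au(x)+\frac{\eta_0}{2},
\]
by the choice of $\delta$. Using the upper bound $\tau'\leq 2(n-1)/(\sqrt3\,\delta)$ from \eqref{eq:primed.brackets}, we obtain
\[
\mathcal L^s_Au(x)\geq\frac{\eta_0}{2}\Big(\frac{\sqrt3\,\delta}{2(n-1)}\Big)^s\|A\|_F^{2s}.
\]

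We take $\gamma_0$ to be the minimum of the constants from the two cases, namely $\eta_0\big(\delta/(2(n-1))\big)^s$ and $\tfrac{\eta_0}{2}\big(\sqrt3\,\delta/(2(n-1))\big)^s$. This depends only on $n,s,K,\eta_0$. As $s\to1$ we have $c_{K,s}\to K$, so $\delta\to\eta_0/(2nK)$ and $\gamma_0\to\gamma_0^{\rm loc}$ of \eqref{eq:gamma-classical}, since every expression above is continuous in $s$.
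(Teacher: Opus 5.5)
Your proof is correct and is essentially the paper's argument: the same choice of $\delta$, the same two cases according to whether $\|A\|_F^2<2(n-1)/\delta$, and the same use of Corollary~\ref{cor:pert.tildeA} and Proposition~\ref{prop:tech-lem}. The only difference is cosmetic: you scale $\widetilde A$ up by $\sqrt{\tau'}$, whereas the paper scales $A_\delta$ down, and these are equivalent by $2s$-homogeneity. The check you left open does hold without shrinking $\delta$: from $\eta_0\le c_{K,s}m_*^s$ one gets $\delta\le 2^{-1/s}\,m_*/n=2^{-1/s}\,t_*\le t_*/2$. This matters, because shrinking $\delta$ by a dimensional factor would break the exact limit $\gamma_0\to\gamma_0^{\rm loc}$.
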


\begin{proof}
Assume that a $C^2$-function touches $u$ from above at $x\in\Omega$. By Lemma~\ref{lem:classicalsense},
$\mathcal K^su(x)\geq\eta_0$ holds in the classical sense. In particular,
$$
\mathcal L_A^su(x)\geq\eta_0, \quad \text{ for all } A\in \A_2.
$$

Given $A\in\A_2$, let
$$
\widetilde A:=\frac{1}{\|A\|_F} A.
$$
Let $c_{K,s}$ be the constant in Proposition~\ref{prop:tech-lem}, and fix
\begin{equation}
\delta:=\left(\frac{\eta_0}{2c_{K,s}n^s}\right)^{1/s}.
\label{eq:fixing-delta-gaussian}
\end{equation}
The compatibility condition in \eqref{compatibility-non-local} can be written as 
$$
\eta_0 \leq c_{K,s}\,m_*^s.
$$
Hence, $\delta \leq t_*/2,$ so it satisfies the assumption of Corollary~\ref{cor:pert.tildeA}.

Assume first that
$$
\|A\|_F^2<\frac{2(n-1)}{\delta}.
$$
By the homogeneity of the operator,
\begin{equation}\label{main.est.first.half}
\mathcal L_{\widetilde A}^su(x) = \frac{1}{\|A\|_F^{2s}}\mathcal L_A^su(x) 
\geq \left(\frac{1}{2(n-1)}\right)^s \eta_0 \delta^s
\geq \frac{1}{2} \left(\frac{\sqrt3}{2(n-1)}\right)^s \eta_0\delta^s.
\end{equation}
Assume now that
$$
\|A\|_F^2\geq\frac{2(n-1)}{\delta}.
$$
Let $A_{\delta}\in\A_2$, $\tau'$, and $\delta'$ be as in Corollary~\ref{cor:pert.tildeA}, so that
$$
A_{\delta}^2=\tau'\widetilde A^2+\delta'I,
\qquad
\frac{n-1}{2\delta}\leq\tau'
\leq\frac{2(n-1)}{\sqrt3\,\delta},
\qquad
\frac{\delta}{2}\leq\delta'\leq\delta.
$$
Set
$$
B:=\frac{1}{\sqrt{\tau'}}A_{\delta}.
$$
Then
$$
B^2=\widetilde A^2+\frac{\delta'}{\tau'}I,
$$
and, since $A_{\delta}\in\A_2$,
\begin{equation}\label{main.est.first.half.for.B}
\mathcal L_B^su(x)=\frac{1}{(\tau')^s}\mathcal L_{A_{\delta}}^su(x)
\geq \frac{\eta_0}{(\tau')^s}.
\end{equation}

Note that $\widetilde A$ and $B$ satisfy hypothesis \eqref{eq:finiteness-hyp}: for $\widetilde A$ by homogeneity, i.e.,
$$
\mathcal L^s_{\widetilde A}u(x)=\|A\|_F^{-2s}\mathcal L^s_{A}u(x)
\geq\|A\|_F^{-2s}\eta_0,
$$
and for $B$ by \eqref{main.est.first.half.for.B}. Moreover, $B^2-\widetilde A^2=\frac{\delta'}{\tau'}\,I>0$.
Applying Proposition~\ref{prop:tech-lem} with $Q_1=\widetilde A$ and $Q_2=B$, we obtain
\begin{align*}
\mathcal L_{\widetilde A}^su(x) &\geq \mathcal L_B^su(x) - c_{K,s}\left(\frac{n\delta'}{\tau'}\right)^s
 \geq \frac{\eta_0-c_{K,s}n^s(\delta')^s}{(\tau')^s}
\geq \frac{\eta_0-c_{K,s}n^s\delta^s}{(\tau')^s} = \frac{\eta_0}{2(\tau')^s},
\end{align*}
where in the last equality we used \eqref{eq:fixing-delta-gaussian}. Using the upper bound for $\tau'$,
$$
\mathcal L_{\widetilde A}^su(x) \geq \frac{1}{2} \left(\frac{\sqrt3}{2(n-1)}\right)^s \eta_0\delta^s,
$$
the same estimate we obtained in \eqref{main.est.first.half}.

Defining
$$
\gamma_0:=\frac{1}{2} \left(\frac{\sqrt3}{2(n-1)}\right)^s \eta_0\delta^s = \left(\frac{\sqrt3}{2(n-1)}\right)^s \frac{\eta_0^2}{4c_{K,s}n^s}, 
$$
and since
$\mathcal L_A^su=\|A\|_F^{2s}\mathcal L_{\widetilde A}^su$, we conclude that
$$
\mathcal L_A^su(x)\geq\gamma_0\|A\|_F^{2s}.
$$

It remains to study the asymptotic behavior of $\gamma_0$ as $s\to 1$. Since $c_{K,s}\to K$,
$$
\gamma_0 \to \frac{\sqrt3\,\eta_0^2}{8n(n-1)K} =\gamma_0^{\rm loc},
$$
where the local constant $\gamma_0^{\rm loc}$ is defined in \eqref{eq:gamma-classical}.
\end{proof}

\medskip

Finally, we prove the main theorem, using the key estimate.

\begin{proof}[Proof of Theorem~\ref{thm:main}]
Assume that a $C^2$-function touches $u$ from above at $x\in \Omega$. 
By Theorem~\ref{thm:keyest}, for any $A\in \A_2$, we have
\begin{equation} \label{eq:keyest2}
\mathcal{L}_A^s u (x) = \frac{C_{n,s}}{2} \int_{\R^n} \frac{\delta u(x,Ay)}{|y|^{n+2s}}\, dy\geq \gamma_0 \|A\|_F^{2s},
\end{equation}
 where $\gamma_0>0$ depends only on $n$, $s$, $K$, and $\eta_0$. Fix $\Lambda\geq 1$ to be chosen. Note that if $A\in \A_2\setminus \mathcal{N}(\Lambda)$, then there is $\lambda_i:=\lambda_i(A)$ such that $\lambda_i<\Lambda^{-1}$ or $\lambda_i>\Lambda$. If $\lambda_i<\Lambda^{-1}$, then by Lemma~\ref{lema1}, we have $\lambda_j\geq \lambda_i^{-1}>\Lambda$, for all $j\neq i$. Hence, in both cases, it holds that
 $$
\|A\|_F^2=\tr(A^2)\geq \Lambda^2,
 $$
 which together with \eqref{eq:keyest2} yields
 \begin{equation} \label{eq:oplarge}
 \frac{C_{n,s}}{2} \inf_{A\in \A_2\setminus \mathcal{N}(\Lambda)} \int_{\R^n} \frac{\delta u(x,Ay)}{|y|^{n+2s}}\, dy \geq \gamma_0\Lambda^{2s}.
 \end{equation}
 
Now we use \eqref{compatibility-non-local}.
Taking $\Lambda\geq 1$ large enough so that
\begin{equation}\label{choose-Lambda}
 \gamma_0\Lambda^{2s} > \frac{\omega_{n-1}}{4}\cdot \frac{C_{n,s}}{1-s}\cdot \Big(\frac{2(n-1)}{n}\Big)^{\frac{s}{2}}K,
\end{equation}
we see from \eqref{eq:oplarge} that the matrices in $\A_2\setminus \mathcal{N}(\Lambda)$ do not contribute to the infimum, and thus, 
$$
\K^s u(x) = \frac{C_{n,s}}{2} \inf_{A\in \A_2\cap\mathcal{N}(\Lambda)} \int_{\Rn} \frac{\delta u(x,Ay)}{|y|^{n+2s}}\, dy,
$$
 i.e., $\K^s$ is uniformly elliptic in $\Omega$. Moreover, 
 $\Lambda=O(1)$ as $s\to 1$, since we can choose $\Lambda$ uniformly in \eqref{choose-Lambda}.

It remains to see that the infimum over the compact set $\A_2\cap\mathcal N(\Lambda)$ (closed by Lemma~\ref{lem:char} and bounded) is attained. It is enough to show that
$A\mapsto\mathcal L^s_Au(x)$ is continuous on $\mathcal N(\Lambda)$, which follows from the dominated convergence theorem.
Indeed, by \eqref{alternative.writing.L.A1}--\eqref{alternative.writing.L.A2},  we can write
$$
\mathcal L^s_Au(x)=\frac12\int_{\Rn}\delta u(x,y)\,k_A(y)\,dy,
$$
and by \eqref{alternative.writing.L.A3}, for all $A\in\mathcal N(\Lambda)$ it holds
$$
|\delta u(x,y)k_A(y)|\leq\Lambda^{2n+2s}\,C_{n,s}\,\frac{|\delta u(x,y)|}{|y|^{n+2s}}.
$$  
Note that $\delta u(x,y)\,|y|^{-n-2s}\in L^1(\Rn)$. Its positive part is integrable since $u$ is touched from above by a $C^2$-function at $x$ and
$u\in L^1_{\omega_s}(\Rn)$, and its negative part since $t_*^{\,s}\,\Delta^su(x)=\mathcal L^s_{A_*}u(x)\geq\eta_0$.
Since $k_A(y)$ is continuous in $A$ for each $y\neq0$, dominated convergence gives the continuity, and the infimum is a minimum.
\end{proof}

\section{On the notion of nonlocal semiconcavity}\label{sec:NSC.discussion}

In this section, we discuss further the geometric meaning of condition \eqref{NSC}, and show that the quantity $\mathcal Q^s u(x;y,z)$  contains transversal (i.e., $y,z$ not collinear) information the one-dimensional fractional Laplacian $\Theta^s u(x,e)$ does not.
First, we point out that $\mathcal Q^s u(x;\cdot,\cdot)$ involves second-order differences of the even function $\I^su(x,\cdot)$, which is itself built from second-order differences of $u$.
Altogether, $\mathcal Q^s$ is built from fourth-order differences of $u$, which motivates the following definition:
\begin{defn}
Given $x\in\R^n$ and $h,h'\in\R^n$,
we define the mixed fourth-order difference at $x$ in the directions of $h,h'$ as the second-order difference of $\delta u(\cdot,h')$ at $x$ in the direction of $h$, i.e.,
\begin{equation}\label{def.delta2}
\begin{split}
\delta^2u(x;h,h')&:=\delta\big(\delta u(\cdot,h')\big)(x,h)\\
&=\delta u(x+h,h')+\delta u(x-h,h')-2\delta u(x,h')\\
&=u(x+h+h')+u(x+h-h')+u(x-h+h')+u(x-h-h')\\
&\hspace{75pt}
-2u(x+h)-2u(x-h)-2u(x+h')-2u(x-h')+4u(x).
\end{split}
\end{equation}
We observe that the fourth-order difference $\delta^2u(x;h,h')$ is symmetric in $(h,h')$. 
\end{defn}

\begin{rem}
For $u$ smooth enough, the function $\delta^2u(x;h,h')$ can be seen as the standard finite-difference discretization of the mixed fourth
derivative $\partial_h^2\partial_{h'}^2u(x)=D^4u(x)[h,h,h',h']$, sampled on the nine-point stencil $\{x\pm h\pm h',\ x\pm h,\ x\pm h',\ x\}$, whose four outer points $x\pm h\pm h'$ are the vertices of a parallelogram. It is worth mentioning that  the fourth-order differences  $\delta^2u$ also appear indirectly in the proof of the nonlocal Evans--Krylov theorem \cite{Caffarelli.Silvestre2}. A key step in their argument is to show that second differences $\delta u(\cdot,h')$ of solutions of concave
equations are subsolutions of the extremal operators, which ultimately involves integrals of $\delta^2u$.
\end{rem}

The mixed fourth-order difference $\delta^2u$ has an interpretation as the defect in the parallelogram identity for the second differences of $u$. Indeed, recall that a continuous function satisfies the Jordan–von Neumann functional equation, or parallelogram identity,
\begin{equation}\label{paralellogram.f} f(y+z)+f(y-z)=2f(y)+2f(z),\qquad\textrm{for all}\  y,z\in\R^n
\end{equation}
if and only if it is a quadratic form, see \cite{JvN,Aczel-Dhombres}.
Then, since for every function $u:\Rn\to\R$  it holds
\begin{equation}\label{eq:parallelogram.delta.u}
\delta u(x,y+z)+\delta u(x,y-z)
=2\,\delta u(x,y)+2\,\delta u(x,z)+\delta^2u(x;y,z),
\qquad
\textrm{for all}\ x,y,z\in\Rn,
\end{equation}
the fourth-order difference $\delta^2u(x;y,z)$ measures how far the even
function $\delta u(x,\cdot)$ is from being a quadratic form.

The following result quantifies the defect in the parallelogram identity for $\I^su$, with a remainder built from fourth-order differences of $u$ that vanishes as $s\to1$. Note that the pointwise statement below concerns $\I^su(x,\cdot)$ along a fixed direction, and therefore requires some control of $u$ at infinity that cannot be provided by the averaged condition $u\in L^1_{\omega_s}(\Rn)$. We assume, for simplicity, a polynomial growth at infinity.

\begin{lem}\label{lem:second.diff.Is}
Let $u\in C(\Rn)\cap L^1_{\omega_s}(\Rn)$, $x\in\Rn$, and $y,z\in\Rn$.  Then, whenever $\I^s u(x,y\pm z)$, $\I^s u(x,y),$  and $\I^s u(x,z)$ are all finite, 
\begin{equation}\label{eq:second.diff.Is}
\I^s u(x,y+z)+\I^s u(x,y-z)=2\,\I^s u(x,y)+2\,\I^s u(x,z)+\mathcal R^s u(x;y,z),
\end{equation}
where
\begin{equation}\label{def.Rs}
\mathcal R^s u(x;y,z):=2(1-s)\int_0^\infty\frac{\delta^2u(x;ry,rz)}{r^{1+2s}}\,dr .
\end{equation}
Moreover, assume that $u\in C^2$ in a neighborhood of $x$, and that there exist $C>0$
and $\beta\in[0,2)$ with $|u(\xi)|\leq C(1+|\xi|^{\beta})$ for all $\xi\in\Rn$. Then
\[
\lim_{s\to1}\mathcal R^s u(x;y,z)=0\qquad\textrm{for all }y,z\in\Rn .
\]
\end{lem}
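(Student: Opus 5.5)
The identity \eqref{eq:second.diff.Is} is a direct consequence of the pointwise parallelogram relation \eqref{eq:parallelogram.delta.u}. I will apply \eqref{eq:parallelogram.delta.u} with $(ry,rz)$ in place of $(y,z)$, for each $r>0$:
\[
\delta u(x,r(y+z))+\delta u(x,r(y-z))=2\,\delta u(x,ry)+2\,\delta u(x,rz)+\delta^2u(x;ry,rz).
\]
Then I will multiply by $2(1-s)r^{-1-2s}$ and integrate over $(0,\infty)$, using the half-line formula for $\I^s u$ from item (i) of the remark on the properties of $\I^s u$.

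The only subtlety is integrability. Since the four quantities $\I^s u(x,y\pm z)$, $\I^s u(x,y)$, $\I^s u(x,z)$ are assumed finite, I will read them as absolutely convergent integrals, or at least as limits of $\int_\varepsilon^R$ truncations. Because the identity holds pointwise in $r$, the integral of $\delta^2u(x;ry,rz)r^{-1-2s}$ over $[\varepsilon,R]$ is a finite linear combination of the four truncated integrals. Letting $\varepsilon\to0$ and $R\to\infty$ shows that $\mathcal R^s u$, understood in the same sense, exists and satisfies \eqref{eq:second.diff.Is}.

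For the limit as $s\to1$, I will split $\mathcal R^s u=2(1-s)\big(\int_0^{r_0}+\int_{r_0}^\infty\big)$, where $r_0>0$ is chosen so that $x\pm r(y\pm z)$, $x\pm ry$, $x\pm rz$ stay in the neighborhood where $u\in C^2$ for all $r\le r_0$.

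\emph{Tail.} The growth assumption gives $|\delta^2u(x;ry,rz)|\le C'(1+r^\beta)$, with $C'$ depending on $C,|y|,|z|,|x|$. Hence
\[
2(1-s)\int_{r_0}^\infty \frac{1+r^\beta}{r^{1+2s}}\,dr
\]
is bounded by $(1-s)$ times a quantity that stays bounded as $s\to1$, because $2s>\beta$ once $s$ is close to $1$ (here we use $\beta<2$). So the tail tends to $0$.

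\emph{Near part.} I expect this to be the main obstacle. Since $\delta^2u$ is a fourth-order difference, one would like $|\delta^2u(x;ry,rz)|=o(r^2)$ as $r\to0$. With only $C^2$ regularity this follows by writing
\[
\delta^2u(x;h,h')=\delta u(x+h,h')+\delta u(x-h,h')-2\,\delta u(x,h'),
\]
and expressing each $\delta u(\xi,h')$ as $\int_0^1(1-t)\big[\langle D^2u(\xi+th')h',h'\rangle+\langle D^2u(\xi-th')h',h'\rangle\big]dt$. This gives $|\delta^2u(x;ry,rz)|\le 4r^2|z|^2\,\omega_{D^2u}(2r(|y|+|z|))$, where $\omega_{D^2u}$ is the modulus of continuity of $D^2u$ near $x$. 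In particular $|\delta^2u|\le Mr^2$ on $(0,r_0)$, so
\[
2(1-s)\int_0^{r_0}Mr^{1-2s}\,dr=\frac{2(1-s)M r_0^{2-2s}}{2-2s}.
\]
This is bounded but does not vanish, so the crude $O(r^2)$ bound is not enough and the $o(r^2)$ improvement is essential.

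I will use it as follows: given $\epsilon>0$, choose $r_1\le r_0$ with $|\delta^2u|\le\epsilon r^2$ on $(0,r_1)$. The contribution of $(0,r_1)$ is then at most $\epsilon r_1^{2-2s}\le\epsilon$ for $s$ near $1$. The contribution of $(r_1,\infty)$ is $O(1-s)$ by the same argument as the tail. Letting $s\to1$ and then $\epsilon\to0$ concludes the proof.
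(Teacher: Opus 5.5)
Your proposal is correct and follows essentially the same route as the paper. Both integrate the pointwise identity $\delta u(x,r(y+z))+\delta u(x,r(y-z))=2\,\delta u(x,ry)+2\,\delta u(x,rz)+\delta^2u(x;ry,rz)$ against $2(1-s)r^{-1-2s}$, split at $r_0$, use the growth bound to make the tail $O(1-s)$, and use the Taylor-formula estimate $|\delta^2u(x;ry,rz)|=o(r^2)$ via the modulus of continuity of $D^2u$ together with an $\varepsilon$-splitting of the near part. Your bound $4r^2|z|^2\omega_{D^2u}(2r(|y|+|z|))$ is cruder than the paper's $2r^2|z|^2\omega(r|y|)$ but suffices, and your handling of integrability via truncations is slightly more careful than the paper's.
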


\begin{proof}
By \eqref{expansion.second.differences} and \eqref{def.delta2},
\[
\begin{split}
\delta u(x,r(y+z))+\delta u(x,r(y-z))
&=2\,\delta u(x,ry)+ \delta u(x+ry,rz)+\delta u(x-ry,rz)\\
&= 2\delta u(x,ry) +2\,\delta u(x,rz) +\delta^2u(x;ry,rz).
\end{split}
\]
Dividing both sides by $r^{1+2s}$ and integrating, we get \eqref{eq:second.diff.Is}.

Let us now prove that $\mathcal R^su(x;y,z)\to0$ as $s\to1$. We may assume
$y,z\neq0$, since otherwise $\delta^2u(x;ry,rz)\equiv0$. Let $\rho>0$ be such that
$u\in C^2(B_{2\rho}(x))$, and let $\omega$ denote a modulus of continuity of $D^2u$
on $B_{2\rho}(x)$. Fix $r_0\in(0,1]$ with $r_0(|y|+|z|)\leq\rho$, and split
\[
\mathcal R^su(x;y,z)=2(1-s)\int_0^{r_0}\frac{\delta^2u(x;ry,rz)}{r^{1+2s}}\,dr
+2(1-s)\int_{r_0}^{\infty}\frac{\delta^2u(x;ry,rz)}{r^{1+2s}}\,dr
=: I_1+I_2 .
\]
For $I_1$, observe that when $\xi\in B_\rho(x)$ and $|h'|\leq\rho$, by Taylor's formula
\[
\delta u(\xi,h')=\int_0^1(1-t)\big(\langle D^2u(\xi+th')h',h'\rangle+\langle D^2u(\xi-th')h',h'\rangle\big)\,dt,
\]
so that, for $\xi_1,\xi_2\in B_\rho(x)$,
$|\delta u(\xi_1,h')-\delta u(\xi_2,h')|\leq|h'|^2\,\omega(|\xi_1-\xi_2|)$. Applying this with $h'=rz$ and $\xi_1=x\pm ry$, $\xi_2=x$, we obtain
\[
|\delta^2u(x;ry,rz)|\leq 2\,r^2|z|^2\,\omega(r|y|),\qquad 0<r\leq r_0 .
\]
Hence, for any $\varepsilon\in(0,r_0)$,
\[
|I_1|\leq 4(1-s)|z|^2\int_0^{r_0}\frac{\omega(r|y|)}{r^{2s-1}}\,dr
\leq 4|z|^2\bigg(\frac{\omega(\varepsilon|y|)\,\varepsilon^{2-2s}}{2}
+\omega(r_0|y|)\,(1-s)\int_\varepsilon^{r_0}r^{1-2s}\,dr\bigg).
\]
As $s\to1$, the first term is bounded, while the second one, for fixed $\varepsilon$,
\[
(1-s)\int_\varepsilon^{r_0}r^{1-2s}\,dr=\frac12\big(r_0^{2-2s}-\varepsilon^{2-2s}\big)\to0
\qquad
\textrm{as}\ s\to1.
\]
Therefore, $\limsup_{s\to1}|I_1|\leq2|z|^2\omega(\varepsilon|y|)$ for every
$\varepsilon>0$, and letting $\varepsilon\to0$ gives $I_1\to0$.

For $I_2$, the growth assumption yields, for $r\geq r_0$,
\[
|\delta^2u(x;ry,rz)|\leq 16\,C\Big(1+(|x|+r|y|+r|z|)^{\beta}\Big)
\leq C'\,r^{\beta},
\]
with $C'$ independent of $s$. Thus, for $s>\beta/2$,
\[
|I_2|\leq 2(1-s)\,C'\int_{r_0}^\infty r^{\beta-1-2s}\,dr
=\frac{2(1-s)\,C'\,r_0^{\beta-2s}}{2s-\beta}\longrightarrow0
\qquad\textrm{as }s\to1,
\]
since $\beta<2$. This completes the proof.
\end{proof}

The defect $\mathcal R^s u$ has no sign in general, not even for convex $u$, since convexity imposes no sign on fourth derivatives.
Let us also point out that classical convexity is a strong assumption in this
context, since nonconstant convex functions grow at least linearly, which requires $s>1/2$.

\begin{rem}\label{rem:NSC.vs.semiconcavity}
 By \eqref{def.alternate.I.s} and \eqref{eq:second.diff.Is}, condition
\eqref{NSC} is equivalent to
\[
|z|^{2s}\,\Theta^s u\Big(x,\tfrac{z}{|z|}\Big)+\tfrac12\,\mathcal R^su(x;y,z)\leq K|z|^{2s},
\qquad y,z\in\Rn,
\]
and, under the assumptions of the lemma, letting $s\to1$ and using
$\Theta^su(x,e)\to u_{ee}(x)$ we recover the classical condition
$\langle D^2u(x)z,z\rangle\leq K|z|^2$. 
\end{rem}

In view of the notion of fractional convexity introduced in \cite{DelPezzo-Quaas-Rossi}, see \eqref{FCX},  
a reasonable candidate for a definition of fractional semiconcavity would be the
boundedness from above of the one-dimensional fractional Laplacians, i.e.,
\begin{equation}\label{FSC}
    \Theta^su(x,e)\leq K,\qquad\textrm{for all}\ e\in\mathbb S^{n-1}.
\end{equation}
In this line, Remark~\ref{rem:NSC.vs.semiconcavity}, could suggest that Theorem~\ref{thm:main} might hold replacing \eqref{NSC} by \eqref{FSC}. However, this is not the case, and we will provide a counterexample in Section \ref{sec:counterexample}.

Another way to see this fact is given in the lemma below, where we show that the term $\mathcal R^s$ plays a crucial role in the above Proposition~\ref{prop:tech-lem}. More precisely, in the local argument of Section~\ref{sec:idealocal}, the key step
\eqref{comentario.sobre.semiconcavidad.Laplacian} rests on the fact that the
map
\[
Q\longmapsto \mathcal L_Q u(x)=\tr\big(Q^2D^2u(x)\big)
\]
is linear in $Q^2$. This allows one to split a coefficient matrix into
the component along the asymptotic cone and the axial component and treat them separately. In the nonlocal
setting, the corresponding map $Q\mapsto\mathcal L^s_Qu(x)$ is not linear in
$Q^2$, but satisfies the operator comparison estimate in Proposition~\ref{prop:tech-lem}. In the next lemma, we will quantify the estimate through the defect $\mathcal R^s$.

\begin{lem}\label{lem:averaged.parallelogram}
Let $s\in(0,1)$, and let $u\in C(\Rn)\cap L^1_{\omega_s}(\Rn)$ satisfy \eqref{NSC} at a point $x$ where a $C^2$-function touches $u$ from above. Let
$Q_1,Q_2\in\S^n_+$ with $Q_2^2>Q_1^2$, and assume
\begin{equation}\label{eq:finiteness.three}
\mathcal L^s_{Q_1}u(x),\ \mathcal L^s_{Q_2}u(x),\ \mathcal L^s_{(Q_2^2-Q_1^2)^{1/2}}u(x)>-\infty .
\end{equation}
Then, $\mathcal R^su(x;y,z)$ is finite for a.e.~$(y,z)$, and integrable against $G_{Q_1^2}(y)G_{Q_2^2-Q_1^2}(z)$. Moreover, we have the identity
\begin{equation}\label{eq:additivity.defect}
\mathcal L^s_{Q_2}u(x)-\mathcal L^s_{Q_1}u(x)
=\mathcal L^s_{(Q_2^2-Q_1^2)^{1/2}}u(x)+\overline{\mathcal R^s}_{Q_1,Q_2}u(x)
\end{equation}
for
\begin{equation}\label{def.averaged.Rs}
\overline{\mathcal R^s}_{Q_1,Q_2}u(x)
:=\frac{C_{n,s}}{8(1-s)\kappa_{n,s}}\int_{\Rn}\int_{\Rn}\mathcal R^su(x;y,z)\,G_{Q_1^2}(y)\,G_{Q_2^2-Q_1^2}(z)\,dy\,dz.
\end{equation}
\end{lem}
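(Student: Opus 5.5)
The plan is to redo the Gaussian-convolution argument from the proof of Proposition~\ref{prop:tech-lem}, but this time keeping the exact parallelogram identity of Lemma~\ref{lem:second.diff.Is} in place of the one-sided bound \eqref{NSC}. Write $P:=Q_2^2-Q_1^2>0$. Corollary~\ref{lem:gauss-sphere} together with the representation \eqref{representation-L} gives, for $Q\in\{Q_1,Q_2,P^{1/2}\}$,
\[
\mathcal L^s_Qu(x)=\frac{C_{n,s}}{4(1-s)\kappa_{n,s}}\int_{\Rn}\I^su(x,y)\,G_{Q^2}(y)\,dy .
\]
Lemma~\ref{lem:gauss-conv} gives $G_{Q_2^2}=G_{Q_1^2}*G_P$. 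I will first check that the double integral
\[
\int_{\Rn}\int_{\Rn}\big(\I^su(x,y+z)+\I^su(x,y-z)\big)G_{Q_1^2}(y)G_P(z)\,dy\,dz
\]
equals $2\int\I^su(x,w)G_{Q_2^2}(w)\,dw$. This follows from Fubini and the evenness of $G_P$ once absolute integrability is known.

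Next, for a.e. $(y,z)$ all the quantities $\I^su(x,y\pm z)$, $\I^su(x,y)$ and $\I^su(x,z)$ are finite, by Remark~\ref{remark.I.s.in.L1} and homogeneity. So I can apply \eqref{eq:second.diff.Is} pointwise and solve for the defect:
\[
\mathcal R^su(x;y,z)=\I^su(x,y+z)+\I^su(x,y-z)-2\I^su(x,y)-2\I^su(x,z).
\]
This is finite a.e. Its integrability against $G_{Q_1^2}(y)G_P(z)$ follows once each term on the right is integrable against that product measure. For the terms $\I^su(x,y)$ and $\I^su(x,z)$ this is clear, because the other Gaussian factor has total mass one and the $y$- or $z$-marginal integrals are $\kappa_{n,s}$ times spherical integrals of $|\I^su(x,Q_1\theta)|$ or $|\I^su(x,P^{1/2}\theta)|$. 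Integrating the identity then yields
\[
2\!\int\!\I^su\,G_{Q_2^2}-2\!\int\!\I^su\,G_{Q_1^2}=2\!\int\!\I^su\,G_P+\iint\mathcal R^s\,G_{Q_1^2}G_P .
\]
Multiplying by $\frac{C_{n,s}}{8(1-s)\kappa_{n,s}}$ gives \eqref{eq:additivity.defect}, with the constant in \eqref{def.averaged.Rs}.

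The main obstacle is integrability, i.e.\ showing that $\I^su(x,\cdot)\in L^1(\mathbb S^{n-1})$ in absolute value and not just that the relevant integrals are $>-\infty$.
\begin{itemize}
\item For the positive part, \eqref{bound.I.s} gives $\I^su(x,z)\le K|z|^{2s}$ a.e., so $(\I^su)^+$ is dominated by $K|z|^{2s}$, which is integrable against any Gaussian.
\item For the negative part, I use \eqref{eq:finiteness.three}. Since $\int\I^su\,G_{Q^2}>-\infty$ and the positive part is integrable, the negative part is integrable against $G_{Q^2}$ for $Q=Q_1,Q_2,P^{1/2}$.
\item For the shifted terms $\I^su(x,y\pm z)$, the change of variables $w=y\pm z$ reduces $\iint|\I^su(x,y\pm z)|G_{Q_1^2}(y)G_P(z)$ to $\int|\I^su(x,w)|G_{Q_2^2}(w)\,dw<\infty$, by Tonelli and the convolution identity.
\end{itemize}
With absolute integrability in hand, Fubini is justified in every step above and the identity follows.

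Finally, the hypothesis that $u$ is touched from above by a $C^2$-function at $x$ is needed so that each $\mathcal L^s_Q u(x)$ is defined classically, as in Lemma~\ref{lem:classicalsense}, and so that Lemma~\ref{lem:newrep} applies without principal-value issues.
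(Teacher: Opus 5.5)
Your proposal is correct and follows the paper's proof. Like the paper, you integrate the pointwise identity \eqref{eq:second.diff.Is} against $G_{Q_1^2}(y)G_{Q_2^2-Q_1^2}(z)$, handle the shifted terms with Lemma~\ref{lem:gauss-conv} and the evenness of the Gaussian, and handle the remaining terms with Corollary~\ref{lem:gauss-sphere} and Lemma~\ref{lem:newrep}. Your integrability discussion is more explicit than the paper's one-line appeal to $\I^su(x,\cdot)\in L^1(\mathbb S^{n-1})$, but one point is out of order: \eqref{bound.I.s} is itself stated under the assumption $\I^su(x,\cdot)\in L^1(\mathbb S^{n-1})$, which is what you are trying to establish, so citing it for the positive part is circular. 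Instead, take the integrability of the positive part directly from the touching from above together with $u\in L^1_{\omega_s}(\Rn)$; this is exactly what makes each $\mathcal L^s_Qu(x)$ classically defined, and \eqref{eq:finiteness.three} then gives the negative part.
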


\begin{proof}
As in the proof of Proposition~\ref{prop:tech-lem}, the hypotheses imply that $\I^su(x,\cdot)\in L^1(\mathbb S^{n-1})$ and finite a.e. by homogeneity. In particular, $\I^su(x,\cdot)$ is integrable against $G_{Q_2^2}$, $G_{Q_1^2}$ and $G_{Q_2^2-Q_1^2}$.
Since $G_{Q_2^2-Q_1^2}$ is even, Lemma~\ref{lem:gauss-conv}, Corollary~\ref{lem:gauss-sphere}, and the representation formula  of Lemma~\ref{lem:newrep}, yield
\[
\begin{split}
\int_{\Rn}\int_{\Rn}\I^su(x,y\pm z)\,&G_{Q_1^2}(y)G_{Q_2^2-Q_1^2}(z)\,dy\,dz
=\int_{\Rn}\I^su(x,w)\,G_{Q_2^2}(w)\,dw\\
&=\kappa_{n,s}\int_{\mathbb S^{n-1}}\I^su(x,Q_2\theta)\,d\H^{n-1}_\theta
=
\frac{4(1-s)\,\kappa_{n,s}}{C_{n,s}}
\,\mathcal L^s_{Q_2} u(x).
\end{split}
\]
Similarly, using that each Gaussian has unit mass, Corollary~\ref{lem:gauss-sphere}, and the representation formula, we obtain
\[
\int_{\Rn}\int_{\Rn}\I^su(x,y)\,G_{Q_1^2}(y)G_{Q_2^2-Q_1^2}(z)\,dy\,dz
=
\int_{\Rn}\I^su(x,y)\,G_{Q_1^2}(y)\,dy
=\frac{4(1-s)\,\kappa_{n,s}}{C_{n,s}}\mathcal L^s_{Q_1} u(x),
\qquad
\]
and
\[
\int_{\Rn}\int_{\Rn}\I^su(x,z)\,G_{Q_1^2}(y)G_{Q_2^2-Q_1^2}(z)\,dy\,dz
=\frac{4(1-s)\,\kappa_{n,s}}{C_{n,s}}\mathcal L^s_{(Q_2^2-Q_1^2)^{1/2}} u(x).
\]
Hence, all four terms in \eqref{eq:second.diff.Is} are integrable against $G_{Q_1^2}(y)G_{Q_2^2-Q_1^2}(z)$, so $\mathcal R^su(x;y,z)$ is too, and integrating
\eqref{eq:second.diff.Is} gives
\eqref{eq:additivity.defect}.
\end{proof}

We can recover the comparison estimate \eqref{comparison.estimate} from \eqref{eq:additivity.defect} as follows.
By \eqref{eq:second.diff.Is}, condition \eqref{NSC} reads $2\,\I^su(x,z)+\mathcal R^su(x;y,z)\leq2K|z|^{2s}$. Averaging this inequality against $G_{Q_1^2}(y)\,G_{Q_2^2-Q_1^2}(z)$, and using \eqref{def.averaged.Rs}, by the computations in the proof of Lemma~\ref{lem:averaged.parallelogram}, and \eqref{lem:comp.eq3} with $Q=(Q_2^2-Q_1^2)^{1/2}$, 
\[
\mathcal L^s_{(Q_2^2-Q_1^2)^{1/2}}u(x)+\overline{\mathcal R^s}_{Q_1,Q_2}u(x)
\leq K\cdot\frac{\omega_{n-1}}{4 n^s}\cdot\frac{C_{n,s}}{1-s}\cdot\big(\tr(Q_2^2-Q_1^2)\big)^s
=c_{K,s}\big(\tr(Q_2^2-Q_1^2)\big)^s.
\]

\medskip

\section{The counterexample} \label{sec:counterexample}

In the following, we show that condition \eqref{weaker-theta} is too weak to obtain the conclusion of Theorem~\ref{thm:main}, which motivates our main hypothesis \eqref{NSC}.
The idea is to construct a function whose nonlocal Hessian form is bounded on the sphere, but vanishes on $\{y_1=0\}$. Then, one can find a family of degenerate matrices in $\mathcal A_2$ that contract the $e_1$-direction and expand the transverse directions, while keeping $\mathcal L_A^s u(0)$ bounded. This shows that directional control alone is not enough, and that the transversal information in \eqref{NSC} is essential.

\begin{prop}\label{prop:counterexample}
There exists $u\in C_c(\mathbb{R}^n)$, given precisely in \eqref{eq:realization} below, such that $u(x)$ can be touched from above at $x=0$ by a $C^2$-function, and it satisfies
\begin{enumerate}[(i)]\itemsep3pt
\item $\K^su(0)=\eta_0$, for some $\eta_0>0$;
\item $0\le \Theta^s u(0,e)\le 1,$  for all  $e\in\mathbb{S}^{n-1}$.
\end{enumerate}
However, for every $\Lambda\ge 1$,
\[
\inf_{A\in\A_2\cap\mathcal N(\Lambda)}\mathcal L^s_Au(0)>\inf_{A\in\A_2}\mathcal L^s_Au(0)=\K^su(0).
\]
Moreover, the key estimate of Theorem~\ref{thm:keyest} fails at $x=0$, i.e., there exists a degenerating matrix family $\{A_\lambda\}\subset \A_2$ such that
$\mathcal L^s_{A_\lambda}u(0)$ remains bounded as $\|A_\lambda\|_F\to\infty$.
\end{prop}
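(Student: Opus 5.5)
The plan is to build $u$ by prescribing its nonlocal Hessian form at the origin rather than $u$ itself. I will look for a $2s$-homogeneous, even profile $\J(y)=\I^su(0,y)=|y|^{2s}\Theta(y/|y|)$ with $0\le\Theta\le1$ on $\mathbb S^{n-1}$ that vanishes on the hyperplane $\{y_1=0\}$, the model being $\Theta(e)=\psi(e_1)$ with $\psi\ge0$, $\psi(0)=0$. By Lemma~\ref{lem:newrep}, $\mathcal L^s_Au(0)=\frac{C_{n,s}}{4(1-s)}\int_{\mathbb S^{n-1}}\J(Ae)\,d\H^{n-1}_e$, so every claim of the proposition becomes a statement about the spherical averages $\Phi(A):=\int_{\mathbb S^{n-1}}\J(Ae)\,d\H^{n-1}_e$ over $A\in\A_2$. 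Property (ii) holds by construction. Property (i) and the failure of uniform ellipticity reduce to showing three things: $\inf_{\A_2}\Phi=:\Phi_\infty>0$; this infimum is approached along a degenerating family $A_\lambda$ with $\|A_\lambda\|_F\to\infty$; and $\Phi>\Phi_\infty$ on each compact set $\A_2\cap\mathcal N(\Lambda)$. The last point suffices because $\Phi$ is continuous there, exactly as in the attainment argument in the proof of Theorem~\ref{thm:main}. The final assertion, that $\mathcal L^s_{A_\lambda}u(0)$ stays bounded, then contradicts Theorem~\ref{thm:keyest} directly.

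For the degenerating family I will use Corollary~\ref{cor:degeneracies}(ii)(a), working with matrices diagonal in a basis containing $e_1$. Take $\mu_1\to a>0$ and $\mu_2,\dots,\mu_n\to\infty$, with the latter not all equal, since equal $\mu_j$ force $\mu_1\approx1/\mu\to0$. One solves \eqref{eq:classA2} for $\mu_n$, say, after fixing the spread among $\mu_2,\dots,\mu_n$. Then $A_\lambda e=(\lambda_1e_1,\,\text{large transverse part})$, so the direction of $A_\lambda e$ collapses onto $\{y_1=0\}$ except on a thin cap of directions. Computing $\Phi(A_\lambda)$ via the Gaussian representation \eqref{representation-L}, with independent Gaussian coordinates, the limit should be a finite positive number determined by $a$ and by the behaviour of $\psi$ near $0$. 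This is the place where the probability facts of the Appendix enter, as Gaussian moment identities.

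The second task is realizability: I must exhibit $u\in C_c(\Rn)$, touchable from above at $0$ by a $C^2$-function, whose one-dimensional fractional Laplacians at $0$ equal the chosen $\Theta$. I plan to take $u$ as a superposition of compactly supported profiles. For example, $u(x)=\int_{\mathbb S^{n-1}}g(\langle x,\theta\rangle)\chi(x)\,d\nu(\theta)$ with $g$ even and smooth near $0$, so that $\delta u(0,re)$ is explicit and $\Theta(e)$ becomes an average of $|\langle e,\theta\rangle|^{2s}$-type terms. The measure $\nu$, concentrated near $\pm e_1$, is then tuned so that $\Theta$ vanishes exactly on $e_1^\perp$; smoothness near $0$ gives the touching from above.

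The main obstacle is the first task, specifically making the infimum both positive and not attained. The naive choice $\Theta(e)=|e_1|^{2s}$ gives $\Phi(A)=c|Ae_1|^{2s}$, whose infimum over $\A_2$ is $0$ (take $e_1$ as the small eigenvector with $\lambda_1\to0$), so $\eta_0=0$. The profile therefore has to penalize degeneration in which $e_1$ is the contracted direction while still rewarding expansion transverse to $e_1$. I would first try $\Theta(e)=\min\{1,\,c|e_1|^{2s}+\epsilon|e'|^{2s}\mathbf 1_{\{|e_1|\ge\rho\}}\}$, with a cutoff keeping $\Theta=0$ only on a thin band. I would then check by explicit Gaussian computation that along the (ii)(a) families $\Phi$ decreases monotonically in $\lambda$ to a positive limit, and that the (i) degenerations, as well as tilted eigenframes, give larger values. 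If that comparison fails, I will adjust $\psi$ near $0$ so that the band contribution scales exactly like $\lambda_1^{2s}$ times the transverse growth rate, and balance the two constants.
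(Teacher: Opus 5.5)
There is a genuine gap. What you describe is a search strategy rather than a proof, and each concrete choice you make falls under one obstruction. If $\J(y)\le C|y_1|^{2s}$, then $\Phi(A)\le C\int_{\mathbb S^{n-1}}|\langle e,Ae_1\rangle|^{2s}\,d\H^{n-1}_e=C'|Ae_1|^{2s}$. The family of Corollary~\ref{cor:degeneracies}(i), with $e_1$ as the contracted eigenvector, then gives $\inf_{\A_2}\Phi=0$, i.e.\ $\eta_0=0$.

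Your choices all fall into this case:
\begin{itemize}
\item Your trial profile $\min\{1,c|e_1|^{2s}+\epsilon|e'|^{2s}\mathbf{1}_{\{|e_1|\ge\rho\}}\}$ satisfies $\J(y)\le\max\{c,\rho^{-2s}\}|y_1|^{2s}$, because where the indicator is active one has $|y|\le|y_1|/\rho$.
\item Your ridge-function superposition is built to give $\Theta(e)\approx\int|\langle e,\theta\rangle|^{2s}\,d\nu(\theta)$. A nonnegative average of this kind vanishes on $e_1^\perp$ only if $\nu$ is carried by $\pm e_1$, which returns the naive profile.
\item Your type (ii)(a) family forces the same bound. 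Take $\Theta(e)=\psi(|e_1|)$ with $\psi$ nondecreasing. On the band $\{|e_1|\ge\tfrac12,\ |e'|\ge\tfrac12\}$ one has $(A_\lambda e)_1=\lambda_1e_1$ with $\lambda_1\to\sqrt a$, while $|A_\lambda e|\approx\lambda_n$. Hence boundedness of $\Phi(A_\lambda)$ requires $\psi(t)\lesssim t^{2s}$ as $t\to0$.
\end{itemize}
So in your model class the (ii)(a) family can never be the minimizing one with a positive infimum. Separately, you have no mechanism for the strict bound $\Phi(A)>\Phi_\infty$ on \emph{all} of $\A_2$, including eigenframes that do not contain $e_1$. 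Computations along explicit families cannot give this.

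The missing idea is to use exactly the type (i) family you discarded and to balance exponents along it. Take $\J(y)=|y_1|^\beta|y|^{2s-\beta}$ and eigenvalues $(\lambda_1,\lambda,\dots,\lambda)$ with $\lambda_1\lambda\to1$. Then $\Phi\approx(\lambda_1\lambda)^\beta\lambda^{2s-2\beta}$, which has a finite positive limit precisely when $\beta=s$. This is why the paper prescribes $\I^su(0,y)=|y_1|^s|y|^s$, i.e.\ $\Theta^su(0,e)=|e_1|^s\in[0,1]$; note this is \emph{not} dominated by $|y_1|^{2s}$.

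The profile is realized directly by $u(y)=|y_1|^s|y|^s\chi(|y|)$, with $\chi(t)=t^2$ near $0$ and $4(1-s)\int_0^\infty\chi(t)\,t^{-1}\,dt=1$. Evenness gives $\delta u(0,ry)=2u(ry)$, the radial integral is scale invariant, and $u=O(|y|^{2s+2})$ near $0$ gives the touching from above.

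The lower bound over all of $\A_2$ comes from the Khatri--\v{S}id\'ak inequality (Lemma~\ref{lemma:KS-version}). It decouples $|\langle y,Ae_1\rangle|^s$ from $|Ay|^s$ under the Gaussian in \eqref{formula-F}. Then $|Ae_1|\ge\lambda_1$, $|Ay|>\lambda_2|Py|$ a.e.\ (with $P$ the projection onto $v_1^\perp$), and $\lambda_1\lambda_2\ge1$ from Lemma~\ref{lema1} give $F(A)>\lambda_1^s\lambda_2^sF_\infty\ge F_\infty$ for every $A\in\A_2$. Dominated convergence gives $F(A_\lambda)\to F_\infty$. Compactness of $\A_2\cap\mathcal N(\Lambda)$ together with continuity of $F$ gives the strict gap for each $\Lambda$.
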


\begin{rem}
Observe that for the function $u$ given in the Proposition \ref{prop:counterexample}, we have:
\begin{enumerate}
\item[{(a)}] 
By property \emph{(ii)}, $u$ is fractionally convex at $x=0$ in the sense of \cite{DelPezzo-Quaas-Rossi}, and satisfies the one-sided bound \eqref{weaker-theta} with $K=1$.

\item[(b)]  Hypothesis \eqref{NSC} fails at $x=0$ through the cross-direction term, i.e., for $y\perp e_1$, $z=te_1$, and $t>0$,
\begin{equation*}
\mathcal Q^su(0;y,z) =2\,t^s\,(|y|^2+t^2)^{s/2}\geq 2\,t^s|y|^s,
\end{equation*}
which is not bounded by $2K t^{2s}$ for any $K$, as $|y|\to\infty$.
\end{enumerate}
\end{rem}

The argument in Proposition \ref{prop:counterexample} is based on a probabilistic interpretation of the representation formula \eqref{representation-L}, i.e.,
\begin{equation}\label{formula:contraejemplo}
\mathcal L^s_Au(0)=
\frac{C_{n,s}}{4(1-s)\kappa_{n,s}}
\int_{\Rn}\I^su(0,Ay)G(y)\,dy=:\frac{C_{n,s}}{4(1-s)\kappa_{n,s}}F(A),
\end{equation}
where $G(y)$ is the Gaussian density in $\Rn$, given by
$$
G(y):=
(2\pi)^{-\frac{n}{2}}\, e^{-\frac{|y|^2}{2}}.
$$
More precisely, we will construct a function $u$ such that, at the origin, $\I^s u(0,y)$ is a product of two factors. Then, the  Gaussian correlation inequality allows us to separate them into the product of two integrals with an inequality in the right direction. In addition, Lemma \ref{lema1} provides uniform control from below for every $A\in \mathcal A_2$.

 In particular, writing the coordinates as $y=(y_1,y')\in\R\times\R^{n-1}$, we will choose 
\begin{equation*}
\I^s u(0,y)=|y_1|^s|y|^s,
\end{equation*}
and show that \begin{equation*}
F(A)>F_\infty:=\int_{\Rn}|y_1|^s|y'|^s G(y)\,dy,
\end{equation*}
for every $A\in\mathcal A_2$. This is done in Lemma \ref{lem:bound-below}.

The crucial observation here is that decoupling the first coordinate from the rest allows us to isolate the first direction in the integrals. Then, we can construct a suitable sequence of matrices $A_\lambda\in\mathcal A_2$  with eigenvalues $(\lambda_1,\lambda,\ldots,\lambda)$ with 
$\lambda_1\to 0$, $\lambda\to\infty$, that degenerates exactly as described in the first case of Corollary~\ref{cor:degeneracies}. Since quantities in the direction of $y_1$ degenerate to zero, we will be able to show that $F(A_\lambda)\to F_\infty$. Taking into account that $\|A_\lambda\|_F\to\infty$, it yields a  counterexample for the key estimate of Theorem~\ref{thm:keyest}. This is done in Lemma~\ref{lemma:counterexample-key-estimate} below.

\begin{lem}\label{lemma:u-Phi}
There exists $u\in C_c(\Rn)$ such that $u(x)$ can be touched from above at $x=0$ by a $C^2$-function, and
\begin{equation}\label{relation-Phi}
\I^s u(0,y)=|y_1|^s|y|^s.
\end{equation}
\end{lem}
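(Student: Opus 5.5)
The plan is to build $u$ as an even function that vanishes near the origin and has a prescribed angular profile, so that $\I^s u(0,\cdot)$ can be computed explicitly. For even $u$ with $u(0)=0$ we have $\delta u(0,y)=u(y)+u(-y)-2u(0)=2u(y)$. By Remark~(i) on the properties of $\I^s u$, this gives
\[
\I^s u(0,y)=4(1-s)\int_0^\infty \frac{u(ry)}{r^{1+2s}}\,dr .
\]
The idea is to separate variables in polar coordinates: an angular factor carrying $|e_1|^s$, and a radial bump carrying the normalization.

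\textbf{Construction.} Fix a nonnegative $\psi\in C_c^\infty((0,\infty))$, not identically zero, with support in $[a,b]$ for some $0<a<b$. Normalize it so that
\[
4(1-s)\int_0^\infty \frac{\psi(r)}{r^{1+2s}}\,dr=1 .
\]
This is possible since the integral is finite and positive. Define
\[
u(y):=\frac{|y_1|^s}{|y|^s}\,\psi(|y|)\quad\text{for } y\neq0,\qquad u(0):=0.
\]
Then $u$ has the following properties.
\begin{itemize}
\item $u$ is even.
\item $u$ vanishes on $B_a(0)$ and outside $B_b(0)$.
\item $u$ is continuous, because $|y_1|^s|y|^{-s}$ is continuous away from the origin and the cutoff kills a neighborhood of $0$.
\end{itemize}
Hence $u\in C_c(\Rn)$, and in particular $u\in L^1_{\omega_s}(\Rn)$.

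\textbf{Touching from above.} Take $N=B_a(0)$ and the $C^2$ function $\varphi(y)=|y|^2$. Then $\varphi(0)=u(0)=0$, and $\varphi(y)>0=u(y)$ for $y\in N\setminus\{0\}$. So $u$ is touched from above at $0$ in the sense of Definition~\ref{def:viscosity}.

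\textbf{Computing $\I^s u(0,\cdot)$.} For $e\in\mathbb S^{n-1}$ we have $u(re)=|e_1|^s\psi(r)$ for every $r>0$. Therefore
\[
\Theta^s u(0,e)=4(1-s)|e_1|^s\int_0^\infty\frac{\psi(r)}{r^{1+2s}}\,dr=|e_1|^s .
\]
By the homogeneous extension \eqref{def.alternate.I.s}, for $y\neq0$,
\[
\I^s u(0,y)=|y|^{2s}\,\Big|\frac{y_1}{|y|}\Big|^s=|y_1|^s|y|^s ,
\]
and both sides vanish at $y=0$. This is exactly \eqref{relation-Phi}.

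None of these steps poses a real obstacle. The one point needing care is that the radial cutoff vanishes near the origin: this is what guarantees both the continuity of $u$ at $0$ (despite the discontinuous angular factor) and the trivial $C^2$ touching there. The convergence of the $r$-integral is automatic because $\psi$ has compact support away from $0$.
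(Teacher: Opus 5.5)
Your proof is correct and is essentially the paper's construction: the paper takes $u(y)=|y_1|^s|y|^s\chi(|y|)$ with $4(1-s)\int_0^\infty \chi(t)\,t^{-1}\,dt=1$, which is your ansatz with $\psi(t)=t^{2s}\chi(t)$, and it computes $\I^s u(0,\cdot)$ by the same evenness and dilation argument. The only difference is that the paper prescribes $\chi(t)=t^2$ near $0$ (it touches with $|y|^2$ via $u\le |y|^{2s+2}$, and needs $\rho^2<\frac{1}{2(1-s)}$ so the normalization is achievable), whereas your profile vanishes near the origin, which makes continuity and touching immediate.
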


\begin{proof}
 Fix $\rho\in(0,1]$ with $\rho^2<\frac{1}{2(1-s)}$. Let $\chi\in C_c([0,\infty))$ be nonnegative, with $\chi(t)=t^2$ for
$0\leq t\leq\rho$, normalized so that
\begin{equation}\label{eq:chi.normalization}
4(1-s)\int_0^\infty\frac{\chi(t)}{t}\,dt=1 ,
\end{equation}
and define
\begin{equation}\label{eq:realization}
u(y):=|y_1|^s|y|^s\chi(|y|),\quad \text{ for } y\in\Rn.
\end{equation}
By construction, $u(0)=0$ and
$$
u(y)\leq |y|^{2s+2}\leq  |y|^2, \quad \text{ for all } |y|\leq \rho.
$$
Hence, the function $|y|^2$ touches $u(y)$ from above at $y=0$.

Since $u$ is even, $\delta u(0,ry)=2u(ry)$, and therefore,
\[
\I^su(0,y)=2(1-s)\int_0^\infty\frac{\delta u(0,ry)}{r^{1+2s}}\,dr
=4(1-s)|y_1|^s|y|^s\int_0^\infty\frac{\chi(r|y|)}{r}\,dr=|y_1|^s|y|^s,
\]
and this concludes the proof of the lemma.
\end{proof}

As mentioned above, the proof of Proposition \ref{prop:counterexample} relies on the  representation formula from \eqref{representation-L},  
\begin{equation}\label{formula-F}
\begin{split}
\mathcal L^s_Au(0)&=
\frac{C_{n,s}}{4(1-s)\kappa_{n,s}} \int_{\Rn}\I^su(0,Ay)G(y)\,dy\\
&=\frac{C_{n,s}}{4(1-s)\kappa_{n,s}} \int_{\Rn}|(Ay)_1|^s|Ay|^s G(y)\,dy=:\frac{C_{n,s}}{4(1-s)\kappa_{n,s}}F(A)
\end{split}
\end{equation}
for all $A\in\S^n_+$.

We denote by  $G^{(k)}$ the standard Gaussian density on $\R^k$ and define
\begin{equation}\label{def.Linfty}
F_\infty:=\int_{\Rn}|y_1|^s|y'|^s G(y)\,dy=\int_{\R}|t|^sG^{(1)}(t)\,dt\cdot\int_{\R^{n-1}}|y'|^sG^{(n-1)}(y')\,dy'.
\end{equation}

\begin{lem}\label{lem:bound-below}
For every $A\in\A_2$, it holds that $F(A)>F_\infty$. 
\end{lem}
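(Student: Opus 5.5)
Since $F$ is invariant under orthogonal changes of variables only partially, we first reduce: write $A$ in the basis where we keep the distinguished coordinate $y_1$. The plan is to bound $F(A)=\int |(Ay)_1|^s|Ay|^s G(y)\,dy$ from below in two steps: first replace $|Ay|^s$ by $|(Ay)'|^s$, where $(Ay)'$ denotes the last $n-1$ coordinates of $Ay$; this gives $F(A)\ge \int |(Ay)_1|^s|(Ay)'|^s G(y)\,dy$, with strict inequality because $|(Ay)_1|>0$ on a set of positive measure (the first row of $A$ is nonzero since $A$ is invertible). Second, the vector $w=Ay$ is a centered Gaussian with covariance $A^2$, so the remaining integral equals $\mathbb E\big[|w_1|^s|w'|^s\big]$. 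We then want to show $\mathbb E[|w_1|^s|w'|^s]\ge \mathbb E|w_1|^s\cdot\mathbb E|w'|^s$, which is exactly what the Gaussian correlation inequality (stated in Appendix~\ref{appendix}) provides: the functions $w\mapsto|w_1|^s$ and $w\mapsto |w'|^s$ are even and quasi-convex (their sublevel sets are symmetric convex sets: slabs and cylinders), so by the layer-cake formula $\mathbb E[f g]=\int\!\!\int \mathbb P(f>a,g>b)\,da\,db$ and $\mathbb P(f>a,g>b)=1-\mathbb P(f\le a)-\mathbb P(g\le b)+\mathbb P(f\le a,g\le b)\ge \mathbb P(f>a)\mathbb P(g>b)$ by Royen's inequality applied to the two symmetric convex sets.

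Next we evaluate the factors. We have $\mathbb E|w_1|^s=(A^2)_{11}^{s/2}\int|t|^sG^{(1)}(t)\,dt$, and $w'$ is a centered Gaussian on $\R^{n-1}$ with covariance $M'$, the lower $(n-1)\times(n-1)$ block of $A^2$, so $\mathbb E|w'|^s=\int |M'^{1/2}z|^s G^{(n-1)}(z)\,dz$. Here we use Lemma~\ref{lema1}: by the Cauchy interlacing theorem, $(A^2)_{11}\ge\mu_1$ and $M'\ge\mu_2 I_{n-1}$ (the smallest eigenvalue of the compression is at least $\mu_1$, but the eigenvalues of $M'$ interlace so $\lambda_{\min}(M')\ge \mu_1$; this is insufficient alone). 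I would instead combine them: by interlacing, $(A^2)_{11}\ge \mu_1$ and $\lambda_{\min}(M')\ge\mu_1$, while also the product $(A^2)_{11}\cdot\lambda_{\min}(M')\ge \mu_1\mu_2\ge 1$ should follow from $\det$-type interlacing of the $2\times 2$ structure. Then $\mathbb E|w_1|^s\,\mathbb E|w'|^s\ge \big((A^2)_{11}\lambda_{\min}(M')\big)^{s/2}F_\infty\ge (\mu_1\mu_2)^{s/2}F_\infty\ge F_\infty$, using Lemma~\ref{lema1} in the form $\mu_1\mu_2=\lambda_1^2\lambda_2^2\ge1$.

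The main obstacle is precisely the inequality $(A^2)_{11}\,\lambda_{\min}(M')\ge\mu_1\mu_2$. I would try to prove it via min–max: $\lambda_{\min}(M')=\min_{v\perp e_1,|v|=1}\langle A^2v,v\rangle$ and $(A^2)_{11}=\langle A^2e_1,e_1\rangle$; the restriction of $A^2$ to the plane spanned by $e_1$ and the minimizing $v$ is a $2\times2$ positive matrix whose diagonal entries have product at least its determinant, which in turn is at least $\mu_1\mu_2$ by interlacing of the compression onto a 2-dimensional subspace. This gives the bound, and the strict inequality from the first step yields $F(A)>F_\infty$.
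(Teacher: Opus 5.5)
Your argument is correct. Its core matches the paper's: split $F(A)$ into a product with the Khatri--\v{S}id\'ak inequality (slab versus symmetric convex set), then close with $\lambda_1\lambda_2\geq 1$ from Lemma~\ref{lema1}.

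The two proofs differ in where a direction is discarded.
\begin{itemize}
\item The paper applies the inequality in the $y$-variable to $|\langle y,Ae_1\rangle|^s$ and to the \emph{full} norm $|Ay|^s$, whose sublevel sets are ellipsoids. Only afterwards does it drop the component along the eigenvector $v_1$. This decouples the eigenvalue bounds into $|Ae_1|\geq\lambda_1$ and $|Ay|\geq\lambda_2|Py|$, and strictness comes from that last step.
\item You drop the fixed coordinate $(Ay)_1$ \emph{before} correlating; this is where your strictness comes from. As a result, the second factor only sees the compression $M'$ of $A^2$ to $e_1^\perp$, whose smallest eigenvalue can be as small as $\mu_1$. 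That is why you need the coupled bound $(A^2)_{11}\,\lambda_{\min}(M')\geq\mu_1\mu_2$.
\end{itemize}
Your proof of that coupled bound is valid. You take the $2\times 2$ compression $C$ of $A^2$ onto $\operatorname{span}\{e_1,v\}$, use Hadamard's inequality $\det C\leq C_{11}C_{22}$, and use Cauchy interlacing $\nu_1\geq\mu_1$, $\nu_2\geq\mu_2$. So the argument closes, but it needs an extra matrix lemma that the paper's ordering avoids.

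On the other hand, your layer-cake step uses inclusion--exclusion, $\mathbb P(f>a,g>b)\geq\mathbb P(f>a)\,\mathbb P(g>b)$. This handles nonnegative functions directly, without the truncation used in Lemma~\ref{lemma:KS-version}.

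One correction: delete the assertion $M'\geq\mu_2 I_{n-1}$ in your second paragraph, because it is false. Take $A^2=\operatorname{diag}(\mu_2,\mu_1,\mu_3,\dots,\mu_n)$ with $\mu_1<\mu_2$, which is still in $\A_2$ since membership depends only on the eigenvalues. Then $\lambda_{\min}(M')=\mu_1<\mu_2$. Only $\lambda_{\min}(M')\geq\mu_1$ holds in general, which is exactly why you need the product bound. Also tighten the hedge ``should follow from $\det$-type interlacing'' in the second paragraph: your final paragraph already supplies the complete argument.
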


\begin{proof}
Let $A\in\A_2$ with eigenvalues $\lambda_1\leq\lambda_2\leq\cdots\leq\lambda_n$ and orthonormal eigenvectors $v_1,\ldots,v_n$. Write $a:=Ae_1$, so that
$|(Ay)_1|^s |Ay|^s=|\langle y,a\rangle|^s\,|Ay|^s$. The first factor is even with sublevel sets that are symmetric slabs orthogonal to $a$, and the second is
even with ellipsoidal sublevel sets. By Lemma~\ref{lemma:KS-version} in Appendix~\ref{appendix}, and the rotation invariance of $G$,
\begin{equation}\label{eq:GCI.applied}
F(A)\geq\int_{\mathbb R^n}|\langle y,a\rangle|^sG(y)\,dy\cdot\int_{\mathbb R^n}|Ay|^s G(y)\,dy
=|a|^s\int_{\mathbb R^n}|y_1|^s G(y)\,dy\cdot\int_{\mathbb R^n}|Ay|^s G(y)\,dy .
\end{equation}
Since $|a|=|Ae_1|\geq\lambda_1$, it remains to bound the last integral from below. Let $P$ be the orthogonal projection onto $v_1^\perp$, which commutes with $A$. Then, for every $y$ with $\langle y,v_1\rangle\neq0$,
\[
|Ay|^2=\lambda_1^2\langle y,v_1\rangle^2+|APy|^2
>|APy|^2\geq\lambda_2^2\,|Py|^2 ,
\]
and the exceptional hyperplane has measure zero. Rotating $v_1$ to $e_1$,
$\int|Py|^s G(y)\,dy=\int|y'|^s G(y)\,dy$, so that
$$\int_{\mathbb R^n}|Ay|^s G(y)\,dy>\lambda_2^s\int_{\mathbb R^n}|y'|^s G(y)\,dy.$$ Therefore, by
\eqref{eq:GCI.applied}, 
\[
F(A)>\lambda_1^s\lambda_2^s\int_{\mathbb R^n}|y_1|^s G(y)\,dy\int_{\mathbb R^n}|y'|^s G(y)\,dy.
\]
Then, decomposing $G(y)=G^{(1)}(y_1)G^{(n-1)}(y')$, by Fubini, we reach
\begin{equation*}
F(A)>\lambda_1^s\lambda_2^sF_\infty\geq F_\infty,
\end{equation*}
where in the last inequality we have used Lemma~\ref{lema1}.
\end{proof}

\begin{lem}\label{lemma:counterexample-key-estimate}
Given $\lambda>0$, let
$A_\lambda$ be the diagonal matrix with eigenvalues $(\lambda_1,\lambda,\ldots,\lambda)$, where $\lambda_1$ is chosen in terms of $\lambda$ so that $A_\lambda\in\A_2$. Then
$F(A_\lambda)\to F_\infty$ and $\|A_\lambda\|_F\to\infty$ as $\lambda\to\infty$.
\end{lem}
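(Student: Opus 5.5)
First I will determine $\lambda_1$ explicitly. With $\mu=(\lambda_1^2,\lambda^2,\dots,\lambda^2)$, the characterization \eqref{eq:classA2} of Lemma~\ref{lem:char} gives $\sigma_1(\mu)=\lambda_1^2+(n-1)\lambda^2$ and $|\mu|^2=\lambda_1^4+(n-1)\lambda^4$. Hence
\[
\frac{(\lambda_1^2+(n-1)\lambda^2)^2}{n-1}-\lambda_1^4-(n-1)\lambda^4=2
\iff 2\lambda_1^2\lambda^2-\frac{n-2}{n-1}\lambda_1^4=2 .
\]
This is a quadratic equation in $\lambda_1^2$. For $\lambda$ large, I take the small root, $\lambda_1^2=\frac{n-1}{n-2}\big(\lambda^2-\sqrt{\lambda^4-\frac{2(n-2)}{n-1}}\big)$, which satisfies $\lambda_1^2\lambda^2\to1$. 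So $\lambda_1\sim\lambda^{-1}\to0$, consistent with Lemma~\ref{lema1} and Corollary~\ref{cor:degeneracies}(i). Then $\|A_\lambda\|_F^2=\lambda_1^2+(n-1)\lambda^2\to\infty$, which disposes of the second claim.

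For the convergence of $F$, $A_\lambda$ is diagonal, so $(A_\lambda y)_1=\lambda_1y_1$ and $|A_\lambda y|^2=\lambda_1^2y_1^2+\lambda^2|y'|^2$. Therefore
\[
F(A_\lambda)=\int_{\Rn}\lambda_1^s|y_1|^s\big(\lambda_1^2y_1^2+\lambda^2|y'|^2\big)^{s/2}G(y)\,dy
=\int_{\Rn}|y_1|^s\big(\lambda_1^4y_1^2+(\lambda_1\lambda)^2|y'|^2\big)^{s/2}G(y)\,dy .
\]
As $\lambda\to\infty$ we have $\lambda_1^4\to0$ and $\lambda_1\lambda\to1$. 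The integrand therefore converges pointwise to $|y_1|^s|y'|^s$. It is dominated, for $\lambda$ large (say $\lambda_1\le1$ and $\lambda_1\lambda\le2$), by $|y_1|^s(y_1^2+4|y'|^2)^{s/2}G(y)\le 2^s|y_1|^s|y|^sG(y)$, which is integrable. Dominated convergence then gives $F(A_\lambda)\to F_\infty$, with $F_\infty$ as in \eqref{def.Linfty}.

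The only step needing care is the choice of root and the verification that $\lambda_1\lambda\to1$, rather than merely staying bounded. This follows from rationalizing, $\lambda^2-\sqrt{\lambda^4-c}=c/(\lambda^2+\sqrt{\lambda^4-c})$ with $c=\frac{2(n-2)}{n-1}$, which yields $\lambda_1^2\lambda^2\to\frac{n-1}{n-2}\cdot\frac c2=1$. This is exactly the limit that makes the $F$-limit coincide with $F_\infty$. I also need $A_\lambda\in\S^n_+$, which holds since $\lambda_1>0$.
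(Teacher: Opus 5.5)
Your proof is correct and follows the paper's argument. Both reduce \eqref{eq:classA2} to $2\mu_1\mu-\frac{n-2}{n-1}\mu_1^2=2$, deduce $\lambda_1\lambda\to1$ and $\lambda_1/\lambda\to0$, and conclude by dominated convergence with a dominating function of the form $C|y_1|^s|y|^sG$. Two details differ slightly. You select the small root explicitly and rationalize to show $\lambda_1^2\lambda^2\to1$; the paper leaves this choice implicit, and it matters, since the large root would make $F(A_\lambda)$ blow up. Your factorization $\big(\lambda_1^4y_1^2+(\lambda_1\lambda)^2|y'|^2\big)^{s/2}$ is only a harmless variant of the paper's step of pulling out $(\lambda_1\lambda)^s$.
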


\begin{proof} 
The eigenvalues $\mu=(\mu_1,\mu,\ldots,\mu)$ of
$A_\lambda^2$, with $\mu_i=\lambda_i^2$, satisfy \eqref{eq:classA2}, which reduces
to
\[
2\mu_1\mu-\frac{n-2}{n-1}\,\mu_1^2=2 .
\]
Hence, as $\lambda\to\infty$, we have $\mu_1\to0$ and $\mu_1\mu\to1$ as $\mu\to\infty$, i.e.,
$\lambda_1\lambda\to1$ and $\lambda_1/\lambda\to0$, which is the first degeneracy described in Corollary \ref{cor:degeneracies}. Then
\[
F(A_\lambda)=\int_{\mathbb R^n}|\lambda_1y_1|^s\big(\lambda_1^2y_1^2+\lambda^2|y'|^2\big)^{s/2}G(y)\,dy
=(\lambda_1\lambda)^s\int_{\mathbb R^n}|y_1|^s\Big(|y'|^2+\frac{\lambda_1^2}{\lambda^2}\,y_1^2\Big)^{s/2}G(y)\,dy
\longrightarrow F_\infty
\]
by dominated convergence, since the integrand is bounded by $|y_1|^s|y|^sG(y)$
for $\lambda\geq\lambda_1$. Clearly, $\|A_\lambda\|_F^2\geq\lambda^2\to\infty$.
\end{proof}

Finally, we give the proof of the counterexample.

\begin{proof}[Proof of Proposition~8.1]
Let $u$ be the function constructed in Lemma~\ref{lemma:u-Phi}. 
By \eqref{formula-F}, for $A\in \S^n_+,$
\[
    \mathcal L_A^s u(0) = \frac{C_{n,s}}{4(1-s)\kappa_{n,s}} F(A).
\]
By Lemmas~\ref{lem:bound-below} and \ref{lemma:counterexample-key-estimate}, we have
\[
    F(A)>F_\infty,
    \quad\text{for all } A\in \A_2,
\]
and there is a sequence $\{A_\lambda\}\subset\A_2$ such that
\[
    F(A_\lambda)\longrightarrow F_\infty, \quad \text{ as } \lambda \to \infty.
\]
Hence,
\[
    \mathcal K^s u(0)= \inf_{A\in\mathcal{A}_2} \mathcal L_A^s u(0)
    =\frac{C_{n,s}}{4(1-s)\kappa_{n,s}}\,F_\infty =:\eta_0>0.
\]

Now, fix $\Lambda\ge 1$. Since $\mathcal{A}_2\cap\mathcal{N}(\Lambda)$ is compact and $F$ is continuous, Lemma~\ref{lem:bound-below} implies
\[
    \min_{A\in\mathcal{A}_2\cap\mathcal{N}(\Lambda)}F(A)>F_\infty.
\]
It follows that
\[
    \inf_{A\in\mathcal{A}_2\cap\mathcal{N}(\Lambda)} \mathcal L_A^s u(0)
    > \inf_{A\in\mathcal{A}_2} \mathcal L_A^s u(0) = \mathcal K^s u(0),
\]
which proves that the conclusion of Theorem~1.2 fails at the origin for
every $\Lambda\ge1$.

Finally, Lemma~8.5 also gives
\[
    \mathcal L_{A_\lambda}^s u(0) \longrightarrow \frac{C_{n,s}}{4(1-s)\kappa_{n,s}}\,F_\infty,
    \quad \text{ as } \|A_\lambda\|_F\to \infty.
\]
Therefore, the key estimate of Theorem~6.1 does not hold for the function $u$.
\end{proof}

\appendix

\section{Some results in probability}\label{appendix}

In the proof of Lemma~\ref{lem:bound-below}, we used an inequality due to Khatri \cite{Khatri} and \v{S}id\'ak \cite{Sidak} (who obtained this result independently), which is a particular case of the Gaussian correlation inequality for symmetric convex sets. The general inequality was conjectured in \cite{correlation-conjecture} and proved in \cite{Royen} (see the modern presentation in \cite{Latala-Matlak}).

Given a unit vector $\nu\in\mathbb S^{n-1}$ and $t\in[0,\infty]$, the \emph{symmetric slab} of width $t$ orthogonal to $\nu$ is
\[
S_\nu(t):=\{y\in\Rn:\ |\langle y,\nu\rangle|\leq t\} ,
\]
with the convention $S_\nu(\infty)=\Rn$. Slabs are convex and symmetric with respect to the origin.

\begin{lem} If
$K\subset\Rn$ is closed, convex, and symmetric with respect to the origin, and $S=\{y\in\Rn:|\langle y,\nu\rangle|\leq t\}$ is a symmetric slab, then
\begin{equation}\label{eq:sidak}
\int_{K\cap S}G\,dy\ \geq\ \int_KG\,dy\int_SG\,dy .
\end{equation}
\end{lem}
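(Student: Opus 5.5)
The lemma is the Khatri--\v{S}id\'ak inequality, and I plan to prove it by reducing it to a one-parameter monotonicity statement. First I rotate so that $\nu=e_1$; this uses the rotation invariance of $G$ and keeps $K$ closed, convex and symmetric. Writing $y=(y_1,y')$, I set
\[
f(t):=\frac{\int_{K\cap S_{e_1}(t)}G\,dy}{\int_{S_{e_1}(t)}G\,dy},\qquad t\in(0,\infty].
\]
Then $f(\infty)=\int_K G\,dy$, and \eqref{eq:sidak} is exactly $f(t)\geq f(\infty)$. So it suffices to show that $f$ is nonincreasing in $t$. The cases $t=0$ (a null set, trivial) and $t=\infty$ (equality) are immediate, so I take $0<t<\infty$.

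Next I use Fubini with $G(y)=G^{(1)}(y_1)G^{(n-1)}(y')$. I define the section function
\[
h(y_1):=\int_{\{y':(y_1,y')\in K\}}G^{(n-1)}(y')\,dy'.
\]
Then $f(t)$ is the average of $h$ over $[-t,t]$ with respect to the probability density proportional to $G^{(1)}$ restricted there. The key step is that $h$ is even and nonincreasing in $|y_1|$. Evenness follows from the symmetry of $K$ together with the evenness of $G^{(n-1)}$. For monotonicity, I would invoke Anderson's theorem. For a fixed $y_1$, the section $K_{y_1}$ contains $\tfrac12(K_{y_1}+K_{-y_1})=\tfrac12(K_{y_1}-K_{y_1})$ by convexity, and this set is a symmetric convex set. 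More directly, for $0\le a<b$ the section $K_a$ contains the convex combination $\tfrac{b+a}{2b}K_b+\tfrac{b-a}{2b}K_{-b}$, where $K_{-b}=-K_b$. Then log-concavity of the Gaussian, through Pr\'ekopa--Leindler or Anderson's inequality, gives $h(a)\ge h(b)$. Applying Anderson's theorem to the translate $K_b$ and its reflection is the cleanest route: $\gamma\big(\lambda A+(1-\lambda)(-A)\big)\ge\gamma(A)$ for the Gaussian measure $\gamma=G^{(n-1)}dy'$, which follows from log-concavity because $\gamma(-A)=\gamma(A)$.

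Finally, once $h$ is even and nonincreasing in $|y_1|$, the averages of $h$ over the symmetric intervals $[-t,t]$ with respect to the weight $G^{(1)}$ are nonincreasing in $t$. I would verify this by differentiating:
\[
f'(t)\propto h(t)\int_{-t}^tG^{(1)}-\int_{-t}^t hG^{(1)}\le0,
\]
since $h\ge h(t)$ on $[-t,t]$. Letting $t\to\infty$ by monotone convergence yields $f(t)\ge f(\infty)$, which is \eqref{eq:sidak}.

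I expect the main obstacle to be the monotonicity of the sections $h$, which is Anderson's theorem. I would prove it from Pr\'ekopa--Leindler applied to the log-concave Gaussian density, noting that the closedness of $K$ ensures measurability and that all sections are compact convex sets.
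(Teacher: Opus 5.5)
The paper gives no proof of this lemma. It quotes it from Khatri and \v{S}id\'ak and notes that it is a special case of Royen's Gaussian correlation inequality. Your argument is therefore a genuinely different, self-contained route, and it is correct; it is a standard conditioning proof of the Khatri--\v{S}id\'ak inequality.

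After rotating $\nu$ to $e_1$ and applying Fubini, everything reduces to one fact. Writing $\gamma_{n-1}=G^{(n-1)}dy'$, the section function $h(y_1)=\gamma_{n-1}(K_{y_1})$ must be even and nonincreasing in $|y_1|$. You get this from the inclusion $K_a\supseteq\lambda K_b+(1-\lambda)(-K_b)$ with $\lambda=\frac{b+a}{2b}$, together with Pr\'ekopa--Leindler and $\gamma_{n-1}(-K_b)=\gamma_{n-1}(K_b)$. The rest is the elementary fact that averages of such an $h$ over $[-t,t]$ decrease as $t$ grows.

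Compared with the citation, your route costs a page but proves more. It shows that your ratio $f(t)$ is nonincreasing in $t$, not just $f(t)\ge f(\infty)$. It also uses only the product structure of $G$ and the symmetry and log-concavity of the transversal factor. The citation buys brevity, but it invokes the full correlation inequality for two arbitrary symmetric convex sets, which is far deeper than what the slab case and its functional version in the appendix need.

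A few statements should be cleaned up, though none breaks the proof.

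\emph{False inclusion.} The sentence asserting $K_{y_1}\supseteq\frac12(K_{y_1}+K_{-y_1})$ is false. Convexity only gives $\frac12(K_{y_1}+K_{-y_1})\subseteq K_0$, and a thin tilted strip is a counterexample. Delete it and keep only the inclusion for $0\le a<b$.

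\emph{Naming of the key step.} That step is Pr\'ekopa--Leindler, not Anderson's theorem for translates, since $K_b$ is not a translate of a fixed symmetric set. Apply it to $\mathbf 1_{K_a}G^{(n-1)}$, $\mathbf 1_{K_b}G^{(n-1)}$, $\mathbf 1_{-K_b}G^{(n-1)}$; this sidesteps any measurability question about Minkowski sums. The case $K_b=\emptyset$ is trivial.

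\emph{Compactness.} The sections need not be compact, since $K$ may be unbounded. Closed and convex suffices.

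\emph{Monotonicity of $f$.} $h$ is only monotone, so instead of differentiating $f$, compare directly. For $0<t<t'$ one has $h\le h(t)\le f(t)$ on $\{t<|y_1|\le t'\}$, hence
\[
\int_{K\cap S_{e_1}(t')}G\,dy\le f(t)\int_{S_{e_1}(t)}G\,dy+f(t)\int_{S_{e_1}(t')\setminus S_{e_1}(t)}G\,dy=f(t)\int_{S_{e_1}(t')}G\,dy,
\]
that is, $f(t')\le f(t)$.
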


In probabilistic language, this tells us that
$$\mathbb P(K\cap S)\geq  \mathbb P(K) \mathbb P(S),$$ where the probability is calculated with respect to the Gaussian measure. Equivalently,
$$\mathbb P(K|S)\geq \mathbb P(K),$$
whenever $\mathbb P(S)>0$, that is, knowing that you are in the slab $S$ only increases the probability of being in the convex set $K$, regardless of the relative orientation of the two sets, see Figure~\ref{fig:sidak}.

\begin{figure}[t]
\centering
\begin{tikzpicture}[scale=1]

\begin{scope}
  \clip (-4.6,-2.9) rectangle (4.6,2.9);

  \fill[blue,opacity=0.13] (-1.2,-2.9) rectangle (1.2,2.9);

  \begin{scope}
    \fill[red,opacity=0.14] (0,0) ellipse [x radius=2.6, y radius=1.45, rotate=22];
  \end{scope}

  \begin{scope}
    \clip (-1.2,-2.9) rectangle (1.2,2.9);
    \fill[purple,opacity=0.34] (0,0) ellipse [x radius=2.6, y radius=1.45, rotate=22];
  \end{scope}

  \foreach \r in {0.85,1.6,2.35}{
    \draw[gray!75,dashed,line width=0.35pt] (0,0) circle (\r);
  }

  \draw[black,line width=0.7pt] (-1.2,-2.9) -- (-1.2,2.9);
  \draw[black,line width=0.7pt] (1.2,-2.9) -- (1.2,2.9);
  \draw[black,line width=0.7pt] (0,0) ellipse [x radius=2.6, y radius=1.45, rotate=22];
  \draw[black,line width=0.3pt] (-3,0) -- (3,0);
  \draw[black,line width=0.3pt] (0,-4) -- (0,4);
\end{scope}

\fill (0,0) circle (1.4pt);

\node[font=\footnotesize] at (-0.9,2.5){$S$};

\node[font=\footnotesize] at (1.6,1.2){$K$};

\node[anchor=west,align=left,font=\footnotesize] at (0,0.5){$K \cap S$};

\draw[black,line width=0.3pt,->] (-2.5,0.9) -- (-1.5,0.6);
\node[anchor=east,align=right,font=\footnotesize] at (-2.5,1){Gaussian level sets};


\end{tikzpicture}

\caption{The Khatri--\v{S}id\'ak inequality \eqref{eq:sidak}.}
\label{fig:sidak}
\end{figure}

The following is the functional form of \eqref{eq:sidak}. Although it is not the most general statement, it suffices for our purposes.

\begin{lem}\label{lemma:KS-version}
Let $f_1,f_2:\Rn\to[0,\infty)$. Assume that $f_1(y)=\phi(|\langle y,\nu\rangle|)$ for some nondecreasing
continuous $\phi:[0,\infty)\to[0,\infty)$ and $\nu\in\mathbb S^{n-1}$, and that  $f_2$ is even and continuous, and the sublevel set $\{f_2\leq t\}$ is convex for every $t\geq0$. Then
\begin{equation}\label{eq:KS.functional}
\int_{\Rn}f_1f_2G\,dy\ \geq\ \int_{\Rn}f_1G\,dy\int_{\Rn}f_2G\,dy,
\end{equation}
both sides being possibly infinite.
\end{lem}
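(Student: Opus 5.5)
This follows from the Khatri--\v{S}id\'ak inequality \eqref{eq:sidak} via the layer-cake (Fubini) representation of both functions. Since $f_1,f_2\ge0$, we write
\[
f_1(y)=\int_0^\infty \mathbf 1_{\{f_1>a\}}(y)\,da,\qquad f_2(y)=\int_0^\infty \mathbf 1_{\{f_2>b\}}(y)\,db .
\]
By Tonelli (everything is nonnegative, so infinite values are allowed),
\[
\int f_1f_2G\,dy=\int_0^\infty\!\!\int_0^\infty \mathbb P\big(\{f_1>a\}\cap\{f_2>b\}\big)\,da\,db,
\]
and similarly $\int f_1G\,dy\int f_2 G\,dy=\int_0^\infty\!\int_0^\infty \mathbb P(\{f_1>a\})\,\mathbb P(\{f_2>b\})\,da\,db$, where $\mathbb P$ is the standard Gaussian measure. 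It therefore suffices to prove, for every $a,b\ge0$,
\[
\mathbb P\big(\{f_1>a\}\cap\{f_2>b\}\big)\ \ge\ \mathbb P(\{f_1>a\})\,\mathbb P(\{f_2>b\}).
\]

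We pass to complements. Set $S_a:=\{f_1\le a\}$ and $K_b:=\{f_2\le b\}$. Because $\phi$ is nondecreasing and continuous, $S_a=\{|\langle y,\nu\rangle|\le t_a\}$ with $t_a:=\sup\{t\ge 0:\phi(t)\le a\}\in[0,\infty]$; this is a closed symmetric slab (possibly empty if $a<\phi(0)$, or all of $\Rn$). By hypothesis $K_b$ is convex, it is closed by continuity of $f_2$, and it is symmetric because $f_2$ is even. Inclusion--exclusion gives
\[
\mathbb P(S_a^c\cap K_b^c)=1-\mathbb P(S_a)-\mathbb P(K_b)+\mathbb P(S_a\cap K_b),
\]
while $\mathbb P(S_a^c)\mathbb P(K_b^c)=1-\mathbb P(S_a)-\mathbb P(K_b)+\mathbb P(S_a)\mathbb P(K_b)$. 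The desired inequality is thus exactly $\mathbb P(S_a\cap K_b)\ge \mathbb P(S_a)\mathbb P(K_b)$, which is \eqref{eq:sidak}.

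The only points requiring care are the degenerate cases. If $S_a$ is empty or equals $\Rn$, or if $K_b$ is empty, the inequality holds trivially (with equality). Otherwise \eqref{eq:sidak} applies directly. The infinite cases are already covered by Tonelli, since all integrands are nonnegative. The substantive ingredient is the \v{S}id\'ak inequality itself, which we take as given. The main step is recognizing that the product inequality for superlevel sets is equivalent, through complementation, to the correlation inequality for the corresponding sublevel sets.
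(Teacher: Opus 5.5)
Your proof is correct. It uses the same two ingredients as the paper, the layer-cake/Tonelli representation and the Khatri--\v{S}id\'ak inequality \eqref{eq:sidak}, but you place the complementation step differently.

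The paper truncates, $g_i=\min(f_i,M)$, and applies the layer-cake formula to $h_i=M-g_i$. The superlevel sets of $h_i$ are the strict sublevel sets $\{f_i<M-t\}$, which are open slabs and open symmetric convex sets, so it has to pass to closures. It then expands $\int h_1h_2G$, cancels the $M$-terms (this is why boundedness is needed), and lets $M\to\infty$ by monotone convergence.

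You apply the layer-cake formula to $f_1,f_2$ directly and flip at the level of sets, through the identity $\mathbb P(S^c\cap K^c)-\mathbb P(S^c)\,\mathbb P(K^c)=\mathbb P(S\cap K)-\mathbb P(S)\,\mathbb P(K)$. Every term there is a probability and hence finite. This gains you several things:
\begin{itemize}
\item the truncation, the cancellation and the limit become unnecessary;
\item Tonelli handles the infinite cases in one stroke;
\item because you use the non-strict sublevel sets $\{f_i\le a\}$, which are already closed by continuity, the closure step also disappears.
\end{itemize}

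What the paper's route buys is mainly framing. It is stated as a covariance inequality for bounded functions, in line with the Hoeffding remark that follows it. Its closure argument, that boundaries of convex sets are Lebesgue-null, also makes visible that continuity of $f_2$ and of $\phi$ is not really needed. The same observation would let you drop those hypotheses in your argument too.
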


\begin{proof}
Fix $M>0$ and set $g_i:=\min(f_i,M)$, $h_i:=M-g_i\in[0,M]$. For $t\in(0,M)$, we have $\{h_i>t\}=\{f_i<M-t\}$, while $\{h_i>t\}=\emptyset$ for $t\geq M$. Since $\phi$ is nondecreasing, $\{h_1>t\}$ is a symmetric slab orthogonal to $\nu$. Moreover, $\{h_2>t\}$ is convex, as it is an increasing union of the convex sets $\{f_2\leq M-t-\frac1k\}$, and symmetric because $f_2$ is even. Replacing these sets by their closures does not change the relevant Gaussian measures, since the boundary of a convex set has zero Lebesgue measure. Therefore, \eqref{eq:sidak} applies to the closures.

By the layer-cake formula and Tonelli, since  all integrands are nonnegative,
\begin{equation}\label{layer-cake}
\begin{split}
\int_{\Rn} h_1h_2\,G\,dy
&=\int_0^M\int_0^M\Big(\int_{\{h_1>t\}\cap\{h_2>\tau\}}G\Big)dt\,d\tau\\
&\geq\int_0^M\int_0^M\Big(\int_{\{h_1>t\}}G\Big)\Big(\int_{\{h_2>\tau\}}G\Big)dt\,d\tau\\
&=\int_{\Rn} h_1G\int_{\Rn} h_2G .
\end{split}
\end{equation}
Substituting $h_i=M-g_i$ and using $\int_{\Rn}G\,dy=1$, all terms involving $M$ are finite (as $0\leq g_i\leq M$) and cancel, leaving
\[
\int_{\Rn} g_1g_2\,G\,dy\ \geq\ \int_{\Rn} g_1G\,dy\int_{\Rn} g_2G\,dy .
\]
Finally $g_i\uparrow f_i$ pointwise as $M\uparrow\infty$, and since $g_1,g_2\geq0$ are both nondecreasing in $M$, also $g_1g_2\uparrow f_1f_2$.
By monotone convergence in each of the three integrals, we obtain \eqref{eq:KS.functional}.
\end{proof}

\begin{rem}
Since $G\,dy$ is a probability measure, claim \eqref{eq:KS.functional} is equivalent to $\mathrm{Cov}_G(f_1,f_2)\geq0$. Thus, the previous lemma states that two nonnegative functions whose sublevel sets are symmetric slabs and symmetric convex sets, respectively, are nonnegatively correlated with respect to the Gaussian measure. The layer-cake formula that we use in  \eqref{layer-cake} is a classical identity by Hoeffding \cite{Hoeffding-aleman} (translated in \cite{Hoeffding}, see also the more modern \cite{Lehmann}), who showed that for $X,Y$  real random variables with $\mathbb E[X],\mathbb E[Y],\mathbb E[XY]<\infty$, then
\begin{equation}\label{eq:hoeffding}
\mathrm{Cov}(X,Y)=\int_{\R}\int_{\R}\Big[F_{X,Y}(t,t')-F_X(t)F_Y(t')\Big]\,dt\,dt',
\end{equation}
and the double integral converges absolutely. Here, we have denoted by $F_{X,Y}$ their joint distribution function and by $F_X,F_Y$ the
marginal ones. 
\end{rem}

\subsection*{Disclaimer on the use of AI tools}
The authors used \texttt{Claude Fable 5} to assist with the probabilistic interpretation and the construction of the counterexample in Section~\ref{sec:counterexample}. All such contributions were independently verified by the authors, who take full responsibility for the contents of the paper.

\bigskip
\subsection*{Acknowledgements}

F.C., M.G., and M.S.C. acknowledge financial support from the grant PID2023-150166NB-I00 (MICIU/AEI/10.13039/501100011033).

M.G. also acknowledges financial support from the ``Severo Ochoa Programme for Centers of Excellence in R\&D'' (CEX2019-000904-S) and RED2024-153842-T.

M.S.C. also acknowledges financial support from the Juan de la Cierva grant JDC2023-050724-I (MICIU/AEI/10.13039/501100011033 and FSE+).

\medskip


\bibliographystyle{plain}

\end{document}